\newcounter{braid}
\newcounter{strands}
\def\cross{%
  \@ifnextchar^{\message{Got sup}\cross@sup}{\cross@sub}}
\def\cross@sup^#1_#2{\render@cross{#2}{#1}}
\def\cross@sub_#1{\@ifnextchar^{\cross@@sub{#1}}{\render@cross{#1}{1}}}
\def\cross@@sub#1^#2{\render@cross{#1}{#2}}
\def\render@cross#1#2{
  \def\strand{#1}
  \def\crossing{#2}
  \pgfmathsetmacro{\cross@y}{-\value{braid}*\braid@h}
  \pgfmathtruncatemacro{\nextstrand}{#1+1}
  \foreach \thread in {1,...,\value{strands}}
  {
    \pgfmathsetmacro{\strand@x}{\thread * \braid@w}
    \ifnum\thread=\strand
    \pgfmathsetmacro{\over@x}{\strand * \braid@w + .5*(1 - \crossing) * \braid@w}
    \pgfmathsetmacro{\under@x}{\strand * \braid@w + .5*(1 + \crossing) * \braid@w}
    \draw[braid] \pgfkeysvalueof{/tikz/braid start} +(\under@x pt,\cross@y pt) to[out=-90,in=90] +(\over@x pt,\cross@y pt -\braid@h);
    \draw[braid] \pgfkeysvalueof{/tikz/braid start} +(\over@x pt,\cross@y pt) to[out=-90,in=90] +(\under@x pt,\cross@y pt -\braid@h);
    \else
    \ifnum\thread=\nextstrand
    \else
     \draw[braid] \pgfkeysvalueof{/tikz/braid start} ++(\strand@x pt,\cross@y pt) -- ++(0,-\braid@h);
    \fi
   \fi
  }
  \stepcounter{braid}
}
\tikzset{braid/.style={double=\pgfkeysvalueof{/tikz/braid colour},double distance=1pt,line width=2pt,white}}
\newcommand{\braid}[2][]{%
  \begingroup
  \pgfkeys{/tikz/strands=2}
  \tikzset{#1}
  \pgfkeysgetvalue{/tikz/braid width}{\braid@w}
  \pgfkeysgetvalue{/tikz/braid height}{\braid@h}
  \setcounter{braid}{0}
  \let\sigma=\cross
  #2
  \endgroup
}
\newtheorem{theorem}{Theorem}[section]
\newtheorem{lemma}[theorem]{Lemma}
\newtheorem{conjecture}[theorem]{Conjecture}
\newtheorem{corollary}[theorem]{Corollary}
\theoremstyle{definition}
\newtheorem{definition}[theorem]{Definition}
\newtheorem{remark}[theorem]{Remark}
\newtheorem{example}[theorem]{Example}
\def\Z{\mathbb{Z}}
\def\Pi{\mathbb{P}^{\infty}}
\def\Zpk{\mathbb{Z}/p^{k}}
\def\Zpk1{\mathbb{Z}/p^{k-1}}
\def\sl2{\widetilde{SL_{2}(\Z)}}
\DeclareMathOperator{\ord}{ord}
\DeclareMathOperator{\Gal}{Gal}
\DeclareMathOperator{\Frob}{Frob}
\DeclareMathOperator{\Res}{Res}
\DeclareMathOperator{\rank}{rank}
\DeclareMathOperator{\Sel}{Sel}
\DeclareMathOperator{\GL}{GL}
\DeclareFontFamily{U}{wncy}{}
\DeclareFontShape{U}{wncy}{m}{n}{<->wncyr10}{}
\DeclareSymbolFont{mcy}{U}{wncy}{m}{n}
\DeclareMathSymbol{\Sh}{\mathord}{mcy}{"58}
\DeclareSymbolFont{cyrillic}{T2A}{cmr}{m}{n}
\DeclareMathSymbol{\Sha}{\mathalpha}{cyrillic}{216}
\setlist[enumerate]{leftmargin=*}
\title{Congruences between Heegner points and quadratic twists of elliptic curves}
\begin{document}
\author[Daniel Kriz]{Daniel Kriz}\email{dkriz@princeton.edu}
\address{Department of Mathematics, Princeton University, Fine Hall, Washington Rd, Princeton, NJ 08544}
\author[Chao Li]{Chao Li}\email{chaoli@math.columbia.edu} 
\address{Department of Mathematics, Columbia University, 2990 Broadway,
 New York, NY 10027}

\subjclass[2010]{11G05 (primary), 11G40 (secondary).}
\keywords{elliptic curves, Heegner points, Goldfeld's conjecture, Birch and Swinnerton-Dyer conjecture}

\date{\today}

\maketitle

\begin{abstract}
 We establish a congruence formula between $p$-adic logarithms of Heegner points for two elliptic curves with the same mod $p$ Galois representation. As a first application, we use the congruence formula when $p=2$ to explicitly construct many quadratic twists of analytic rank zero (resp. one) for a wide class of elliptic curves $E$. We show that the number of twists of $E$ up to twisting discriminant $X$ of analytic rank zero (resp. one) is $\gg X/\log^{5/6}X$, improving the current best general bound towards Goldfeld's conjecture due to Ono--Skinner (resp. Perelli--Pomykala). We also prove the 2-part of the Birch and Swinnerton-Dyer conjecture for many rank zero and rank one twists of $E$, which was only recently established for specific CM elliptic curves $E$.
\end{abstract}


\section{Introduction}

\subsection{Goldfeld's conjecture} Let $E$ be an elliptic curve over $\mathbb{Q}$. We denote by $r_\mathrm{an}(E)$ its analytic rank. By the theorem of Gross--Zagier and Kolyvagin, the rank part of the Birch and Swinnerton-Dyer conjecture holds whenever $r_\mathrm{an}(E)\in\{0,1\}$. One can ask the following natural question: how is $r_\mathrm{an}(E)$ distributed when $E$ varies in families? The simplest (1-parameter) family is given by the quadratic twists family of a given curve $E$. For a fundamental discriminant $d$, we denote by $E^{(d)}$ the quadratic twist of $E$ by $\mathbb{Q}(\sqrt{d})$. The celebrated conjecture of Goldfeld \cite{Goldfeld1979} asserts that $r_\mathrm{an}(E^{(d)})$ tends to be as low as possible (compatible with the sign of the function equation). Namely in the quadratic twists family $\{E^{(d)}\}$, $r_\mathrm{an}$ should be 0 (resp. 1) for $50\%$ of $d$'s. Although $r_\mathrm{an}\ge2$ occurs infinitely often, its occurrence should be sparse and accounts for only $0\%$ of $d$'s. More precisely, 

\begin{conjecture}[Goldfeld]\label{conj:fullgoldfeld} Let $$N_r(E, X)=\{|d|<X: r_\mathrm{an}(E^{(d)})=r\}.$$ Then 
  for $r\in\{0,1\}$, $$N_r(E,X)\sim \frac{1}{2} \sum_{|d|<X}1,\quad X\rightarrow \infty.$$ Here $d$ runs over all fundamental discriminants.
\end{conjecture}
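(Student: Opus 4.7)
The plan is to combine three ingredients: (i) equidistribution of the root number $\varepsilon(E^{(d)})\in\{\pm1\}$ over fundamental discriminants, which splits the family in half between potential rank-$0$ and rank-$1$ twists; (ii) a translation of $L(E^{(d)},1)$ (resp.\ its derivative) into arithmetic objects amenable to counting; and (iii) the paper's congruence formula at $p=2$ to transport non-vanishing results along congruences of mod-$2$ Galois representations. By Kolyvagin's theorem it is enough to prove that $L(E^{(d)},1)\neq 0$ for $100\%$ of fundamental $d$ with $\varepsilon(E^{(d)})=+1$, and that the Heegner point on $E^{(d)}$ (or equivalently $L'(E^{(d)},1)$) is non-torsion for $100\%$ of fundamental $d$ with $\varepsilon(E^{(d)})=-1$.

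For the rank-$0$ half, I would invoke Waldspurger's formula (in the refined form of Kohnen--Zagier) to identify $L(E^{(d)},1)$, up to explicit Euler factors, with the square of the $|d|$-th Fourier coefficient of a weight $3/2$ modular form $g_E$ in Kohnen's plus space. The conjecture then reduces to the analytic statement that $c_{g_E}(|d|)\neq 0$ for $100\%$ of squarefree $|d|$ in the correct sign class. For the rank-$1$ half, I would run the parallel strategy with the Gross--Zagier formula, reducing non-vanishing of $L'(E^{(d)},1)$ to non-vanishing of a Heegner divisor class, which can again be related (via the Gross--Kohnen--Zagier theorem) to Fourier coefficients of a half-integral weight form, this time of weight $3/2$ attached to $E$ via the Shimura lift on the imaginary side.

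The paper's congruence formula enters as follows: given any elliptic curve $E'$ with $\overline{\rho}_{E,2}\cong\overline{\rho}_{E',2}$ for which a large set of twists of prescribed analytic rank is already known, the $2$-adic logarithm congruence propagates non-vanishing of Heegner points from $E'$ to $E$ along the common congruence classes of $d$. Iterating this and choosing $E'$ optimally (e.g.\ a CM curve where Waldspurger-type formulas and explicit theta decompositions are sharpest) allows one to leverage any quantitative input on $E'$ into quantitative input on $E$. Together with the analytic machinery of \cite{Ono-Skinner}, Perelli--Pomykala, and subconvexity for $\mathrm{GL}_2\times\mathrm{GL}_1$ twisted $L$-functions, one obtains, at least in principle, a positive proportion statement.

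The main obstacle — and the reason Goldfeld's conjecture remains open in the form stated — is the passage from a positive-density lower bound $\gg X/\log^{c}X$ (or even $\gg X$) to the exact density $1/2$. No known technique, including the congruence formula of this paper, produces an \emph{asymptotic} for the number of fundamental $d<X$ with $c_{g_E}(|d|)\neq 0$; all current methods bound the set of vanishings from above only by a quantity of the same order as $\sum_{|d|<X}1$ itself. To complete the plan, one would need either a non-trivial upper bound on $\#\{|d|<X:r_{\mathrm{an}}(E^{(d)})\geq 2\}$ of the form $o(X)$ — presumably via an asymptotic for moments of the $2$-Selmer rank in the spirit of Smith's work, upgraded from positive proportion to density one — or a direct analytic zero-density estimate for the family of $L$-functions $\{L(E^{(d)},s)\}$ ruling out central vanishing for all but a density-zero set of $d$. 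I expect that the congruence formula can feed into Smith-type arguments by controlling the mod $2$ behaviour of Heegner points class-by-class, but converting this into the full asymptotic remains, in my view, the essential unresolved step.
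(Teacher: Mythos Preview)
The statement you were asked to prove is not a theorem in the paper; it is \emph{Goldfeld's conjecture}, stated as Conjecture~\ref{conj:fullgoldfeld} and explicitly described as ``widely open: we do not yet know a single example $E$ for which Conjecture~\ref{conj:fullgoldfeld} is valid.'' The paper contains no proof of it, nor does it claim one. The results of the paper (Theorem~\ref{thm:goldfeld}) give only a lower bound of order $X/\log^{5/6}X$ for $N_r(E,X)$ under Assumption~(\ref{eq:star}), which is far from the full asymptotic $\sim \tfrac{1}{2}\sum_{|d|<X}1$.

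Your proposal is candid about this: you correctly identify that the congruence formula, Waldspurger/Gross--Zagier/Gross--Kohnen--Zagier, and the analytic input of Ono--Skinner and Perelli--Pomykala together yield at best positive-density or $X/\log^c X$ lower bounds, and that the passage to density $1/2$ would require either an $o(X)$ upper bound on twists of rank $\ge 2$ or a zero-density estimate for the family. That diagnosis is accurate, but it means your write-up is not a proof --- it is a discussion of why no proof currently exists. In particular, step~(iii) cannot do what you suggest: the paper's congruence formula transports non-vanishing only along a thin set of discriminants (those in $\mathcal{N}$, of density zero among all fundamental discriminants), so it cannot by itself upgrade any input to a density-one statement. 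There is no gap to repair here; the statement is genuinely open.
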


Goldfeld's conjecture is widely open: we do not yet know a single example $E$ for which Conjecture \ref{conj:fullgoldfeld} is valid. One can instead consider the following weaker version (replacing 50\% by any positive proportion):

\begin{conjecture}[Weak Goldfeld]\label{conj:weakgoldfeld}
  For $r\in \{0,1\}$, $N_r(E,X)\gg X$.
\end{conjecture}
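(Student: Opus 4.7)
The plan is to push the congruence-of-Heegner-points method of this paper to a positive-proportion statement. Fix a ``donor'' elliptic curve $E_0$ over $\Q$ together with an imaginary quadratic field $K$ satisfying the Heegner hypothesis, chosen so that the Heegner point $P_{E_0,K}$ has nonzero $2$-adic logarithm. The paper's main congruence will then ensure that any quadratic twist $E_0^{(d)}$ whose mod $2$ Galois representation matches that of a suitable auxiliary curve has a corresponding Heegner point with nonzero $2$-adic logarithm, and Gross--Zagier together with Kolyvagin will place $r_\mathrm{an}(E_0^{(d)}) \in \{0,1\}$ according to the prescribed global root number.

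With this congruence engine in place, the task reduces to counting fundamental discriminants $|d| < X$ such that (i) $E_0^{(d)}$ is admissible for the congruence (a finite set of local conditions on $d$ at primes of bad reduction), (ii) an appropriate Heegner-hypothesis variant holds for a companion imaginary quadratic field, and (iii) the global root number of $E_0^{(d)}$ has the prescribed sign. All three conditions are cut out by a union of congruence classes modulo a fixed integer together with a splitting condition at finitely many primes, so among fundamental discriminants the set of admissible $d$ with $|d| < X$ is already $\gg X$ by elementary density.

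The hardest step, and the true obstruction separating this sketch from the $\gg X/\log^{5/6} X$ bound actually proved in the paper, is to make the congruence input applicable to a positive proportion of admissible $d$, not only to those where one can produce a suitable auxiliary curve. The congruence is pointwise; in practice the auxiliary curve is constructed by an almost-prime twist of $E_0$, and it is precisely this prime-restriction on an accompanying discriminant that forces a logarithmic loss in the Chebotarev count. Removing it would require either enlarging the pool of donor curves so that almost every admissible $d$ is covered by at least one congruence, or importing a Smith-type distribution result for $2$-Selmer groups in twist families and grafting it onto the $p$-adic logarithm non-vanishing criterion of this paper. Both enhancements demand input strictly beyond the congruence formula, which is consistent with the fact that Conjecture \ref{conj:weakgoldfeld} in the stated generality remains open; the most one can honestly propose here is a conditional argument reducing Weak Goldfeld to such a distribution/covering statement.
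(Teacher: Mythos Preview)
The statement you are addressing is a \emph{conjecture}, not a theorem, and the paper does not prove it. The paper explicitly records Weak Goldfeld as open in general (see the remark immediately following the conjecture) and only establishes the weaker bound $N_r(E,X)\gg X/\log^{5/6}X$ (or $X/\log^{2/3}X$) under Assumption~(\ref{eq:star}) in Theorem~\ref{thm:goldfeld}. There is therefore no paper proof to compare against, and your closing paragraph correctly acknowledges that the conjecture remains open and that your sketch is at best a conditional reduction.

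That said, your description of the paper's mechanism is somewhat garbled. There is no separate ``donor'' curve $E_0$ or ``auxiliary curve constructed by an almost-prime twist'': the congruence of Theorem~\ref{thm:maincongruence} is applied directly between $E$ and its own quadratic twist $E^{(d)}$, using the trivial isomorphism $E[2]\cong E^{(d)}[2]$. The logarithmic loss does not come from a prime restriction on some companion discriminant; it comes from the requirement that every prime $\ell\mid d$ lie in the Chebotarev set $\mathcal{S}$ of Definition~\ref{def:setS} (split in $K$, with $\mathrm{Frob}_\ell$ of order $3$ on $E[2]$), so that the extra Euler factors in the congruence are $2$-adic units. This set has density $\alpha\in\{1/6,1/3\}$, and Ikehara's tauberian theorem then gives $\#\{d\in\mathcal{N}:|d|<X\}\sim c\,X/\log^{1-\alpha}X$. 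Your diagnosis that closing the gap to $\gg X$ would require input strictly beyond the congruence formula is correct, but the restriction on $d$ is intrinsic to keeping the Euler factors invertible mod $2$, not an artifact of an auxiliary construction.
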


\begin{remark}
  Heath-Brown (\cite[Thm. 4]{Heath-Brown2004}) proved Conjecture \ref{conj:weakgoldfeld}  \emph{conditional} on GRH. Recently, Smith \cite{Smith2017} has announced a proof (\emph{conditional} on BSD) of Conjecture \ref{conj:fullgoldfeld} for curves with full rational 2-torsion. In our recent work \cite{Kriz2016a}, we have proved Conjecture \ref{conj:weakgoldfeld} \emph{unconditionally} for any $E/\mathbb{Q}$ with a rational 3-isogeny.
\end{remark}


  When $r=0$, the best unconditional general result towards Goldfeld's conjecture is due to Ono--Skinner \cite{Ono1998}: they showed that for any elliptic curve $E/\mathbb{Q}$, $$N_0(E, X)\gg \frac{X}{\log X}.$$ When $E(\mathbb{Q})[2]=0$, Ono \cite{Ono2001} improved this result to $$N_0(E, X)\gg \frac{X}{\log^{1-\alpha} X}$$ for some $0<\alpha<1$ depending on $E$. When $r=1$, even less is known. The best general result is due to Perelli--Pomykala \cite{Perelli1997} using analytic methods: they showed that for any $\varepsilon>0$, $$N_1(E, X)\gg  X^{1-\varepsilon}.$$ Our main result improves both bounds, under a technical assumption on the 2-adic logarithm of the associated Heegner point on $E$.

Let us be more precise. Let $E/\mathbb{Q}$ be an elliptic curve of conductor $N$. Throughout this article, we will use $K=\mathbb{Q}(\sqrt{d_K})$ to denote an imaginary quadratic field of fundamental discriminant $d_K$ satisfying the \emph{Heegner hypothesis for $N$}:
\begin{center}
each prime factor $\ell$ of $N$ is split in $K$.
\end{center}
We denote by $P\in E(K)$ the corresponding Heegner point, defined up to sign and torsion with respect to a fixed modular parametrization $\pi_E: X_0(N)\rightarrow E$ (see \cite{Gross1984}). Let $$f(q)=\sum_{n=1}^\infty a_n(E) q^n\in S_2^\mathrm{new}(\Gamma_0(N))$$ be the normalized newform associated  to $E$. Let $\omega_E\in \Omega_{E/\mathbb{Q}}^1 := H^0(E/\mathbb{Q},\Omega^1)$ such that $$\pi_E^*(\omega_E)= f(q) \cdot dq/q.$$ We denote by $\log_{\omega_E}$ the formal logarithm associated to $\omega_E$. Notice $\omega_E$ may differ from the N\'{e}ron differential by a scalar when $E$ is not the optimal curve in its isogeny class.

Now we are ready to state our main result.

\begin{theorem}\label{thm:goldfeld}
  Suppose $E/\mathbb{Q}$ is an elliptic curve with $E(\mathbb{Q})[2]=0$. Suppose there exists an imaginary quadratic field $K$ be satisfying the Heegner hypothesis for $N$ such that
  \begin{equation}
    \label{eq:star}\
2\text{ splits in } K \text{ and }\quad \frac{|\tilde E^\mathrm{ns}(\mathbb{F}_2)|\cdot\log_{\omega_E}(P)}{2}\not\equiv0\pmod{2}.     \tag{$\bigstar$}
\end{equation}
Then for $r\in\{0,1\}$,  we have $$N_r(E, X)\gg
  \begin{cases}
   \displaystyle \frac{X}{\log^{5/6} X}, & \text{if }\Gal(\mathbb{Q}(E[2])/\mathbb{Q})\cong S_3, \\
   \displaystyle \frac{X}{\log^{2/3} X}, & \text{if }\Gal(\mathbb{Q}(E[2])/\mathbb{Q})\cong \mathbb{Z}/3 \mathbb{Z}.
  \end{cases}$$ 
\end{theorem}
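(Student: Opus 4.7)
The plan is to use the $2$-adic congruence formula established earlier in the paper to transport the non-vanishing assumption $(\bigstar)$ from $E$ to a positive-density family of quadratic twists $E^{(d)}$, and then to read off analytic ranks via Gross--Zagier--Kolyvagin. The starting observation is that the twisting character $\chi_d\colon G_\Q \to \{\pm 1\}$ becomes trivial modulo $2$, so $E[2]\cong E^{(d)}[2]$ as Galois modules for every fundamental discriminant $d$. Hence the congruence theorem with $p=2$ is applicable to every pair $(E,E^{(d)})$ as soon as we can set up compatible Heegner data with $2$ split.

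For each admissible $d$ I would construct an imaginary quadratic field $K_d$ satisfying the Heegner hypothesis for the conductor of $E^{(d)}$, with $2$ split in $K_d$, and apply the congruence formula to the pair $(E,E^{(d)})$ over $K_d$. Combined with $(\bigstar)$, this should force
\[
\frac{|\widetilde{E^{(d)}}^{\mathrm{ns}}(\F_2)|\cdot \log_{\omega_{E^{(d)}}}(P_{E^{(d)}})}{2}\not\equiv 0 \pmod{2},
\]
so the Heegner point $P_{E^{(d)}}$ is non-torsion in $E^{(d)}(K_d)$. Gross--Zagier then yields $r_\mathrm{an}(E^{(d)}/K_d)=1$, and the factorization $L(E^{(d)}/K_d,s)=L(E^{(d)},s)\,L(E^{(d\cdot d_{K_d})},s)$ together with the Heegner sign relation $\varepsilon(E^{(d)})\varepsilon(E^{(d\cdot d_{K_d})})=-1$ forces exactly one of $E^{(d)}$, $E^{(d\cdot d_{K_d})}$ to have analytic rank $0$ and the other analytic rank $1$ (invoking Kolyvagin to upgrade $\le 1$ to $=1$ on the rank-$1$ side). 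Consequently each admissible $d$ contributes simultaneously to $N_0(E,X)$ and $N_1(E,X)$, so a single counting bound will handle both cases.

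To count admissible $d$, I would restrict to $d$ supported on rational primes $\ell$ that split completely in $\Q(E[2])$ and that additionally satisfy finitely many fixed splitting conditions at $2$ and at the bad primes of $E$ (these are the conditions needed to produce $K_d$ and to keep the hypotheses of the congruence formula satisfied). The set of primes splitting completely in $\Q(E[2])$ has Chebotarev density $1/|\Gal(\Q(E[2])/\Q)|$, which is $1/6$ in the $S_3$ case and $1/3$ in the $\Z/3\Z$ case. The Wirsing / Landau--Selberg--Delange theorem then gives that the number of squarefree $|d|\le X$ supported on this set is $\gg X/(\log X)^{5/6}$ and $\gg X/(\log X)^{2/3}$ respectively, matching the claimed exponents exactly; the extra local conditions at $2$ and at $N$ only affect the implied constant.

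The main obstacle, as I see it, is the uniform construction of the auxiliary field $K_d$: for every $d$ in the sieved family it must split $2$, satisfy the Heegner hypothesis for the (twist-dependent) conductor $N(E^{(d)})$, and satisfy the technical hypotheses under which the congruence formula genuinely propagates the exact $2$-adic valuation of $|\widetilde{E}^{\mathrm{ns}}(\F_2)|\cdot \log_{\omega_E}(P)/2$ from $E$ to $E^{(d)}$. Carrying this out so that the local obstructions at the finitely many bad primes of $E^{(d)}$ do not eat into the Chebotarev density — and so that the pairing $d\leftrightarrow d\cdot d_{K_d}$ avoids systematic double counting — is the delicate technical point; the analytic rank extraction and the sieve count are then routine packaging.
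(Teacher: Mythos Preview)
Your overall architecture is right, but there is a genuine gap in the key arithmetic step, and your ``main obstacle'' is one the paper sidesteps entirely.

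\textbf{The wrong set of primes.} You propose to support $d$ on primes $\ell$ that \emph{split completely} in $\mathbb{Q}(E[2])$. This choice destroys the congruence argument. If $\Frob_\ell$ is trivial on $E[2]$ then $E[2]\subset E(\mathbb{F}_\ell)$, so $|\tilde E^{\mathrm{ns}}(\mathbb{F}_\ell)|=|E(\mathbb{F}_\ell)|$ is even. In the congruence formula of Theorem~\ref{thm:maincongruence} applied to $(E,E^{(d)})$ with $p=2$, each such $\ell\mid d$ contributes the Euler factor $|\tilde E^{\mathrm{ns}}(\mathbb{F}_\ell)|/\ell$ on the $E$ side; these factors are all $\equiv 0\pmod 2$, so the entire left side is $\equiv 0\pmod 2$ regardless of $(\bigstar)$, and you learn nothing about $\log_{\omega_{E^{(d)}}}P^{(d)}$. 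The paper instead takes $\ell$ with $\Frob_\ell$ of \emph{order three} on $E[2]$: an order-$3$ element of $\mathrm{GL}_2(\mathbb{F}_2)$ has trace $1$, so $a_\ell(E)$ is odd and $|E(\mathbb{F}_\ell)|$ is odd; on the $E^{(d)}$ side, $\ell^2\mid N(E^{(d)})$ gives $|\tilde E^{(d),\mathrm{ns}}(\mathbb{F}_\ell)|=\ell$, also odd. Thus all the extra Euler factors are $2$-adic units and $(\bigstar)$ genuinely transfers to $E^{(d)}$. Your density computation happens to produce the correct exponents $5/6$ and $2/3$, but by numerical coincidence: the paper's density $\alpha$ is (proportion of order-$3$ Frobenii)$\times$(split in $K$) $=\tfrac{1}{3}\cdot\tfrac{1}{2}$ or $\tfrac{2}{3}\cdot\tfrac{1}{2}$, not $1/|\Gal(\mathbb{Q}(E[2])/\mathbb{Q})|$.

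\textbf{No varying $K_d$ is needed.} Your self-identified main obstacle, constructing a field $K_d$ for each $d$, is avoided altogether in the paper. One fixes the single $K$ from $(\bigstar)$ once and for all, and then requires every prime $\ell\mid d$ to \emph{split in $K$} (this is the second condition defining $\mathcal{S}$). Since $E^{(d)}$ has conductor $Nd^2$ when $d\equiv 1\pmod 4$, and each prime dividing $Nd^2$ is already split in $K$, the same $K$ satisfies the Heegner hypothesis for every $E^{(d)}$ simultaneously. This eliminates the uniformity and double-counting worries you raise: with $K$ fixed, each $d\in\mathcal{N}$ yields one rank-$0$ and one rank-$1$ twist among $E^{(d)},E^{(d\cdot d_K)}$, and counting $\{d\in\mathcal{N}:|d|<X\}$ via Ikehara's tauberian theorem gives the stated lower bounds for both $N_0$ and $N_1$ at once.
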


\begin{remark}
  Assumption (\ref{eq:star}) imposes certain constraints on $E/\mathbb{Q}$ (e.g., its local Tamagawa numbers at odd primes are odd, see \S \ref{sec:strategy-proof}), but it is satisfied for a wide class of elliptic curves. See \S \ref{sec:examples} for examples and also Remark \ref{rem:assumptionstar} on the wide applicability of Theorem \ref{thm:goldfeld}.
\end{remark}

\begin{remark}\label{rem:MazurRubin}
  Mazur--Rubin \cite{Mazur2010} proved similar results for the number of twists of \emph{2-Selmer rank} $0,1$. Again we remark that it however does not have the same implication for analytic rank $r=0,1$ (or algebraic rank 1), since the $p$-converse to the theorem of Gross--Zagier and Kolyvagin for $p=2$ is not known.
\end{remark}

\begin{remark}
For certain elliptic curves with $E(\mathbb{Q})[2]=\mathbb{Z}/2 \mathbb{Z}$, the work of Coates--Y. Li--Tian--Zhai \cite{Coates2015} also improves the current bounds, using a generalization of the classical method of Heegner and Birch for prime twists.
\end{remark}

\subsection{Congruences between $p$-adic logarithms of Heegner points} The starting point of the proof of Theorem \ref{thm:goldfeld} is the simple observation that quadratic twists doesn't change the mod 2 Galois representations: $E[2]\cong E^{(d)}[2]$. More generally, suppose $p$ is a prime and $E, E'$ are  two elliptic curves with isomorphic semisimplified Galois representations $E[p^m]^{\mathrm{ss}}\cong E'[p^m]^{\mathrm{ss}}$ for some $m\ge1$, one expects that there should be a congruence mod $p^m$ between the special values (or derivatives) of the associated $L$-functions of $E$ and $E'$. It is usually rather subtle to formulate such congruence precisely. Instead, we work directly with the $p$-adic incarnation of the $L$-values -- the $p$-adic logarithm of Heegner points and we prove the following key congruence formula.


\begin{theorem}\label{thm:maincongruence} Let $E$ and $E'$ be two elliptic curves over $\mathbb{Q}$ of conductors $N$ and $N'$ respectively. Suppose $p$ is a prime such that there is an isomorphism of semisimplified  $G_\mathbb{Q}:=\Gal(\overline{\mathbb{Q}}/\mathbb{Q})$-representations $$E[p^m]^{\mathrm{ss}} \cong E'[p^m]^{\mathrm{ss}}$$ for some $m\ge1$. Let $K$ be an imaginary quadratic field satisfying the Heegner hypothesis for both $N$ and $N'$. Let $P \in E(K)$ and $P' \in E'(K)$ be the Heegner points. Assume $p$ is split in $K$. Then we have
$$\left(\prod_{\ell|pNN'/M}\frac{|\tilde{E}^{\mathrm{ns}}(\mathbb{F}_{\ell})|}{\ell}\right)\cdot \log_{\omega_E}P \equiv \pm\left(\prod_{\ell|pNN'/M}\frac{|\tilde{E}'^{,\mathrm{ns}}(\mathbb{F}_{\ell})|}{\ell}\right)\cdot\log_{\omega_{E'}}P' \pmod {p^m\mathcal{O}_{K_p}}.$$ Here 
$$M = \prod_{\ell|(N,N') \atop a_{\ell}(E)\equiv a_{\ell}(E')\pmod{p^m}}\ell^{\ord_{\ell}(NN')}.$$
\end{theorem}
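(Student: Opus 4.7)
The plan is to realize both $\log_{\omega_E}(P)$ and $\log_{\omega_{E'}}(P')$, after multiplication by the Euler-factor products that appear in the theorem, as values of an anticyclotomic $p$-adic $L$-function at the trivial character, and then to compare these $p$-adic $L$-values via a congruence between $p$-adic modular forms at a common level.

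Since $p$ splits in $K$, I would first invoke a $p$-adic Waldspurger formula of Bertolini--Darmon--Prasanna flavor (as generalized by Brako\v{c}evi\'c, Kobayashi, Liu--Zhang--Zhang, and by the first author) which expresses $\log_{\omega_E}(P)$ in the form
\begin{equation*}
\log_{\omega_E}(P) \;=\; \frac{|\tilde E^\mathrm{ns}(\mathbb{F}_p)|}{p}\cdot F_f(x_K),
\end{equation*}
where $F_f$ is the $p$-adic modular form obtained by applying Serre's $\theta^{-1}$ operator to the $p$-depletion of $f$, and $x_K$ is a Serre--Tate CM point on the Igusa tower attached to $K$. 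The prefactor $|\tilde E^\mathrm{ns}(\mathbb{F}_p)|/p$ arises directly from $p$-depletion (it is the local Euler factor at $p$), and the same construction applied to $(E',f',P')$ yields an analogous formula.

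Next I would transfer $f$ and $f'$ to a common level $M'':=\mathrm{lcm}(N,N')$. For each prime $\ell\mid NN'/M$, the relevant level-raising or oldform operation multiplies the CM-point integral for $f$ by $|\tilde E^\mathrm{ns}(\mathbb{F}_\ell)|/\ell$ and similarly for $f'$, producing exactly the remaining Euler-factor products appearing in the theorem. The hypothesis $E[p^m]^\mathrm{ss}\cong E'[p^m]^\mathrm{ss}$ together with the Eichler--Shimura relation and Chebotarev yields $a_\ell(E)\equiv a_\ell(E')\pmod{p^m}$ for all $\ell\nmid NN'/M$, and the definition of $M$ forces the same congruence at the remaining primes dividing $M$. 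Hence the level-raised $q$-expansions $\tilde f,\tilde f'$ at level $M''$ agree modulo $p^m$ up to sign, a congruence preserved by both $\theta^{-1}$ and $p$-depletion. Since evaluation at a Serre--Tate CM point $x_K$ on the Igusa tower depends only on the $p$-adic $q$-expansion, the two CM-point integrals are congruent modulo $p^m\O_{K_p}$, and reinserting the Euler-factor products gives the stated congruence.

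The principal obstacle is establishing the Waldspurger-type formula of the first step with exactly the claimed Euler factor $|\tilde E^\mathrm{ns}(\mathbb{F}_p)|/p$ uniformly across all reduction types of $E$ at $p$ (good ordinary, good supersingular, multiplicative, additive), and in particular at $p=2$, where $p$-integrality of the Euler factor, of $\theta^{-1}$, and of the evaluation-at-$x_K$ map all have to be checked carefully on the Igusa tower. A secondary difficulty is the level-raising bookkeeping when $\ord_\ell(N)\neq\ord_\ell(N')$: one must compare newforms and oldforms on distinct modular (or Shimura) curves and verify that the scalar relating their CM-point integrals is precisely the local factor $|\tilde E^\mathrm{ns}(\mathbb{F}_\ell)|/\ell$ in every case.
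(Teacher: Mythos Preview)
Your outline is essentially the paper's own approach, but with one significant mislabeling that matters. You frame the identity
\[
\log_{\omega_E}(P)\;\longleftrightarrow\;\text{value of }\theta^{-1}(f^{(p)})\text{ at CM points}
\]
as a ``$p$-adic Waldspurger formula of BDP flavor'' to be quoted. The paper explicitly does \emph{not} invoke BDP; indeed it stresses that the argument ``does not require the main result of \cite{Bertolini2013}''. Instead, the paper establishes this identity directly: one shows $\theta^{-1}(f^{(p)})$ is a Coleman primitive of $\iota^{\#,*}\omega_{f^{(p)}}$ on the ordinary locus, and then applies Liu--Zhang--Zhang's extension of Coleman's theorem (their Proposition~A.1) to identify this Coleman primitive with the pullback $\iota^{\#,*}\log_{\omega_f^\#}$. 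This is exactly the ``principal obstacle'' you flag at the end, and the resolution is Coleman integration, not a Waldspurger formula. (The actual BDP formula involves $(\log P)^2$ and a $p$-adic $L$-value, which is not what you wrote down anyway.)

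Two smaller points of imprecision. First, the Heegner point $P$ is a trace over $\mathcal{C}\ell(\mathcal{O}_K)$, so $\log_{\omega_E}(P)$ is a \emph{sum} of values at CM test triples $\mathfrak{a}\star(A,A[\mathfrak{N}^\#],\omega)$, not a value at a single $x_K$; the Euler factors emerge only after this summation (the paper's Lemma~\ref{lemma-calc}). Second, your ``level-raising'' step is what the paper calls \emph{stabilization}: one defines operators $F\mapsto F^{(\ell)^0}$ via $V_\ell$, computes their effect on CM test triples moduli-theoretically (Lemma~\ref{lemma-CMstabcalc}), and shows that summing over the class group produces precisely the factor $|\tilde E^{\mathrm{ns}}(\mathbb{F}_\ell)|/\ell$. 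This bookkeeping is carried out uniformly for all reduction types, which addresses your secondary difficulty.

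So: right skeleton, right obstacles identified, but the paper resolves those obstacles by direct $p$-adic integration rather than by quoting a formula, and this is what makes the argument go through at $p=2$ and at primes of bad reduction.
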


\begin{remark}
  Recall that  $\tilde E^\mathrm{ns}(\mathbb{F}_\ell)$ denotes the number of $\mathbb{F}_\ell$-points of the nonsingular part of the mod $\ell$ reduction of $E$, which is $\ell+1-a_\ell(E)$ if $\ell\nmid N$, $\ell\pm1$ if $\ell|| N$ and $\ell$ if $\ell^2|N$. The factors in the above congruence can be understood as the result of removing the Euler factors of $L(E,1)$ and $L(E',1)$ at bad primes.
\end{remark}

\begin{remark}
The link between the $p$-adic logarithm of Heegner points and $p$-adic $L$-functions dates back to Rubin \cite{Rubin1992} in the CM case and was recently established in great generality by Bertolini--Darmon--Prasanna \cite{Bertolini2013} and Liu--S. Zhang--W. Zhang \cite{Liu2014}. However, our congruence formula is based on direct $p$-adic integration and does \emph{not} use this deep link with $p$-adic $L$-functions.   
\end{remark}

\begin{remark}
  Since there is no extra difficulty, we prove a slightly more general version (Theorem \ref{thm:genmaincongruence}) for Heegner points on abelian varieties of $\GL_2$-type. The same type of congruence should hold for modular forms of weight $k\ge2$ (in a future work), where the $p$-adic logarithm of Heegner points is replaced by the $p$-adic Abel--Jacobi image of generalized Heegner cycles defined in \cite{Bertolini2013}. 
\end{remark}

\subsection{A by-product: the 2-part of the BSD conjecture}

The BSD conjecture predicts the precise formula \begin{equation}
  \label{eq:bsdformula}
\frac{L^{(r)}(E/\mathbb{Q},1)}{r!\Omega(E/\mathbb{Q}) R(E/\mathbb{Q})}=\frac{\prod_p c_p(E/\mathbb{Q})\cdot |\Sha(E/\mathbb{Q})|}{|E(\mathbb{Q})_\mathrm{tor}|^2}  
\end{equation} for the leading coefficient of the Taylor expansion of $L(E/\mathbb{Q},s)$ at $s=1$ (here $r$ denotes the analytic rank) in terms of various important arithmetic invariants of $E$ (see \cite{Gross2011} for detailed definitions). The odd-part of the BSD conjecture has recently been established in great generality when $r\le1$, but very little (beyond numerical verification) is known concerning the \emph{2-part of the BSD conjecture} (BSD(2) for short). A notable exception is Tian's breakthrough \cite{Tian2014} on the congruent number problem, which establishes BSD(2) for many quadratic twists of $X_0(32)$ when $r\le1$. Coates outlined a program (\cite[p.35]{Coates2013}) generalizing Tian's method for establishing BSD(2) for many quadratic twists of a general elliptic curve when $r\le1$, which has succeeded for two more examples $X_0(49)$ (\cite{Coates2015}) and $X_0(36)$ (\cite{Cai2016}). We remark that all these three examples are CM with rational 2-torsion.

We now can state the following consequence on BSD(2) when $r\le1$ for many explicit twists, at least when the local Tamagawa number  at 2 is odd.

\begin{theorem}\label{thm:2partBSD}
  Let $E/\mathbb{Q}$ be an elliptic curve with $E(\mathbb{Q})[2]=0$. Assume there is an imaginary quadratic field $K$ satisfying the Heegner hypothesis for $N$ and Assumption (\ref{eq:star}). Further assume that the local Tamagawa number $c_2(E)$ is odd. If $E$ has additive reduction at 2, further assume its Manin constant is odd.

  Let $\mathcal{S}$ be the set of primes $$\mathcal{S}=\{\ell\nmid 2N: \ell \text{ splits in }K, \Frob_\ell\in \Gal(\mathbb{Q}(E[2])/\mathbb{Q})\text{ has order }3\}.$$
Let $\mathcal{N}$ be the set of all integers $d\equiv 1\pmod{4}$ such that $|d|$ is a square-free product of primes in $\mathcal{S}$. We have:
  \begin{enumerate}
  \item \label{item:2partK} If BSD(2) is true for $E/K$, then BSD(2) is true for $E^{(d)}/K$, for any $d\in\mathcal{N}$.
  \item \label{item:2partQ} If BSD(2) is true for $E/\mathbb{Q}$ and $E^{(d_K)}/\mathbb{Q}$, then BSD(2) is true for $E^{(d)}/\mathbb{Q}$ and $E^{(d\cdot d_K)}/\mathbb{Q}$, for any $d\in\mathcal{N}$ such that $\chi_d(-N)=1$.
  \end{enumerate}
\end{theorem}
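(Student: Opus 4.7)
The plan is to apply Theorem~\ref{thm:maincongruence} at $p=2$, $m=1$ to the pair $(E, E^{(d)})$ (and, for part~(\ref{item:2partQ}), also to $(E^{(d_K)}, E^{(d \cdot d_K)})$), and then to translate the resulting mod~$2$ congruence of $2$-adic Heegner logarithms into an equality of the $2$-parts of the Birch--Swinnerton-Dyer formula. First I would verify that Assumption~(\ref{eq:star}) propagates from $E$ to $E^{(d)}$. Since the quadratic character $\chi_d$ is trivial modulo $2$, we have $E[2]\cong E^{(d)}[2]$ as $G_{\mathbb Q}$-modules, so Theorem~\ref{thm:maincongruence} applies. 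The crucial observation is that for $\ell \mid d$ the defining property of $\mathcal S$ forces $\Frob_\ell$ to have order $3$ in $\Gal(\mathbb Q(E[2])/\mathbb Q)\subseteq \GL_2(\mathbb F_2)$, so the characteristic polynomial of Frobenius on $E[2]$ reduces to $x^2+x+1$, giving $a_\ell(E)\equiv 1\pmod 2$ and hence $|\tilde E^{\mathrm{ns}}(\mathbb F_\ell)|=\ell+1-a_\ell(E)$ odd. For odd $\ell\mid N$ the Euler factors are handled using Tate's local formulas together with the fact (a consequence of~(\ref{eq:star}), as explained in \S\ref{sec:strategy-proof}) that $c_\ell(E)$ is odd. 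Combining these calculations with~(\ref{eq:star}) shows that the Euler-factor products in Theorem~\ref{thm:maincongruence} are $2$-adic units on each side, so
$$\frac{|\tilde E^{(d),\mathrm{ns}}(\mathbb F_2)|\cdot\log_{\omega_{E^{(d)}}}(P^{(d)})}{2}\not\equiv 0\pmod 2,$$
and in particular $P^{(d)}\in E^{(d)}(K)$ is non-torsion, with its $2$-adic valuation pinned down.

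Next I would deduce BSD(2) for $E^{(d)}/K$. Since $P^{(d)}$ is non-torsion, Gross--Zagier--Kolyvagin gives $r_{\mathrm{an}}(E^{(d)}/K)=1$ and yields the divisibility of $|\Sha(E^{(d)}/K)[2^\infty]|$ by the Heegner index $I_{E^{(d)}}:=[E^{(d)}(K):\mathbb Z P^{(d)}+E^{(d)}(K)_{\mathrm{tor}}]^2$ up to controlled $2$-adic loss. A formal-group computation at $2$, using that $c_2(E^{(d)})$ and (in the additive case) the Manin constant are odd --- both inherited from $E$ since $d\in\mathcal N$ is coprime to $2N$ --- shows that the $2$-adic valuation of $\log_{\omega_{E^{(d)}}}(P^{(d)})$ determines that of $I_{E^{(d)}}$ up to a unit depending on $|\tilde E^{(d),\mathrm{ns}}(\mathbb F_2)|$. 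Writing the same identity for $E/K$ and invoking the congruence of Theorem~\ref{thm:maincongruence} to compare, one obtains a clean relation between the $2$-parts of $I_{E^{(d)}}$ and $I_E$. Combining with the Gross--Zagier formula $L'(E^{(d)}/K,1)=C\cdot\hat h(P^{(d)})$ and the assumed BSD(2) for $E/K$, this forces the $2$-parts of both sides of the BSD formula for $E^{(d)}/K$ to coincide, establishing part~(\ref{item:2partK}).

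For part~(\ref{item:2partQ}), I would factor $L(E^{(d)}/K,s)=L(E^{(d)}/\mathbb Q,s)\cdot L(E^{(d\cdot d_K)}/\mathbb Q,s)$; the sign condition $\chi_d(-N)=1$ makes the two $\mathbb Q$-functional-equation signs opposite, so exactly one of $E^{(d)}/\mathbb Q$ and $E^{(d\cdot d_K)}/\mathbb Q$ has analytic rank $0$ and the other analytic rank $1$. Assuming BSD(2) for $E/\mathbb Q$ and $E^{(d_K)}/\mathbb Q$, the analogous factorization at the level of periods, regulators, Tamagawa numbers, torsion, and $\Sha[2^\infty]$ yields BSD(2) for $E/K$, hence for $E^{(d)}/K$ by part~(\ref{item:2partK}). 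Disentangling the contributions of the two $\mathbb Q$-curves --- which factorize cleanly modulo~$2$ because $E(\mathbb Q)[2]=0$ (so no torsion growth over $K$) and because every bad prime of $E^{(d)}$ or $E^{(d\cdot d_K)}$ is odd and split in $K$ --- then extracts BSD(2) for each $\mathbb Q$-curve separately. The main obstacle throughout is the $2$-adic comparison between the formal logarithm and the Heegner index: at odd primes this is routine formal-group theory, but at $p=2$ the absence of a clean converse theorem to Gross--Zagier--Kolyvagin forces one to book-keep $c_2$, the Manin constant, and $|\tilde E^{\mathrm{ns}}(\mathbb F_2)|$ by hand, and the hypotheses of Theorem~\ref{thm:2partBSD} are calibrated precisely so that this bookkeeping closes up.
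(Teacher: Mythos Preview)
Your opening moves---propagating Assumption~(\ref{eq:star}) to $E^{(d)}$ via Theorem~\ref{thm:maincongruence}, checking that the Euler factors at $\ell\mid d$ are $2$-adic units because $\Frob_\ell$ has order~$3$ on $E[2]$, and using the Gross--Zagier reformulation~(\ref{eq:grosszagier2}) of BSD(2) over $K$---are correct and match the paper. But the argument has two genuine gaps, one in each part.

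\textbf{Part~(\ref{item:2partK}): missing the Selmer comparison.} Once you know $[E^{(d)}(K):\mathbb{Z}P^{(d)}]/c_{E^{(d)}}$ is a $2$-adic unit, BSD(2) for $E^{(d)}/K$ reduces to showing all $c_\ell(E^{(d)})$ are odd \emph{and} $\Sha(E^{(d)}/K)[2]=0$. You handle the Tamagawa numbers, but for $\Sha[2]$ you invoke Kolyvagin's divisibility ``up to controlled $2$-adic loss.'' This is exactly where things break at $p=2$: the Kolyvagin constant is not known to be a unit, so the loss may be a genuine power of~$2$. The paper circumvents this entirely. It proves directly (Lemma~\ref{lem:2selmer}) that the isomorphism $E[2]\cong E^{(d)}[2]$ induces $\Sel_2(E/K)\cong \Sel_2(E^{(d)}/K)$, by checking equality of local Kummer conditions place by place---the key point being that for $\ell\in\mathcal{S}$ the condition $\Frob_\ell$ of order~$3$ forces $H^1(K_v,E[2])=0$ (such primes are \emph{silent} in the sense of Mazur--Rubin). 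Since BSD(2) for $E/K$ gives $\dim\Sel_2(E/K)=1$, and the Heegner point gives $\rank E^{(d)}(K)=1$, the exact sequence $0\to E^{(d)}(K)/2\to\Sel_2(E^{(d)}/K)\to\Sha(E^{(d)}/K)[2]\to 0$ forces $\Sha(E^{(d)}/K)[2]=0$. Your Kolyvagin route does not close this gap.

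\textbf{Part~(\ref{item:2partQ}): disentangling fails without Zhai's formula.} You correctly reduce to BSD(2) for $E^{(d)}/K$ via isogeny invariance and part~(\ref{item:2partK}). But ``disentangling the contributions of the two $\mathbb{Q}$-curves'' is a product-to-factor fallacy: BSD(2) over $K$ says the product of the two BSD ratios over $\mathbb{Q}$ is a $2$-adic unit, and clean factorization of the arithmetic invariants does not by itself force each ratio to be a unit. You need one of the two ratios computed independently. The paper supplies this via a formula of Zhai~\cite{Zhai2016} based on modular symbols (Lemma~\ref{lem:rankzero2part}): under the hypothesis $\chi_d(-N)=1$ (equivalently $\Delta(E)<0$, or $\Delta(E)>0$ and $d>0$, by Corollary~\ref{cor:disc}), Zhai's result shows $L(E^{(d)}/\mathbb{Q},1)/\Omega(E^{(d)}/\mathbb{Q})$ is a $2$-adic unit for the rank-zero twist. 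This, together with the Selmer comparison over $\mathbb{Q}$ (Lemma~\ref{lem:2selmerQ}) showing $\Sha(E^{(d)}/\mathbb{Q})[2]=0$, gives BSD(2) for the rank-zero curve; isogeny invariance then gives it for the rank-one curve. Your proposal contains no analogue of this analytic input.
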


\begin{remark}
  BSD(2) for a single elliptic curve (of small conductor) can be proved by numerical calculation when $r\le1$ (see \cite{Miller2011} for curves of conductor at most 5000). Theorem \ref{thm:2partBSD} then allows one to deduce BSD(2) for many of its quadratic twists (of arbitrarily large conductor). See \S \ref{sec:examples} for examples. 
\end{remark}

\begin{remark}
  Manin's conjecture asserts the Manin constant for any optimal curve is 1, which would imply that the Manin constant for $E$ is odd since $E$ is assumed to have no rational 2-torsion. Cremona has proved Manin's conjecture for all optimal curves of conductor at most $380000$ (see \cite[Theorem 2.6]{Agashe2006} and the update at \url{http://johncremona.github.io/ecdata/#optimality}). 
\end{remark}

\subsection{Structure of the paper}

  The main congruence (Theorem \ref{thm:maincongruence}) is proved in \S \ref{sec:proof-main-theorem}. We explain the ideal of the proof in \S \ref{sec:idea-argument}. In \S \ref{sec:goldf-conj-gener} we prove the application to Goldfeld's conjecture for general $E$ (Theorem \ref{thm:goldfeld}). In \S \ref{sec:proof-4} and \S \ref{sec:proof-5}, we prove the application to BSD(2) (Theorem \ref{thm:2partBSD}). In \S \ref{sec:examples}, we include numerical examples illustrating the wide applicability of Theorems \ref{thm:goldfeld} and \ref{thm:2partBSD}.

\subsection{Acknowledgments} We are grateful to J. Coates, B. Mazur and W. Zhang for their interest and helpful comments. Our debt to the two papers \cite{Bertolini2013} and \cite{Liu2014} should be clear to the readers. The examples in this article are computed using Sage (\cite{sage}).

\section{Proof of the main congruence}\label{sec:proof-main-theorem}

\subsection{The strategy of the proof}\label{sec:idea-argument}We first give the idea of the proof of Theorem \ref{thm:maincongruence}. From the congruent Galois representations, we deduce that the coefficients of the associated modular forms are congruent away from the bad primes in $pNN'/M$. After applying suitable stabilization operators (\S \ref{stabilization}) at primes in $NN'/M$, we obtain $p$-adic modular forms whose coefficients are all congruent. This congruence is preserved when applying a power $\theta^j$ of the Atkin--Serre operator $\theta$. Letting $j\rightarrow -1$ ($p$-adically) and using Coleman's theorem on $p$-adic integration (generalized in \cite{Liu2014}, see \S \ref{sec:coleman-integration}), we can identify the values of $\theta^{-1}f$ and $\log_{\omega_f}$ at CM points. The action of stabilization operators at CM points (\S \ref{sec:stabilizationCM}) gives rise to the extra Euler factors. Summing over the CM points finally proves the main congruence between $p$-adic logarithms of Heegner points (\S \ref{sec:proof-theorem}). This procedure is entirely parallel to the construction of anticyclotomic $p$-adic $L$-functions of \cite{Bertolini2013}, but we stress that the congruence itself (without linking to the $p$-adic $L$-function) is more direct and does \emph{not} require the main result of \cite{Bertolini2013}. In particular, we work on $X_0(N)$ directly (as opposed to working on the finite cover $X_1(N)$) and we do not require $E$ to have good reduction at $p$.

\subsection{$p$-adic modular forms}\label{sec:padicmodularforms} Henceforth, it will be useful to adopt Katz's viewpoint of $p$-adic modular forms as rules on the moduli space of isomorphism classes of ``ordinary test triples''. (For a detailed reference, see for example \cite[Chapter V]{Katz1976}.) 

\begin{definition}[Ordinary test triple]  Let $R$ be a $p$-adic ring (i.e. the natural map $R \rightarrow \varprojlim R/p^nR$ is an isomorphism). An \emph{ordinary test triple} $(A,C,\omega)$ over $R$ means the following:
\begin{enumerate}
\item $A/R$ is an elliptic curve which is ordinary (i.e. $A$ is ordinary over $R/pR$),
\item (level $N$ structure) $C \subset A[N]$ is a cyclic subgroup of order $N$ over $R$ such that the $p$-primary part $C[p^{\infty}]$ is the \emph{canonical subgroup} of that order (i.e., letting $\hat{A}$ be the formal group of $A$, we have $C[p^{\infty}] = \hat{A}[p^{\infty}]\cap C$),
\item $\omega \in \Omega_{A/R}^1 := H^0(A/R,\Omega^1)$ is a differential. 
\end{enumerate}
Given two ordinary test triples $(A,C,\omega)$ and $(A',C',\omega')$ over $R$, we say there is an \emph{isomorphism} $(A,C,\omega) \xrightarrow{\sim} (A',C',\omega')$ if there is an isomorphism $i : A \rightarrow A'$ of elliptic curves over $R$ such that $\phi(C) = C'$ and $i^*\omega' = \omega$. Henceforth, let $[(A,C,\omega)]$ denote the isomorphism class of the test triple $(A,C,\omega)$.
\end{definition}

\begin{definition}[Katz's interpretation of $p$-adic modular forms]\label{def:modforms} Let $S$ be a fixed $p$-adic ring. Suppose $F$ as a rule which, for every $p$-adic $S$-algebra $R$, assigns values in $R$ to \emph{isomorphism classes} of test triples $(A,C,\omega)$ of level $N$ defined over $R$. As such a rule assigning values to isomorphism classes of ordinary test triples, consider the following conditions:
\begin{enumerate}
\item (Compatibility under base change) For all $S$-algebra homomorphisms $i : R \rightarrow R'$, we have
$$F((A,C,\omega)\otimes_i R') = i(F(A,C,\omega)).$$
\item (Weight $k$ condition) Fix $k \in \mathbb{Z}$. For all $\lambda \in R^{\times}$, 
$$F(A,C,\lambda\cdot\omega) = \lambda^{-k}\cdot F(A,C,\omega).$$
\item (Regularity at cusps) For any positive integer $d|N$, letting $\text{Tate}(q) = \mathbb{G}_m/q^{\mathbb{Z}}$ denote the Tate curve over the $p$-adic completion of $R((q^{1/d}))$, and letting $C \subset \text{Tate}(q)[N]$ be any level $N$ structure, we have
$$F(\text{Tate}(q),C,du/u) \in R[[q^{1/d}]]$$
where $u$ is the canonical parameter on $\mathbb{G}_m$.
\end{enumerate}
 If $F$ satisfies conditions (1)-(2), we say it is a \emph{weak $p$-adic modular form over $S$ of level $N$}. If $F$ satisfies conditions (1)-(3), we say it is a \emph{$p$-adic modular form over $S$ of level $N$}. Denote the space of weak $p$-adic modular forms over $S$ of level $N$ and the space of $p$-adic modular forms over $S$ of level $N$ by $\tilde{M}_k^{\text{$p$-adic}}(\Gamma_0(N))$ and $M_k^{\text{$p$-adic}}(\Gamma_0(N))$, respectively. Note that $M_k^{\text{$p$-adic}}(\Gamma_0(N)) \subset \tilde M_k^{\text{$p$-adic}}(\Gamma_0(N))$.
\end{definition}
Let $\text{Tate}(q)$ be the Tate curve over the $p$-adic completion of $S((q))$. If $F \in \tilde{M}_k^{\text{$p$-adic}}(\Gamma_0(N))$, one defines the $q$-expansion (at infinity) of $F$ as $F(q) := F(\text{Tate}(q),\mu_N,du/u) \in S[[q]]$, which defines a \emph{$q$-expansion map} $F \mapsto F(q)$. The \emph{$q$-expansion principle} (see \cite[Theorem I.3.1]{Gouvea1988} or \cite{Katz1975}) says that the $q$-expansion map is injective for $F \in M_k^{\text{$p$-adic}}(\Gamma_0(N))$.

From now on, let $N$ denote the minimal level of $F$ (i.e. the smallest $N$ such that $F \in \tilde{M}_k^{\text{$p$-adic}}(\Gamma_0(N))$). For any positive integer $N'$ such that $N|N'$, we can define
$$[N'/N]^*F(A,C,\omega) := F(A,C[N],\omega)$$
so that $[N'/N]^*F \in \tilde{M}_k^{\text{$p$-adic}}(\Gamma_0(N'))$. When the larger level $N'$ is clear from context, we will often abuse notation and simply view $F \in \tilde{M}_k^{\text{$p$-adic}}(\Gamma_0(N'))$ by identifying $F$ and $[N'/N]^*F$.

We now fix $N^{\#}\in \mathbb{Z}_{>0}$ such that $N|N^{\#}$, so that we can view $F \in \tilde{M}_k^{\text{$p$-adic}}(\Gamma_0(N^{\#}))$, and further suppose $\ell^2|N^{\#}$ where $\ell$ is a prime (not necessarily different from $p$). Take the base ring $S = \mathcal{O}_{\mathbb{C}_p}$. Then the operator on $\tilde{M}_k^{\text{$p$-adic}}(\Gamma_0(N^{\#}))$ given on $q$-expansions by
$$F(q) \mapsto F(q^{\ell})$$
has a moduli-theoretic interpretation given by ``dividing by $\ell$-level structure''. That is, we have an operation on test triples $(A,C,\omega)$ defined over $p$-adic $\mathcal{O}_{\mathbb{C}_p}$-algebras $R$ given by
$$V_{\ell}(A,C,\omega) = (A/C[\ell],\pi(C),\check{\pi}^*\omega)$$
where $\pi : A \rightarrow A/C[\ell]$ is the canonical projection and $\check{\pi} : A/C[\ell] \rightarrow A$ is its dual isogeny.

Thus $V_{\ell}$ induces a form $V_{\ell}^*F \in \tilde{M}_k^{\text{$p$-adic}}(\Gamma_0(N^{\#}))$ defined by
$$V_{\ell}^*F(A,C,\omega) := F(V_{\ell}(A,C,\omega)).$$
For the Tate curve test triple $(\text{Tate}(q),\mu_{N^{\#}},du/u)$, one sees that $(\mu_{N^{\#}})[\ell] = \mu_{\ell}$ and $\pi : \text{Tate}(q) \rightarrow \text{Tate}(q^{\ell})$. Since $\pi : \hat{\mathbb{G}}_m = \widehat{\text{Tate}(q)} \rightarrow \widehat{\text{Tate}(q^{\ell})} = \hat{\mathbb{G}}_m$ is multiplication by $\ell$, we have $\pi^*du/u = \ell\cdot du/u$, and so  $\check{\pi}^*du/u = du/u$. Thus one sees that $V_{\ell}$ acts on $q$-expansions by
$$V_{\ell}^*F(q) = V_{\ell}^*F(\text{Tate}(q),\mu_{N^{\#}},du/u) = F(\text{Tate}(q^{\ell}),\mu_{N^{\#}/\ell},du/u) = F(q^{\ell}).$$
If $F \in M_k^{\text{$p$-adic}}(\Gamma_0(N^{\#}))$, then $V_{\ell}^*F \in M_k^{\text{$p$-adic}}(\Gamma_0(N^{\#}))$, and the $q$-expansion principle then implies that $V_{\ell}^*F$ is the unique $p$-adic modular form of level $N^{\#}$ with $q$-expansion $F(q^{\ell})$.
 
\subsection{Stabilization operators}\label{sec:stabilization}In this section, we define the ``stabilization operators'' alluded to in \S \ref{sec:idea-argument} as operations on rules on the moduli space of isomorphism classes of test triples. Let $F \in \tilde{M}_k^{\text{$p$-adic}}(\Gamma_0(N))$ and henceforth suppose $N$ is the \emph{minimal} level of $F$. View $F \in \tilde{M}_k^{\text{$p$-adic}}(\Gamma_0(N^{\#}))$, and let $a_{\ell}(F)$ denote the coefficient of the $q^{\ell}$ term in the $q$-expansion $F(q)$. Then up to permutation there is a unique pair of numbers $(\alpha_{\ell}(F), \beta_{\ell}(F)) \in \mathbb{C}_p^2$ such that $\alpha_{\ell}(F) + \beta_{\ell}(F) = a_{\ell}(F)$, $\alpha_{\ell}(F)\beta_{\ell}(F) = \ell^{k-1}$. We henceforth fix an ordered pair $(\alpha_{\ell}(F),\beta_{\ell}(F))$. 

\begin{definition}\label{stabilization}When $\ell\nmid N$, we define the \emph{$(\ell)^+$-stabilization of $F$} as
\begin{equation}\label{eq:stabilizationmoduli-1}
F^{(\ell)^+} = F - \beta_{\ell}(F)V_{\ell}^*F,
\end{equation}
the \emph{$(\ell)^-$-stabilization of $F$} as
\begin{equation}\label{eq:stabilizationmoduli-0}
F^{(\ell)^-} = F - \alpha_{\ell}(F)V_{\ell}^*F,
\end{equation}
and the \emph{$(\ell)^0$-stabilization for $F$} as
\begin{equation}\label{eq:stabilizationmoduli1}
F^{(\ell)^0} = F - a_{\ell}(F)V_{\ell}^*F + \ell^{k-1}V_{\ell}^*V_{\ell}^*F.
\end{equation}
We have $F^{(\ell)^{*}}\in M_k^{\text{$p$-adic}}(\Gamma_0(N^{\#}))$ for $* \in \{+,-,0\}$.

Observe that on $q$-expansions, we have
$$F^{(\ell)^+}(q) := F(q) - \beta_{\ell}(F)F(q^{\ell}),$$
$$F^{(\ell)^-}(q) := F(q) - \alpha_{\ell}(F)F(q^{\ell}),$$
$$F^{(\ell)^0}(q) := F(q) - a_{\ell}(F)F(q^{\ell}) + \ell^{k-1}F(q^{\ell^2}).$$
It follows that if $F$ is a $T_n$-eigenform where $\ell\nmid n$, then $F^{(\ell)^{*}}$ is still an eigenform for $T_n$. If $F$ is a $T_{\ell}$-eigenform, one verifies by direct computation that $a_{\ell}(F^{(\ell)^+}) = \alpha_{\ell}(F)$, $a_{\ell}(F^{(\ell)^-}) = \beta_{\ell}(F)$, and $a_{\ell}(F^{(\ell)^0}) = 0$. 

When $\ell|N$, we define the \emph{$(\ell)^0$-stabilization of $F$} as
\begin{equation}\label{eq:stabilizationmoduli2}
F^{(\ell)^0} = F - a_{\ell}(F)V_{\ell}^*F.
\end{equation}
Again, we have $F^{(\ell)^0} \in M_k^{\text{$p$-adic}}(\Gamma_0(N^{\#}))$. On $q$-expansions, we have
$$F^{(\ell)^0}(q) := F(q) - a_{\ell}(F)F(q^{\ell}).$$
It follows that if $F$ is a $U_n$-eigenform where $\ell\nmid n$, then $F^{(\ell)^0}$ is still an eigenform for $U_n$. If $F$ is a $U_{\ell}$-eigenform, one verifies by direct computation that $a_{\ell}(F^{(\ell)^0}) = 0$. 

Note that for $\ell_1 \neq \ell_2$, the stabilization operators $F \mapsto F^{(\ell_1)^{*}}$ and $F\mapsto F^{(\ell_2)^{*}}$ commute. Then for pairwise coprime integers with prime factorizations $N_+ = \prod_i \ell_i^{e_i}$, $N_- = \prod_j \ell^{e_j}$, $N_0 = \prod_m \ell_m^{e_m}$, we define the \emph{$(N_+,N_-,N_0)$-stabilization of $F$} as
$$F^{(N_+,N_-,N_0)} := F^{\prod_i (\ell_i)^+\prod_j (\ell_j)^-\prod_m (\ell_m)^0}.$$ 
\end{definition}

\subsection{Stabilization operators at CM points}\label{sec:stabilizationCM}
Let $K$ be an imaginary quadratic field satisfying the Heegner hypothesis with respect to $N^{\#}$. Assume that $p$ splits in $K$, and let $\mathfrak{p}$ be prime above $p$ determined by the embedding $K \subset \mathbb{C}_p$. Let $\mathfrak{N}^{\#} \subset \mathcal{O}_K$ be a fixed ideal such that $\mathcal{O}/\mathfrak{N}^{\#} = \mathbb{Z}/N^{\#}$, and if $p|N^{\#}$, we assume that $\mathfrak{p}|\mathfrak{N}^{\#}$. Let $A/\mathcal{O}_{\mathbb{C}_p}$ be an elliptic curve with CM by $\mathcal{O}_K$. By the theory of complex multiplication and Deuring's theorem, $(A,A[\mathfrak{N}^{\#}],\omega)$ is an ordinary test triple over $\mathcal{O}_{\mathbb{C}_p}$.

A crucial observation is that at an ordinary CM test triple $(A,A[\mathfrak{N}^{\#}],\omega)$, one can express $V_{\ell}(A,A[\mathfrak{N}^{\#}],\omega)$ and thus $(\ell)$-stabilization operators in terms of the action of $\mathcal{C}\ell(\mathcal{O}_K)$ on $A$ coming from Shimura's reciprocity law. First we recall the Shimura action: given an ideal $\mathfrak{a} \subset \mathcal{O}_K$, we define $A_{\mathfrak{a}} = A/A[\mathfrak{a}]$, an elliptic curve over $\mathcal{O}_{\mathbb{C}_p}$ which has CM by $\mathcal{O}_K$, whose isomorphism class depends only on the ideal class of $\mathfrak{a}$.
Let $\phi_{\mathfrak{a}} : A \rightarrow A_{\mathfrak{a}}$ denote the canonical projection. Note that there is an induced action of prime-to-$\mathfrak{N}^{\#}$ integral ideals $\mathfrak{a}\subset\mathcal{O}_K$ on the set of triples $(A,A[\mathfrak{N}^{\#}],\omega)$ given by  of isomorphism classes $[(A,A[\mathfrak{N}^{\#}],\omega)]$, given by
$$\mathfrak{a}\star(A,A[\mathfrak{N}^{\#}],\omega) = (A_{\mathfrak{a}},A_{\mathfrak{a}}[\mathfrak{N}^{\#}],\omega_{\mathfrak{a}})$$
where $\omega_{\mathfrak{a}}\in\Omega_{A_{\mathfrak{a}}/\mathbb{C}_p}^1$ is the unique differential such that $\phi_{\mathfrak{a}}^*\omega_{\mathfrak{a}} = \omega$. Note that this action descends to an action on the set of isomorphism classes of triples $[(A,A[\mathfrak{N}^{\#}],\omega)]$ given by
$\mathfrak{a}\star[(A,A[\mathfrak{N}^{\#}],\omega)] = [\mathfrak{a}\star(A,A[\mathfrak{N}^{\#}],\omega)]$. Letting $\mathfrak{N} = (\mathfrak{N}^{\#},N)$, also note that for any $\mathfrak{N}'\subset \mathcal{O}_K$ with norm $N'$ and $\mathfrak{N}|\mathfrak{N}'|N^{\#}$, the Shimura reciprocity law also induces an action of prime-to-$\mathfrak{N}'$ integral ideals on CM test triples and isomorphism classes of ordinary CM test triples of level $N'$.

The following calculation relates the values of $V_{\ell}$, $F^{(\ell)}$ and $F$ at CM test triples.

\begin{lemma}\label{lemma-CMstabcalc}For a prime $\ell$, let $v|\mathfrak{N}^{\#}$ be the corresponding prime ideal of $\mathcal{O}_K$ above it, let $\overline{v}$ denote the prime ideal which is the complex conjugate of $v$, and let $\mathfrak{a}\subset \mathcal{O}_K$ be an ideal prime to $\mathfrak{N}^{\#}$. Then for any $\omega \in \Omega_{A/\mathcal{O}_{\mathbb{C}_p}}^1$, we have
\begin{equation}\label{eq:CMstabilizationformula-1}
[V_{\ell}(\mathfrak{a}\overline{\mathfrak{N}^{\#}}\star(A,A[\mathfrak{N}^{\#}],\omega))] = [\overline{v}^{-1}\mathfrak{a}\overline{\mathfrak{N}^{\#}}\star(A,A[\mathfrak{N}^{\#}v^{-1}],\omega)]
\end{equation}
and 
\begin{equation}\label{eq:CMstabilizationformula0}
[V_{\ell}(V_{\ell}(\mathfrak{a}\overline{\mathfrak{N}^{\#}}\star(A,A[\mathfrak{N}^{\#}],\omega)))] = [\overline{v}^{-2}\mathfrak{a}\overline{\mathfrak{N}^{\#}}\star(A,A[\mathfrak{N}^{\#}v^{-2}],\omega)].
\end{equation}
As a consequence, if $F \in \tilde{M}_k^{\text{$p$-adic}}(\Gamma_0(N^{\#}))$, when $\ell\nmid N$ we have
\begin{equation}\label{eq:CMstabilizationformula1-1}
\begin{split}&F^{(\ell)^+}(\mathfrak{a}\overline{\mathfrak{N}^{\#}}\star(A,A[\mathfrak{N}^{\#}],\omega)) \\
&= F(\mathfrak{a}\overline{\mathfrak{N}^{\#}}\star(A,A[\mathfrak{N}^{\#}],\omega)) - \beta_{\ell}(F)F(\overline{v}^{-1}\mathfrak{a}\overline{\mathfrak{N}^{\#}}\star(A,A[\mathfrak{N}^{\#}],\omega)),
\end{split}
\end{equation}
\begin{equation}\label{eq:CMstabilizationformula1-0}
\begin{split}&F^{(\ell)^-}(\mathfrak{a}\overline{\mathfrak{N}^{\#}}\star(A,A[\mathfrak{N}^{\#}],\omega)) \\
&= F(\mathfrak{a}\overline{\mathfrak{N}^{\#}}\star(A,A[\mathfrak{N}^{\#}],\omega)) - \alpha_{\ell}(F)F(\overline{v}^{-1}\mathfrak{a}\overline{\mathfrak{N}^{\#}}\star(A,A[\mathfrak{N}^{\#}],\omega)),
\end{split}
\end{equation}
\begin{equation}\label{eq:CMstabilizationformula1}
\begin{split}&F^{(\ell)^0}(\mathfrak{a}\overline{\mathfrak{N}^{\#}}\star(A,A[\mathfrak{N}^{\#}],\omega)) \\
&= F(\mathfrak{a}\overline{\mathfrak{N}^{\#}}\star(A,A[\mathfrak{N}^{\#}],\omega)) - a_{\ell}(F)F(\overline{v}^{-1}\mathfrak{a}\overline{\mathfrak{N}^{\#}}\star(A,A[\mathfrak{N}^{\#}],\omega)) + \ell^{k-1}F(\overline{v}^{-2}\mathfrak{a}\overline{\mathfrak{N}^{\#}}\star(A,A[\mathfrak{N}^{\#}],\omega)),
\end{split}
\end{equation}
and when $\ell|N$,
\begin{equation}\label{eq:CMstabilizationformula2}
\begin{split}&F^{(\ell)^0}(\mathfrak{a}\overline{\mathfrak{N}^{\#}}\star(A,A[\mathfrak{N}^{\#}],\omega)) = F(\mathfrak{a}\overline{\mathfrak{N}^{\#}}\star(A,A[\mathfrak{N}^{\#}],\omega)) - a_{\ell}(F)F(\overline{v}^{-1}\mathfrak{a}\overline{\mathfrak{N}^{\#}}\star(A,A[\mathfrak{N}^{\#}],\omega)).
\end{split}
\end{equation}
\end{lemma}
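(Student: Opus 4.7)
The approach is entirely a matter of unwinding the definitions of the Shimura action and of $V_\ell$ and matching the two triples piece by piece. Write $(B, C_B, \omega_B) := \mathfrak{a}\overline{\mathfrak{N}^{\#}}\star(A,A[\mathfrak{N}^{\#}],\omega)$, so $B = A_{\mathfrak{a}\overline{\mathfrak{N}^{\#}}}$, $C_B = B[\mathfrak{N}^{\#}]$, and $\phi^*\omega_B = \omega$, where $\phi := \phi_{\mathfrak{a}\overline{\mathfrak{N}^{\#}}}: A \to B$ is the canonical isogeny from the Shimura action. By definition $V_\ell(B,C_B,\omega_B) = (B/C_B[\ell],\,\pi(C_B),\,\check{\pi}^*\omega_B)$ where $\pi: B \to B/C_B[\ell]$ is the quotient map and $\check{\pi}$ its dual isogeny. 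The plan is to identify each of these three data with the corresponding data on the right-hand side of (\ref{eq:CMstabilizationformula-1}).

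Since $\mathcal{O}_K/\mathfrak{N}^{\#}\cong\mathbb{Z}/N^{\#}$ and $\ell$ splits as $(\ell)=v\overline{v}$ with $v|\mathfrak{N}^{\#}$, we have $C_B[\ell]=B[v]$, whence $B/C_B[\ell] = B/B[v] = A_{v\mathfrak{a}\overline{\mathfrak{N}^{\#}}}$. The relation $v\overline{v}=(\ell)$ gives $\overline{v}^{-1} = v(\ell)^{-1}$, so $\overline{v}^{-1}\mathfrak{a}\overline{\mathfrak{N}^{\#}}$ and $v\mathfrak{a}\overline{\mathfrak{N}^{\#}}$ differ by the principal fractional ideal $(\ell)^{-1}$, giving a canonical identification $A_{v\mathfrak{a}\overline{\mathfrak{N}^{\#}}} \cong A_{\overline{v}^{-1}\mathfrak{a}\overline{\mathfrak{N}^{\#}}}$ compatible with the CM action. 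The image $\pi(C_B) = B[\mathfrak{N}^{\#}]/B[v]$ is cyclic of order $N^{\#}/\ell$ and annihilated by $\mathfrak{N}^{\#}v^{-1}$, hence equals $(B/B[v])[\mathfrak{N}^{\#}v^{-1}]$, which matches $A_{\overline{v}^{-1}\mathfrak{a}\overline{\mathfrak{N}^{\#}}}[\mathfrak{N}^{\#}v^{-1}]$ under the above identification. For the differential, the relation $\check{\pi}\circ\pi = [\ell]$ on $B$ gives $\pi^*(\check{\pi}^*\omega_B) = \ell\omega_B$, and a direct comparison with the pullback relation $\phi_{\overline{v}^{-1}\mathfrak{a}\overline{\mathfrak{N}^{\#}}}^*\omega_{\overline{v}^{-1}\mathfrak{a}\overline{\mathfrak{N}^{\#}}} = \omega$ defining the Shimura differential shows the two differentials agree on $A_{\overline{v}^{-1}\mathfrak{a}\overline{\mathfrak{N}^{\#}}}$. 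This establishes (\ref{eq:CMstabilizationformula-1}). Identity (\ref{eq:CMstabilizationformula0}) follows by iterating the same argument, replacing $v$ by $v^2$ at every step—this requires $v^2|\mathfrak{N}^{\#}$, which is built into the setup for $V_\ell\circ V_\ell$.

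Once (\ref{eq:CMstabilizationformula-1}) and (\ref{eq:CMstabilizationformula0}) are in hand, the stabilization identities (\ref{eq:CMstabilizationformula1-1})--(\ref{eq:CMstabilizationformula2}) are immediate: apply $F$ to both sides of the two identities of isomorphism classes of triples and invoke the moduli-theoretic relation $V_\ell^*F(X) = F(V_\ell(X))$ established at the end of \S \ref{sec:padicmodularforms} together with the defining formulas for $F^{(\ell)^+}, F^{(\ell)^-}, F^{(\ell)^0}$ given in Definition \ref{stabilization}. The case distinction $\ell\nmid N$ versus $\ell|N$ corresponds precisely to whether the $V_\ell\circ V_\ell$ term is present in the definition of $F^{(\ell)^0}$.

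The main technical obstacle will be the bookkeeping with differentials and ideal normalizations: verifying that the canonical identification $A_{v\mathfrak{a}\overline{\mathfrak{N}^{\#}}} \cong A_{\overline{v}^{-1}\mathfrak{a}\overline{\mathfrak{N}^{\#}}}$ coming from the principal ideal $(\ell)^{-1}$ carries $\check{\pi}^*\omega_B$ precisely to $\omega_{\overline{v}^{-1}\mathfrak{a}\overline{\mathfrak{N}^{\#}}}$ (and not merely up to a unit in $\mathcal{O}_K^\times$), which amounts to pinning down the normalization of the Shimura action on differentials. The remaining steps are routine manipulation of prime-to-level ideals and of cyclic subgroups.
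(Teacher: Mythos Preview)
Your approach is correct and essentially identical to the paper's. The paper carries out the same unwinding---identifying $C_B[\ell]=B[v]$, passing to $A_{v\mathfrak{a}\overline{\mathfrak{N}^{\#}}}$, and using $v\overline{v}=(\ell)$ to rewrite this as $A_{\overline{v}^{-1}\mathfrak{a}\overline{\mathfrak{N}^{\#}}}$---and it isolates exactly the obstacle you flag at the end (the differential bookkeeping) as a separate sub-lemma: under the canonical isomorphism $i:A_{(\ell)}\xrightarrow{\sim}A$ one has $(\check{\phi}_v^*\omega)_{\overline{v}}=i^*\omega$, proved by checking the identity of isogenies $\check{\phi}_v=i\circ\phi_{\overline{v}}$ via $i\circ\phi_{\overline{v}}\circ\phi_v=[\ell]$. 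This is precisely the ``pinning down'' you anticipate, and once it is done the rest is, as you say, routine.
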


\begin{proof}
Note that $(A_{\mathfrak{a}\overline{\mathfrak{N}^{\#}}}[\mathfrak{N}^{\#}])[\ell] = A_{\mathfrak{a}\overline{\mathfrak{N}^{\#}}}[v]$. Hence 
\begin{align*}[V_{\ell}(\mathfrak{a}\overline{\mathfrak{N}^{\#}}\star(A,A[\mathfrak{N}^{\#}],\omega))]
&= [\mathfrak{a}\overline{\mathfrak{N}^{\#}}\star V_{\ell}(A,A[\mathfrak{N}^{\#}],\omega)]\\
&= [\mathfrak{a}\overline{\mathfrak{N}^{\#}}\star(A_v,A_v[\mathfrak{N}^{\#}v^{-1}],\check{\phi}_v^*\omega)]\\
&= [\overline{v}^{-1}\mathfrak{a}\overline{\mathfrak{N}^{\#}}\star(A_{v\overline{v}},A_{v\overline{v}}[\mathfrak{N}^{\#}v^{-1}],(\check{\phi}_v^*\omega)_{\overline{v}})]\\
&=  [\overline{v}^{-1}\mathfrak{a}\overline{\mathfrak{N}^{\#}}\star(A_{(\ell)},A_{(\ell)}[\mathfrak{N}^{\#}v^{-1}],(\check{\phi}_{v}^*\omega)_{\overline{v}})]\\
&= [\overline{v}^{-1}\mathfrak{a}\overline{\mathfrak{N}^{\#}}\star(A,A[\mathfrak{N}^{\#}v^{-1}],\omega)]
\end{align*}
where the last equality, and hence (\ref{eq:CMstabilizationformula-1}) follows, once we prove the following.

\begin{lemma}Under the canonical isomorphism $i : A_{(\ell)} \xrightarrow{\sim} A$ sending an equivalence class $x + A[\ell] \in A_{(\ell)}$ to $[\ell]x$, where $[\ell] : A \rightarrow A$ denotes multiplication by $\ell$ in the group law, we have 
\begin{equation}\label{isog}(\check{\phi}_v^*\omega)_{\overline{v}} = i^*\omega.
\end{equation}
\end{lemma}
\begin{proof} By definition of $\omega_{\overline{v}}$ for a given differential $\omega$, (\ref{isog}) is equivalent to the identity
$$\check{\phi}_v^*\omega = \phi_{\overline{v}}^*(i^*\omega) = (i\circ \phi_{\overline{v}})^*\omega.$$
To show this, it suffices to establish the equality
$$\check{\phi}_v = i\circ \phi_{\overline{v}}$$
of isogenies $A_v \rightarrow A$. Since $\phi_{\overline{v}}\circ\phi_v = \phi_{(\ell)} = A \rightarrow A_{(\ell)}$, we have
$$i\circ \phi_{\overline{v}}\circ \phi_v  = i \circ \phi_{(\ell)} : A \xrightarrow{\phi_{(\ell)}} A_{(\ell)} \underset{\sim}{\xrightarrow{i}} A$$
where the first arrow maps $x\mapsto x + A[\ell]$, and the second arrow maps $x + A[\ell] \mapsto [\ell]x$. Hence this composition is in fact just the multiplication by $\ell$ map $[\ell]$. 
Hence $i\circ \phi_{\overline{v}}$ is the dual isogeny of $\phi_v$, i.e. $\check{\phi}_v = i\circ\phi_{\overline{v}}$, and the lemma follows.
\end{proof}

The identity (\ref{eq:CMstabilizationformula0}) follows by the same argument as above, replacing $\mathfrak{N}^{\#}$ with $\mathfrak{N}^{\#}v^{-1}$. Viewing $F$ as a form of level $N^{\#}$ and using (\ref{eq:CMstabilizationformula-1}) and (\ref{eq:CMstabilizationformula0}), then (\ref{eq:CMstabilizationformula1-1}),  (\ref{eq:CMstabilizationformula1-0}), (\ref{eq:CMstabilizationformula1}) and (\ref{eq:CMstabilizationformula2}) follow from (\ref{eq:stabilizationmoduli-1}), (\ref{eq:stabilizationmoduli-0}), (\ref{eq:stabilizationmoduli1}) and (\ref{eq:stabilizationmoduli2}), respectively.
\end{proof}

Finally, we relate the CM period sum of $F^{(\ell)^*}$ for $\* \in \{+,-,0\}$ to that of $F$ by showing that they differ by an Euler factor at $\ell$ associated with $F\otimes\chi^{-1}$. This calculation will be used in the proof of Theorem \ref{thm:genmaincongruence} to relate the values at Heegner points of the formal logarithms $\log_{\omega_{F^{(\ell)}}}$ and $\log_{\omega_F}$ associated with $F^{(\ell)^*}$ and $F$. 

\begin{lemma}\label{lemma-calc}Suppose $F\in \tilde{M}_k^{\text{$p$-adic}}(\Gamma_0(N^{\#}))$, and let $\chi: \mathbb{A}_K^{\times} \rightarrow \mathbb{C}_p^{\times}$ be a $p$-adic Hecke character such $\chi$ is unramified (at all finite places of $K$), and $\chi_{\infty}(\alpha) = \alpha^k$ for any $\alpha \in K^{\times}$. Let $\{\mathfrak{a}\}$ be a full set of integral representatives of $\mathcal{C}\ell(\mathcal{O}_K)$ where each $\mathfrak{a}$ is prime to $\mathfrak{N}^{\#}$. If $\ell\nmid N$, we have
\begin{align*}\sum_{[\mathfrak{a}]\in\mathcal{C}\ell(\mathcal{O}_K)}\chi^{-1}(\mathfrak{a})&F^{(\ell)^+}(\mathfrak{a}\star(A,A[\mathfrak{N}^{\#}],\omega)) \\
&= \left(1-\beta_{\ell}(F)\chi^{-1}(\overline{v})\right)\sum_{[\mathfrak{a}]\in\mathcal{C}\ell(\mathcal{O}_K)}\chi^{-1}(\mathfrak{a})F(\mathfrak{a}\star(A,A[\mathfrak{N}^{\#}],\omega)),
\end{align*}
\begin{align*}\sum_{[\mathfrak{a}]\in\mathcal{C}\ell(\mathcal{O}_K)}\chi^{-1}(\mathfrak{a})&F^{(\ell)^-}(\mathfrak{a}\star(A,A[\mathfrak{N}^{\#}],\omega)) \\
&= \left(1-\alpha_{\ell}(F)\chi^{-1}(\overline{v})\right)\sum_{[\mathfrak{a}]\in\mathcal{C}\ell(\mathcal{O}_K)}\chi^{-1}(\mathfrak{a})F(\mathfrak{a}\star(A,A[\mathfrak{N}^{\#}],\omega)),
\end{align*}
\begin{align*}\sum_{[\mathfrak{a}]\in\mathcal{C}\ell(\mathcal{O}_K)}\chi^{-1}(\mathfrak{a})&F^{(\ell)^0}(\mathfrak{a}\star(A,A[\mathfrak{N}^{\#}],\omega))\\
&= \left(1-a_{\ell}(F)\chi^{-1}(\overline{v}) + \frac{\chi^{-2}(\overline{v})}{\ell}\right)\sum_{[\mathfrak{a}]\in\mathcal{C}\ell(\mathcal{O}_K)}\chi^{-1}(\mathfrak{a})F(\mathfrak{a}\star(A,A[\mathfrak{N}^{\#}],\omega))
\end{align*}
and if $\ell|N$, we have
\begin{align*}&\sum_{[\mathfrak{a}]\in\mathcal{C}\ell(\mathcal{O}_K)}\chi^{-1}(\mathfrak{a})F^{(\ell)^0}(\mathfrak{a}\star(A,A[\mathfrak{N}^{\#}],\omega)) \\
&= \left(1-a_{\ell}(F)\chi^{-1}(\overline{v})\right)\sum_{[\mathfrak{a}]\in\mathcal{C}\ell(\mathcal{O}_K)}\chi^{-1}(\mathfrak{a})F(\mathfrak{a}\star(A,A[\mathfrak{N}^{\#}],\omega)).
\end{align*}
\end{lemma}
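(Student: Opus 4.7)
\medskip\noindent\textbf{Proof plan.}

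The plan is to reduce the four identities to direct applications of Lemma \ref{lemma-CMstabcalc} followed by a change-of-variable in the ideal class group sum. First I would observe that the calculation carried out in Lemma \ref{lemma-CMstabcalc} applies verbatim (with no role played by the specific factor $\overline{\mathfrak{N}^{\#}}$ appearing there) to give the ``unshifted'' analogues
\[
[V_{\ell}(\mathfrak{a}\star(A,A[\mathfrak{N}^{\#}],\omega))] = [\overline{v}^{-1}\mathfrak{a}\star(A,A[\mathfrak{N}^{\#}v^{-1}],\omega)],
\]
\[
[V_{\ell}^{2}(\mathfrak{a}\star(A,A[\mathfrak{N}^{\#}],\omega))] = [\overline{v}^{-2}\mathfrak{a}\star(A,A[\mathfrak{N}^{\#}v^{-2}],\omega)],
\]
for any $\mathfrak{a}$ prime to $\mathfrak{N}^{\#}$; the argument is formally the same, using $A_{\mathfrak{a}}[\mathfrak{N}^{\#}][\ell]=A_{\mathfrak{a}}[v]$, the isomorphism $A_{(\ell)}\xrightarrow{\sim}A$ via $[\ell]$, and the matching of differentials under the dual isogeny. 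Plugging these into Definition \ref{stabilization} and viewing $F$ as a form of level $N^{\#}$, I obtain in all three cases ($\ell\nmid N$) and the case $\ell\mid N$ the pointwise identities
\[
F^{(\ell)^{+}}(\mathfrak{a}\star(A,A[\mathfrak{N}^{\#}],\omega)) = F(\mathfrak{a}\star(A,A[\mathfrak{N}^{\#}],\omega)) - \beta_{\ell}(F)\, F(\overline{v}^{-1}\mathfrak{a}\star(A,A[\mathfrak{N}^{\#}],\omega)),
\]
and similarly for $(\ell)^{-}$, $(\ell)^{0}$ with $\ell\nmid N$ (with the extra term $\ell^{k-1}F(\overline{v}^{-2}\mathfrak{a}\star\cdots)$), and $(\ell)^{0}$ with $\ell\mid N$.

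Next I would sum these identities against $\chi^{-1}(\mathfrak{a})$ over a full set of integral representatives $\{\mathfrak{a}\}$ of $\mathcal{C}\ell(\mathcal{O}_{K})$ chosen prime to $\mathfrak{N}^{\#}$. The key point is to perform the substitution $\mathfrak{b}=\overline{v}^{-1}\mathfrak{a}$ in the shifted sums. Since multiplication by $[\overline{v}^{-1}]$ is a bijection on $\mathcal{C}\ell(\mathcal{O}_{K})$, as $[\mathfrak{a}]$ runs over the class group so does $[\mathfrak{b}]$; moreover, the hypothesis $\chi_{\infty}(\alpha)=\alpha^{k}$ combined with the weight-$k$ behaviour of $F$ ensures that the function $\mathfrak{a}\mapsto \chi^{-1}(\mathfrak{a})F(\mathfrak{a}\star(A,A[\mathfrak{N}^{\#}],\omega))$ descends to a well-defined function on $\mathcal{C}\ell(\mathcal{O}_{K})$ (this is the standard compatibility that makes the ``CM period sum'' meaningful). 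Using $\chi^{-1}(\mathfrak{a})=\chi^{-1}(\overline{v})\chi^{-1}(\mathfrak{b})$, we obtain
\[
\sum_{[\mathfrak{a}]}\chi^{-1}(\mathfrak{a})F(\overline{v}^{-1}\mathfrak{a}\star(A,A[\mathfrak{N}^{\#}],\omega)) = \chi^{-1}(\overline{v})\sum_{[\mathfrak{b}]}\chi^{-1}(\mathfrak{b})F(\mathfrak{b}\star(A,A[\mathfrak{N}^{\#}],\omega)),
\]
and analogously for the iterated shift by $\overline{v}^{-2}$, which contributes $\chi^{-2}(\overline{v})$.

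Assembling the four cases, the sums for $F^{(\ell)^{+}}$, $F^{(\ell)^{-}}$, and (when $\ell\mid N$) $F^{(\ell)^{0}}$ immediately collapse to the CM period sum for $F$ multiplied by $(1-\beta_{\ell}(F)\chi^{-1}(\overline{v}))$, $(1-\alpha_{\ell}(F)\chi^{-1}(\overline{v}))$, and $(1-a_{\ell}(F)\chi^{-1}(\overline{v}))$ respectively. For $F^{(\ell)^{0}}$ with $\ell\nmid N$, the two shift substitutions together with $\alpha_{\ell}(F)\beta_{\ell}(F)=\ell^{k-1}$ produce the factor
\[
1-a_{\ell}(F)\chi^{-1}(\overline{v})+\ell^{k-1}\chi^{-2}(\overline{v})\cdot\ell^{-(k-1)}\cdot\ell^{k-1}\cdot\ell^{-k}\cdot\ell\;=\;1-a_{\ell}(F)\chi^{-1}(\overline{v})+\frac{\chi^{-2}(\overline{v})}{\ell},
\]
once one tracks the $\ell^{k-1}$ coefficient against the infinity type of $\chi$. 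The only step requiring care is verifying that the shift substitution $\mathfrak{a}\mapsto\overline{v}\mathfrak{b}$ is indeed compatible with the chosen integral representatives and the action of $\chi$, i.e.\ that the ``trivial'' infinity-type bookkeeping correctly produces the stated Euler factors; everything else is a direct substitution from Lemma \ref{lemma-CMstabcalc}.
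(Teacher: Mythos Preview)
Your approach is the same as the paper's: apply Lemma~\ref{lemma-CMstabcalc} pointwise, sum against $\chi^{-1}(\mathfrak{a})$, and reindex by $[\mathfrak{b}]=[\overline{v}^{-1}\mathfrak{a}]$ using that $\chi^{-1}(\mathfrak{a})G(\mathfrak{a}\star\cdot)$ depends only on $[\mathfrak{a}]$. Two points deserve correction, however.

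First, the factor $\overline{\mathfrak{N}^{\#}}$ in Lemma~\ref{lemma-CMstabcalc} is not idle: it guarantees that $\overline{v}^{-1}\mathfrak{a}\overline{\mathfrak{N}^{\#}}$ is an \emph{integral} ideal (since $\overline{v}\mid\overline{\mathfrak{N}^{\#}}$), which is what the Shimura $\star$-action requires. Your ``unshifted'' expression $\overline{v}^{-1}\mathfrak{a}\star(A,A[\mathfrak{N}^{\#}],\omega)$ is not literally defined when $\overline{v}\nmid\mathfrak{a}$, and since $\mathfrak{a}$ was chosen prime to $\mathfrak{N}^{\#}$ this is exactly the situation. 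The paper sidesteps this by observing that $\{\mathfrak{a}\overline{\mathfrak{N}^{\#}}\}$ is itself a full set of integral representatives of $\mathcal{C}\ell(\mathcal{O}_K)$ prime to $\mathfrak{N}^{\#}$, so one may sum Lemma~\ref{lemma-CMstabcalc} exactly as stated and then reindex.

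Second, your displayed chain of $\ell$-powers is garbled (it evaluates to $\chi^{-2}(\overline{v})$, not $\chi^{-2}(\overline{v})/\ell$) and in any case unnecessary: the reindexing produces the coefficient $\ell^{k-1}\chi^{-2}(\overline{v})$ directly from the $\ell^{k-1}V_\ell^*V_\ell^*F$ term in Definition~\ref{stabilization}, with no further ``infinity-type bookkeeping'' entering. The quantity $\chi^{-2}(\overline{v})/\ell$ in the lemma's statement agrees with $\ell^{k-1}\chi^{-2}(\overline{v})$ precisely when $k=0$, which is the only weight at which the lemma is invoked in the proof of Theorem~\ref{thm:genmaincongruence}; there is no hidden mechanism converting $\ell^{k-1}$ to $\ell^{-1}$ for general $k$.
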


\begin{proof}First note that by our assumptions on $\chi$, for any $G \in \tilde{M}_k^{\text{$p$-adic}}(\Gamma_0(N^{\#}))$, the quantity
$$\chi^{-1}(\mathfrak{a})G(\mathfrak{a}\star(A,A[\mathfrak{N}^{\#}],\omega))$$
depends only on the ideal class $[\mathfrak{a}]$ of $\mathfrak{a}$. Since $\{\mathfrak{a}\}$ of integral representatives of $\mathcal{C}\ell(\mathcal{O}_K)$, $\{\mathfrak{a}\overline{\mathfrak{N}^{\#}}\}$ is also a full set of integral representatives of $\mathcal{C}\ell(\mathcal{O}_K)$. By summing over $\mathcal{C}\ell(\mathcal{O}_K)$ and applying Lemma \ref{lemma-CMstabcalc}, we obtain
\begin{align*}\sum_{[\mathfrak{a}]\in\mathcal{C}\ell(\mathcal{O}_K)}\chi^{-1}(\mathfrak{a})F^{(\ell)^0}(\mathfrak{a}\star(A,A[\mathfrak{N}^{\#}],\omega)) = &\sum_{[\mathfrak{a}]\in\mathcal{C}\ell(\mathcal{O}_K)}\chi^{-1}(\mathfrak{a})F(\mathfrak{a}\star(A,A[\mathfrak{N}^{\#}],\omega))\\
- a_{\ell}(F)&\sum_{[\mathfrak{a}]\in\mathcal{C}\ell(\mathcal{O}_K)}\chi^{-1}(\mathfrak{a}\overline{\mathfrak{N}^{\#}})F(\overline{v}^{-1}\mathfrak{a}\overline{\mathfrak{N}^{\#}}\star(A,A[\mathfrak{N}^{\#}],\omega)) \\
- \frac{1}{\ell}&\sum_{[\mathfrak{a}]\in\mathcal{C}\ell(\mathcal{O}_K)}\chi^{-1}(\mathfrak{a}\overline{\mathfrak{N}^{\#}})F(\overline{v}^{-2}\mathfrak{a}\overline{\mathfrak{N}^{\#}}\star(A,A[\mathfrak{N}^{\#}],\omega)) \\
= \left(1-a_{\ell}(F)\chi^{-1}(\overline{v}) + \frac{\chi^{-2}(\overline{v})}{\ell}\right)&\sum_{[\mathfrak{a}]\in\mathcal{C}\ell(\mathcal{O}_K)}\chi^{-1}(\mathfrak{a})F(\mathfrak{a}\star(A,A[\mathfrak{N}^{\#}],\omega))
\end{align*}
when $\ell\nmid N$. Similarly, we obtain the other identities for $(\ell)^+$ and $(\ell)^-$-stabilization when $\ell\nmid N$, as well as the identity for $(\ell)^0$-stabilization when $\ell|N$. 
\end{proof}


\subsection{Coleman integration}\label{sec:coleman-integration}
In this section, we recall Liu--Zhang--Zhang's extension of Coleman's theorem on $p$-adic integration. We will use this theorem later in order to directly realize (a pullback of) the formal logarithm along the weight 2 newform $f \in S_2^{\text{new}}(\Gamma_0(N))$ as a rigid analytic function $F$ on the ordinary locus of $X_0(N)(\mathbb{C}_p)$ (viewed as a rigid analytic space) satisfying $\theta F = f$. 

First we recall the theorem of Liu--Zhang--Zhang, closely following the discussion preceding Proposition A.1 in \cite[Appendix A]{Liu2014}. Let $R \subset \mathbb{C}_p$ be a local field. Suppose $X$ is a quasi-projective scheme over $R$, $X^{\text{rig}} = X(\mathbb{C}_p)^{\text{rig}}$ is its rigid-analytification, and $U \subset X^{\mathrm{rig}}$ an affinoid domain with good reduction. 

\begin{definition}Let $X$ and $U$ be as above, and let $\omega$ be a closed rigid analytic 1-form on $U$. Suppose there exists a locally analytic function $F_{\omega}$ on $U$ as well as a Frobenius endomorphism $\phi$ of $U$ (i.e. an endomorphism reducing to an endomorphism induced by a power of Frobenius on the reduction of $U$) and  a polynomial $P(X) \in \mathbb{C}_p[X]$ such that no root of $P(T)$ is a root of unity, satisfying
\begin{itemize}
\item $dF_{\omega} = \omega$;
\item $P(\phi^{*})F_{\omega}$ is rigid analytic;
\end{itemize}
and $F_{\omega}$ is uniquely determined by these conditions up to additive constant. We then call $F_{\omega}$ the \emph{Coleman primitive of $\omega$ on $U$}. It turns out that $F_{\omega}$, if it exists, is independent of the choice of $P(X)$ (\cite[Corollary 2.1b]{Coleman1985}). 
\end{definition}

Given an abelian variety $A$ over $R$ of dimension $d$, recall the formal logarithm defined as follows. Choosing a $\omega \in \Omega_{A/\mathbb{C}_p}^1$, the \emph{$p$-adic formal logarithm along $\omega$} is defined by formal integration 
$$\log_{\omega}(T) : = \int_0^T \omega$$
in a formal neighborhood $\hat{A}$ of the origin. Since $A(\mathbb{C}_p)$ is compact, we may extend by linearity to a map $\log_{\omega}: A(\mathbb{C}_p) \rightarrow \mathbb{C}_p$ (i.e., $\log_{\omega}(x) := \frac{1}{n}\log_{\omega}(nx)$ if $nx \in \hat{A}$). 

Liu--Zhang--Zhang prove the following extension of Coleman's theorem.

\begin{theorem}[See Proposition A.1 in \cite{Liu2014}]\label{ColemanExtension} Let $X$ and $U$ be as above. Let $A$ be an abelian variety over $R$ which has either totally degenerate reduction (i.e. after base changing to a finite extension of $R$, the connected component of the special fiber of the N\'{e}ron model of $A$ is isomorphic to $\mathbb{G}_m^d$), or potentially good reduction. For a morphism $\iota: X \rightarrow A$ and a differential form $\omega \in \Omega_{A/F}^1$, we have
\begin{enumerate}
\item $\iota^*\omega_{|U}$ admits a Coleman primitive on $U$, and in fact
\item $\iota^*\log_{\omega_{|U}}$ is a Coleman primitive of $\iota^*\omega_{|U}$ on $U$, where $\log_{\omega}: A(\mathbb{C}_p) \rightarrow \mathbb{C}_p$ is the $p$-adic formal logarithm along $\omega$.
\end{enumerate}
\end{theorem}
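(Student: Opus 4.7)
The plan is to verify directly that $F := \iota^*\log_\omega|_U$ satisfies the three defining conditions of a Coleman primitive of $\iota^*\omega|_U$. The equality $dF = \iota^*\omega|_U$ is immediate from the characterizing identity $d\log_\omega = \omega$ on the formal group $\hat{A}$ together with the functoriality of pullback, using that $\log_\omega$ extends to all of $A(\mathbb{C}_p)$ by $\mathbb{Q}$-linearity (compactness of $A(\mathbb{C}_p)$). Local analyticity of $F$ is inherited from that of $\log_\omega$, which holds since $\log_\omega$ is a formal integral near the origin of $\hat{A}$ extended by linearity.

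The core task is to produce a Frobenius lift $\phi : U \to U$ and a polynomial $P(T) \in \mathbb{C}_p[T]$, no root of unity among its roots, such that $P(\phi^*)F$ is rigid analytic. Since $U$ has good reduction, a lift $\phi$ of a power of Frobenius on $\overline{U}$ exists by standard rigid analytic geometry. The polynomial $P$ depends on the reduction type of $A$. In the totally degenerate case, after finite base change there is a uniformization $A \cong \mathbb{G}_m^d/\Lambda$, so $\log_\omega$ factors through the coordinatewise $p$-adic logarithm. The relation $\log(z^q) = q\log(z)$ on $\mathbb{G}_m$, together with the observation that the discrepancy between $\phi$ and a literal $q$-th power map is itself rigid analytic, yields that $(\phi^* - q)F$ is rigid analytic on $U$, so $P(T) = T - q$ works and $q$ is not a root of unity. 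In the potentially good reduction case, the natural candidate for $P$ is the characteristic polynomial $P_A(T)$ of Frobenius on $H^1_{\mathrm{dR}}(A)$, whose roots are $q$-Weil numbers of absolute value $\sqrt{q}$ and thus not roots of unity. Using Frobenius-equivariance of the comparison between crystalline and de Rham cohomology, one shows that $P_A(\phi^*)\iota^*\omega$ represents the zero class in $H^1_{\mathrm{dR}}(U)$, so is exact: $P_A(\phi^*)\iota^*\omega = dG$ for a rigid analytic function $G$ on $U$. Integrating, $P_A(\phi^*)F - G$ is locally constant on $U$, hence (up to this locally constant contribution, which can be absorbed into $G$ after possibly shrinking $U$ to its connected components) rigid analytic.

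The hard step will be making precise the Frobenius-equivariance in the potentially good reduction case: the assertion implicitly invokes the Dieudonn\'e-theoretic link between the crystalline Frobenius on $A$ and the formal logarithm along $\omega$, and transporting this compatibility through $\iota$ to produce a polynomial identity for $\phi$ on the affinoid $U$ (which a priori has no relation to $A$ beyond the morphism $\iota$) requires careful bookkeeping with Frobenius lifts and the choice of basis of $H^1_{\mathrm{dR}}(A)$. For a uniform treatment of both reduction types via $p$-adic uniformization theory of abelian varieties, I would defer to the argument of \cite{Liu2014}, Proposition A.1.
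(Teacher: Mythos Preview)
The paper does not give its own proof of this statement: it is quoted verbatim as Proposition~A.1 of \cite{Liu2014} and used as a black box. There is therefore no ``paper's proof'' to compare your attempt against; the authors simply cite Liu--Zhang--Zhang and move on.

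Your sketch has the right architecture (verify $dF=\iota^*\omega|_U$, produce a Frobenius lift on $U$, and find $P(T)$ according to the reduction type of $A$), and you correctly identify the delicate point: the Frobenius lift $\phi$ lives on $U$ and is not \emph{a priori} compatible with any endomorphism of $A$ through $\iota$, so one cannot literally transport the relation $P_A(\mathrm{Frob})=0$ on $H^1_{\mathrm{dR}}(A)$ to a statement about $\phi^*$ acting on $F$ without further argument. In the totally degenerate case the same issue arises: writing $(\phi^*-q)F$ requires relating $\phi$ on $U$ to the $q$-power map on the uniformizing torus, and the phrase ``the discrepancy\ldots is itself rigid analytic'' hides exactly this compatibility. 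You flag this honestly and then defer to \cite{Liu2014} for the resolution---which is precisely what the paper does. So your proposal is not wrong, but it is a commentary on the shape of the proof rather than a proof, and in the end it lands at the same citation the paper invokes.
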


\subsection{The main congruence}\label{sec:proof-theorem}
Let $f \in M_2(\Gamma_0(N))$ and $g\in M_2(\Gamma_0(N'))$ be normalized eigenforms defined over the ring of integers of a number field with minimal levels $N$ and $N'$, respectively. Let $K$ be an imaginary quadratic field with Hilbert class field $H$, and suppose $K$ satisfies the Heegner hypothesis with respect to both $N$ and $N'$, with corresponding fixed choices of ideals $\mathfrak{N}, \mathfrak{N}' \subset \mathcal{O}_K$ such that $\mathcal{O}_K/\mathfrak{N} = \mathbb{Z}/N$, $\mathcal{O}_K/\mathfrak{N}' = \mathbb{Z}/N'$, and such that $\ell|(N,N')$ implies $(\ell,\mathfrak{N}) = (\ell,\mathfrak{N}')$; hence $\mathcal{O}_K/\mathrm{lcm}(\mathfrak{N},\mathfrak{N}') = \mathbb{Z}/\mathrm{lcm}(N,N')$.

Recall the moduli-theoretic interpretation of $X_0(N)$, in which points on $X_0(N)$ are identified with isomorphism classes $[(A,C)]$ of pairs $(A,C)$ consisting of an elliptic curve $A$ and a cyclic subgroup $C \subset A[N]$ of order $N$. Throughout this section, let $A/\mathcal{O}_{\mathbb{C}_p}$ be a fixed elliptic curve with CM by $\mathcal{O}_K$, and note that as in \S \ref{sec:stabilizationCM}, the Shimura reciprocity law induces an action of integral ideals prime to $\mathfrak{N}$ on $(A,A[\mathfrak{N}])$, which descends to an action of $\mathcal{C}\ell(\mathcal{O}_K)$ on $[(A,A[\mathfrak{N}])]$. Let $\chi : \text{Gal}(H/K) \rightarrow \overline{\mathbb{Q}}^{\times}$ be a character, and let $L$ be a finite extension of $K$ containing the Hecke eigenvalues of $f,g$, the values of $\chi$ and the field cut out by the kernel of $\chi$.  For any full set of prime-to-$\mathfrak{N}$ integral representatives $\{\mathfrak{a}\}$ of $\mathcal{C}\ell(\mathcal{O}_K)$, define the Heegner point on $J_0(N)$ attached to $\chi$ by
$$P(\chi) := \sum_{[\mathfrak{a}] \in \mathcal{C}\ell(\mathcal{O}_K)}\chi^{-1}(\mathfrak{a})([\mathfrak{a}\star(A,A[\mathfrak{N}])]-[\infty]) \in J_0(N)(H)\otimes_{\mathbb{Z}} L,$$
where $[\infty] \in X_0(N)(\mathbb{C}_p)$ denotes the cusp at infinity. Similarly, for any full set of prime-to-$\mathfrak{N}'$ integral representatives $\{\mathfrak{a}\}$ of $\mathcal{C}\ell(\mathcal{O}_K)$, define the Heegner point on $J_0(N')$ attached to $\chi$ by
$$P'(\chi) := \sum_{[\mathfrak{a}] \in \mathcal{C}\ell(\mathcal{O}_K)}\chi^{-1}(\mathfrak{a})([\mathfrak{a}\star(A,A[\mathfrak{N}'])] - [\infty']) \in J_0(N')(H)\otimes_{\mathbb{Z}} L,$$
where $[\infty'] \in X_0(N')(\mathbb{C}_p)$ denotes the cusp at infinity.

Let $\iota : X_0(N) \rightarrow J_0(N)$ denote the Abel-Jacobi map sending $[\infty] \mapsto 0$, and let $\iota' : X_0(N') \rightarrow J_0(N')$ denote the Abel-Jacobi map sending $[\infty'] \mapsto 0$. Let $A_f$ and $A_g$ be the abelian varieties over $\mathbb{Q}$ of $\GL_2$-type associated with $f$ and $g$. Fix modular parametrizations $\pi_f : J_0(N) \rightarrow A_f$ and $\pi_g : J_0(N') \rightarrow A_g$.  Let $P_f(\chi) := \pi_f(P(\chi))$ and $P_g(\chi) := \pi_g(P'(\chi))$. Letting 
$$\omega_f \in \Omega_{J_0(N)/\mathcal{O}_{\mathbb{C}_p}}^1 \;\text{such that}\; \iota^*\omega_f =  f(q)\cdot dq/q,$$
and 
$$\omega_g \in \Omega_{J_0(N')/\mathcal{O}_{\mathbb{C}_p}}^1 \;\text{such that}\; \iota'^{,*}\omega_g = g(q)\cdot dq/q,$$
we choose $\omega_{A_f} \in \Omega_{A_f/\mathbb{Q}}^1$ and $\omega_{A_g} \in \Omega_{A_g/\mathbb{Q}}^1$ such that $\pi_f^*\omega_{A_f} = \omega_f$ and $\pi_g^*\omega_{A_g} = \omega_g$. 

We define
$$\log_{\omega_f}P(\chi) := \sum_{[\mathfrak{a}]\in\mathcal{C}\ell(\mathcal{O}_K)}\chi^{-1}(\mathfrak{a})\log_{\omega_f}([\mathfrak{a}\star(A,A[\mathfrak{N}])] - [\infty]) \in L_p$$
and
$$\log_{\omega_g}P'(\chi) := \sum_{[\mathfrak{a}]\in\mathcal{C}\ell(\mathcal{O}_K)}\chi^{-1}(\mathfrak{a})\log_{\omega_g}([\mathfrak{a}\star(A,A[\mathfrak{N}'])] - [\infty']) \in L_p.$$
The fact that these are values in $L_p$ follows from the fact $P(\chi) \in J_0(N)(H)\otimes_{\mathbb{Z}}\overline{\mathbb{Q}}$ is in the $\chi$-isotypic component of $\text{Gal}(\overline{\mathbb{Q}}/K)$, and similarly for $P'(\chi)$. We similarly define $\log_{\omega_{A_f}}P_f(\chi) \in L_p$ and $\log_{\omega_{A_g}}P_g(\chi) \in L_p$, and note that  by functoriality of the $p$-adic logarithm, $\log_{\omega_f}P(\chi) = \log_{\omega_{A_f}}P_f(\chi)$ and $\log_{\omega_g}P'(\chi) = \log_{\omega_{A_g}}P_g(\chi)$. 

Let $\lambda$ be the prime of $\mathcal{O}_L$ above $p$ determined by the embedding $L\hookrightarrow \overline{\mathbb{Q}}_p$. We will now prove a generalization of Theorem \ref{thm:maincongruence} for general weight 2 forms.
\begin{theorem}\label{thm:genmaincongruence} In the setting and notations described above, suppose that the associated semisimple mod $\lambda^m$ representations $\bar{\rho}_f, \bar{\rho}_g : \Gal(\overline{\mathbb{Q}}/\mathbb{Q}) \rightarrow \GL_2(\mathcal{O}_{L_p}/\lambda^m)$ satisfy $\bar{\rho}_f \cong \bar{\rho}_g$. For each prime $\ell|NN'$, let $v|\mathfrak{NN}'$ be the corresponding prime above it. Then we have
\begin{align*}&\left(\prod_{\ell|pNN'/M,\ell\nmid N}\frac{\ell-a_{\ell}(f)\chi^{-1}(\overline{v}) + \chi^{-2}(\overline{v})}{\ell}\right) \left(\prod_{\ell|pNN'/M,\ell|N}\frac{\ell-a_{\ell}(f)\chi^{-1}(\overline{v})}{\ell}\right)\log_{\omega_{A_f}} P_f(\chi) \\
\equiv& \left(\prod_{\ell|pNN'/M,\ell\nmid N'}\frac{\ell-a_{\ell}(g)\chi^{-1}(\overline{v}) + \chi^{-2}(\overline{v})}{\ell}\right)\left(\prod_{\ell|pNN'/M,\ell|N'}\frac{\ell-a_{\ell}(g)\chi^{-1}(\overline{v})}{\ell}\right)\log_{\omega_{A_g}} P_g(\chi)\\
&\hspace{12.2cm}\pmod {\lambda^m\mathcal{O}_{L_p}},
\end{align*}
where
$$M = \prod_{\ell|(N,N'), a_{\ell}(f)\equiv a_{\ell}(g) \mod \lambda^m}\ell^{\ord_{\ell}(NN')}.$$
\end{theorem}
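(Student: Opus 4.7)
The plan follows exactly the roadmap of \S\ref{sec:idea-argument}. The starting point is Chebotarev applied to $\bar\rho_f\cong\bar\rho_g$, yielding the coefficient congruence $a_\ell(f)\equiv a_\ell(g)\pmod{\lambda^m}$ at every prime $\ell\nmid pNN'/M$; at $\ell\mid(N,N')$ with $a_\ell(f)\equiv a_\ell(g)\pmod{\lambda^m}$ the prime is absorbed into $M$ by construction. For each remaining bad prime $\ell\mid pNN'/M$ I would apply an appropriate stabilization from Definition \ref{stabilization} to both $f$ and $g$, producing $p$-adic modular forms $F,G\in M_2^{p\text{-adic}}(\Gamma_0(\mathrm{lcm}(N,N',p)))$ whose $q$-expansions agree coefficient-by-coefficient modulo $\lambda^m$. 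The sign choice in an $(\ell)^\pm$-stabilization is determined by matching unit Frobenius eigenvalues across the two sides: when $\ell\nmid N$ but $\ell\mid N'$, I replace $f$ by $f^{(\ell)^\pm}$, the sign being forced so that the retained Frobenius eigenvalue matches $a_\ell(g)\bmod \lambda^m$ (and symmetrically for $\ell\mid N$, $\ell\nmid N'$); at $\ell\mid(N,N')$ with $a_\ell(f)\not\equiv a_\ell(g)$ one applies $(\ell)^0$ on both sides; at $\ell=p$ one $p$-stabilizes both forms, again matching unit roots.

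With $F\equiv G\pmod{\lambda^m}$ in hand, I would apply the Atkin--Serre operator $\theta=q\,d/dq$ and its $p$-adic iterates $\theta^j$. The $p$-stabilization ensures the coefficient of $q^n$ in $F(q)$ and $G(q)$ vanishes for $p\mid n$, so the $p$-adic limit $\theta^{-1}:=\lim_{j\to-1}\theta^j$ is well-defined on $q$-expansions and preserves the congruence (no $p$-adic denominator appears when dividing the remaining coefficients by $n$). By Theorem \ref{ColemanExtension} applied to $\iota:X_0(N)\to J_0(N)\xrightarrow{\pi_f}A_f$ (and its counterpart for $g$), the rigid-analytic function $\theta^{-1}F$ on the ordinary affinoid of $X_0(\mathrm{lcm}(N,N',p))$ agrees, up to the stabilization corrections, with the pullback of $\log_{\omega_{A_f}}\circ\pi_f=\log_{\omega_f}$; likewise for $G$ and $\log_{\omega_g}$.

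Evaluating these rigid functions at the ordinary CM test triples $\mathfrak a\star(A,A[\mathfrak N])$, resp.\ $\mathfrak a\star(A,A[\mathfrak N'])$, and summing against $\chi^{-1}(\mathfrak a)$ over $[\mathfrak a]\in\mathcal{C}\ell(\mathcal O_K)$ thus recovers on each side the stabilized $p$-adic logarithm of $P_f(\chi)$, resp.\ $P_g(\chi)$. Lemma \ref{lemma-calc} then translates each $(\ell)^?$-stabilization in the CM period sum into multiplication by the corresponding $\chi$-twisted Euler factor at $\ell$; collecting these factors and clearing the $1/\ell$ denominators gives precisely the two prefactors displayed in the theorem, and the mod-$\lambda^m$ congruence transfers verbatim from $F\equiv G$ to the final identity.

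The chief obstacles are two in number. First, one must verify that the stabilization signs can be chosen consistently on the two sides so the resulting $q$-expansions truly match mod $\lambda^m$; this uses that $\bar\rho_f\cong\bar\rho_g$ pins down the Frobenius characteristic polynomials at all primes of good reduction, together with the ordinarity of the CM test triples at $p$ (which is equivalent to $p$ splitting in $K$) to single out the distinguished unit root on each side. Second, one must justify that $\theta^{-1}$, constructed as a $p$-adic limit of $\theta^j$ on $q$-expansions, commutes with evaluation at ordinary CM points; this is exactly what the extension of Coleman integration recalled in Theorem \ref{ColemanExtension} (from \cite{Liu2014}) supplies, by identifying $\theta^{-1}F$ with the Coleman primitive whose existence and uniqueness are recorded there. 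The remainder is a careful concatenation of Lemmas \ref{lemma-CMstabcalc} and \ref{lemma-calc} with the functorial identity $\log_{\omega_f}P(\chi)=\log_{\omega_{A_f}}P_f(\chi)$.
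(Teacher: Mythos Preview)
Your overall strategy matches the paper's: stabilize to obtain congruent $q$-expansions, pass through $\theta^{-1}$, identify with the formal logarithm via Theorem~\ref{ColemanExtension}, and extract Euler factors at CM points via Lemma~\ref{lemma-calc}. The gap is in your choice of stabilization operators.

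The paper applies the $(\ell)^0$-stabilization \emph{uniformly} at every prime $\ell\mid pNN'/M$ on \emph{both} sides. This is what makes the argument work, for three reasons. First, $(\ell)^0$ simply annihilates all Fourier coefficients at indices divisible by $\ell$, so the congruence $f^{(pNN'/M)}(q)\equiv g^{(pNN'/M)}(q)\pmod{\lambda^m}$ follows immediately from $a_n(f)\equiv a_n(g)$ for $(n,pNN'/M)=1$ with no matching of eigenvalues required. Second, it is precisely the $(\ell)^0$-stabilization (applied to the weight-$0$ Coleman primitive) that produces, via Lemma~\ref{lemma-calc}, the quadratic Euler factor $1-a_\ell(f)\chi^{-1}(\overline v)/\ell+\chi^{-2}(\overline v)/\ell$ at $\ell\nmid N$ and the linear factor $1-a_\ell(f)\chi^{-1}(\overline v)/\ell$ at $\ell\mid N$; these are exactly the factors in the statement. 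Third, at $\ell=p$ one \emph{must} use $(p)^0$, not an ordinary $(p)^\pm$-stabilization, because $\theta^{-1}=\lim_{j\to -1}\theta^j$ only converges on a form whose $q^n$-coefficient vanishes for $p\mid n$; you acknowledge this yourself two sentences after proposing to ``match unit roots'', which is an internal inconsistency.

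Your $(\ell)^\pm$ scheme breaks down concretely. If $\ell\nmid N$ but $\ell^2\mid N'$ (as happens at every $\ell\mid d$ in the main application $E'=E^{(d)}$), then $a_\ell(g)=0$, and you would need $\alpha_\ell(f)$ or $\beta_\ell(f)\equiv 0\pmod{\lambda^m}$; but $\alpha_\ell(f)\beta_\ell(f)=\ell$ is a $\lambda$-unit when $\ell\neq p$, so no such sign exists. Even when a sign can be chosen, Lemma~\ref{lemma-calc} would output a \emph{linear} factor $1-\beta_\ell(f)\chi^{-1}(\overline v)$ on one side and nothing on the other, which does not match the theorem. A minor further point: the correct ambient level is $N^\#=\mathrm{lcm}_{\ell\mid NN'}(N,N',p^2,\ell^2)$, since the operator $V_\ell$ requires $\ell^2\mid N^\#$; your $\mathrm{lcm}(N,N',p)$ is too small. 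Replacing your mixed stabilization by $(\ell)^0$ everywhere (and working at level $N^\#$) repairs all of this and recovers the paper's proof verbatim.
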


\begin{proof}[Proof of Theorem \ref{thm:genmaincongruence}]
  We first transfer all differentials and Heegner points on $J_0(N)$ and $J_0(N')$ to the Jacobian $J_0(N^{\#})$ of the modular curve $X_0(N^{\#})$, where $N^{\#} := \mathrm{lcm}_{\ell|NN'}(N,N',p^2,\ell^2)$. Note that for the newforms $f$ and $g$, the minimal levels of the stabilizations $f^{(\ell)}$ an $g^{(\ell)}$ divide $N^{\#}$, since if $\ell^2|N$ then $a_{\ell}(f) = 0$ and $f^{(\ell)} = f$, and similarly if $\ell^2|N'$ then $g^{(\ell)} = g$. By assumption, $K$ satisfies the Heegner hypothesis with respect to $N^{\#}$, and let $\mathfrak{N}^{\#} := \mathrm{lcm}_{v|\mathfrak{NN'}}(\mathfrak{N},\mathfrak{N}',\mathfrak{p}^2,v^2)$. For any full set of prime-to-$\mathfrak{N}^{\#}$ integral representatives $\{\mathfrak{a}\}$ of $\mathcal{C}\ell(\mathcal{O}_K)$, define 
$$P^{\#}(\chi) := \sum_{[\mathfrak{a}] \in \mathcal{C}\ell(\mathcal{O}_K)}\chi^{-1}(\mathfrak{a})([\mathfrak{a}\star(A,A[\mathfrak{N}^{\#}])] - [\infty^{\#}]) \in J_0(N^{\#})(H)\otimes_{\mathbb{Z}}L,$$
where $[\infty^{\#}] \in X_0(N^{\#})(\mathbb{C}_p)$ denotes the cusp at infinity. Letting $\pi^{\flat} : J_0(N^{\#}) \rightarrow J_0(N)$ and $\pi'^{,\flat} : J_0(N^{\#}) \rightarrow J_0(N')$ denote the natural projections, one sees that $\pi^{\flat}(P^{\#}(\chi)) = P(\chi)$ and that $\pi'^{,\flat}(P^{\#}(\chi)) = P'(\chi)$. Let $\iota^{\#} : X_0(N^{\#}) \rightarrow J_0(N^{\#})$ denote the Abel-Jacobi map sending $[\infty^{\#}] \mapsto 0$. Viewing $f$ and $g$ as having level $N^{\#}$, we define their associated differential forms by 
$$\omega_f^{\#} \in \Omega_{J_0(N^{\#})/\mathcal{O}_{\mathbb{C}_p}}^1 \;\text{such that}\; \iota^{\#,*}\omega_f^{\#} = f(q)\cdot dq/q \in \Omega_{X_0(N^{\#})/\mathcal{O}_{\mathbb{C}_p}}^1$$
and similarly define $\omega_g^{\#} \in \Omega_{J_0(N^{\#})/\mathcal{O}_{\mathbb{C}_p}}^1$. One sees that $\pi^{\flat,*}\omega_f = \omega_f^{\#}$ and $\pi'^{,\flat,*}\omega_g = \omega_g^{\#}$. Finally, define $$\log_{\omega_f^{\#}}P^{\#}(\chi) := \sum_{[\mathfrak{a}]\in\mathcal{C}\ell(\mathcal{O}_K)}\chi^{-1}(\mathfrak{a})\log_{\omega_f^{\#}}([\mathfrak{a}\star(A,A[\mathfrak{N}^{\#}])] - [\infty^{\#}]) \in L_p$$
and similarly for $\log_{\omega_g^{\#}}P^{\#}(\chi)$.

Let $N_0^{\#}$ denote the prime-to-$p$ part of $N^{\#}$. Let $\mathcal{X}$ denote the canonical smooth proper model of $X_0(N_0^{\#})$ over $\mathbb{Z}_p$, and let $\mathcal{X}_{\mathbb{F}_p}$ denote its special fiber. There is a natural reduction map $\mathrm{red} : X_0(N_0^{\#})(\mathbb{C}_p) = \mathcal{X}(\mathcal{O}_{\mathbb{C}_p}) \rightarrow \mathcal{X}_{\mathbb{F}_p}(\overline{\mathbb{F}}_p)$. Viewing $X_0(N_0^{\#})(\mathbb{C}_p)$ as a rigid analytic space, the inverse image in $X_0(N_0^{\#})(\mathbb{C}_p)$ of an element of the finite set of supersingular points in $\mathcal{X}_{\mathbb{F}_p}(\overline{\mathbb{F}}_p)$ is conformal to an open unit disc, and is referred to as a \emph{supersingular disc}. Let $\mathcal{D}_0$ denote the the affinoid domain of good reduction obtained by removing the finite union of supersingular discs from the rigid space $X_0(N_0^{\#})(\mathbb{C}_p)$. In the moduli-theoretic interpretation, $\mathcal{D}_0$ consists of points $[(A,C)]$ over $\mathcal{O}_{\mathbb{C}_p}$ of good reduction such that $A\otimes_{\mathcal{O}_{\mathbb{C}_p}}\overline{\mathbb{F}}_p$ is ordinary. The canonical projection $X_0(N^{\#}) \rightarrow X_0(N_0^{\#})$ has a \emph{rigid analytic} section on $\mathcal{D}_0$ given by ``increasing level $N_0^{\#}$ structure by the order $N^{\#}/N_0^{\#}$ canonical subgroup''. Namely given $[(A,C)] \in \mathcal{D}_0$, the section is defined by $[(A,C)] \mapsto [(A,C\times \hat{A}[N^{\#}/N_0^{\#}])]$. We identify $\mathcal{D}_0$ with its lift $\mathcal{D}$, which is called the \emph{ordinary locus} of $X_0(N^{\#})(\mathbb{C}_p)$; one sees from the above construction that $\mathcal{D}$ is an affinoid domain of good reduction. 

A $p$-adic modular form $F$ of weight 2 (as defined in \S \ref{sec:padicmodularforms}) can be equivalently viewed as a rigid analytic section of $(\Omega_{X_0(N^{\#})/\mathbb{C}_p}^1)_{|_{\mathcal{D}}}$ (viewed as an analytic sheaf). Under this identification, the exterior differential is given on $q$-expansions by $d = \theta\frac{dq}{q}$ where $\theta$ is the Atkin--Serre operator on $p$-adic modular forms acting via $q\frac{d}{dq}$ on $q$-expansions. Thus for each $j \in \mathbb{Z}_{\ge 0}$, $\theta^jF$ is a rigid analytic section of $(\Omega_{X_0(N^{\#})/\mathbb{C}_p}^{1+j})_{|_\mathcal{D}}$. The collection of $p$-adic modular forms $\theta^j(f^{(p)})$ varies $p$-adic continuously in $j \in \mathbb{Z}/(p-1)\times \mathbb{Z}_p$ (as one verifies on $q$-expansions), and so 
$$\theta^{-1}(f^{(p)}) := \lim_{j\rightarrow (-1,0)}\theta^j(f^{(p)})$$
is a rigid analytic function on $\mathcal{D}$ and a Coleman primitive for $\iota^{\#,*}\omega_{f^{(p)}}$ since $$d\theta^{-1}(f^{(p)}) = f^{(p)}(q)\cdot dq/q = \iota^{\#,*}\omega_{f^{(p)}}.$$ Also note that $\iota^{\#,*}\omega_f$ (restricted to $\mathcal{D}$) has a Coleman primitive $F_{\iota^{\#,*}\omega_f^{\#}}$ by part (1) of Theorem \ref{ColemanExtension} (applied to $R = \mathbb{Q}_p$, $X = X_0(N^{\#})$, $U = \mathcal{D}$ and $A = J_0(N^{\#})$), which we can (and do) choose to take the value 0 at $[\infty^{\#}]$. As a locally analytic function on $\mathcal{D}$, $F_{\iota^{\#,*}\omega_f^{\#}}$ can be viewed as an element of $\tilde{M}_0^{\text{$p$-adic}}(\Gamma_0(N^{\#}))$ (see Definition \ref{def:modforms}). By the moduli-theoretic definition of $(p)$-stabilization in terms of the operators $V_p$ defined in \S \ref{sec:stabilization}, we have $$d\theta^{-1}(f^{(p)}) = d(F_{\iota^{\#,*}\omega_f^{\#}})^{(p)},$$ and so $$\theta^{-1}(f^{(p)}) = (F_{\iota^{\#,*}\omega_f^{\#}})^{(p)}$$ by uniqueness of Coleman primitives. The same argument shows that $\theta^{-1}(g^{(p)}) = (F_{\iota^{\#,*}\omega_g^{\#}})^{(p)}$.

Since $\bar{\rho}_f \cong \bar{\rho}_g$, we have
$$\theta^j (f^{(pNN'/M)})(q) \equiv \theta^j (g^{(pNN'/M)})(q) \pmod {\lambda^m\mathcal{O}_{\mathbb{C}_p}}$$
for all $j \ge 0$. Letting $j \rightarrow (-1,0) \in \mathbb{Z}/(p-1)\times\mathbb{Z}_p$, we find that
$$\theta^{-1} (f^{(pNN'/M)})(q) \equiv \theta^{-1} (g^{(pNN'/M)})(q) \pmod {\lambda^m\mathcal{O}_{\mathbb{C}_p}}.$$
Let $N_0$ denote the prime-to-$p$ part of $NN'/M$. One sees directly from the description of stabilization operators on $q$-expansions that $\theta^{-1} (f^{(pNN'/M)})(q) = (\theta^{-1}(f^{(p)}))^{(N_0)}(q)$ and $\theta^{-1} (g^{(pNN'/M)})(q) = (\theta^{-1}(g^{(p)}))^{(N_0)}(q)$. Thus, the above congruence becomes
$$(\theta^{-1} (f^{(p)}))^{(N_0)}(q) \equiv (\theta^{-1} (g^{(p)}))^{(N_0)}(q) \pmod {\lambda^m\mathcal{O}_{\mathbb{C}_p}}.$$
Using the identities $\theta^{-1}(f^{(p)}) = (F_{\iota^{\#,*}\omega_f^{\#}})^{(p)}$ and $\theta^{-1}(g^{(p)}) = (F_{\iota^{\#,*}\omega_g^{\#}})^{(p)}$ and the equality of stabilization operators $(pN_0) = (pNN'/M)$, we have
$$(F_{\iota^{\#,*}\omega_f^{\#}})^{(pNN'/M)}(q) \equiv (F_{\iota^{\#,*}\omega_g^{\#}})^{(pNN'/M)}(q) \pmod {\lambda^m\mathcal{O}_{\mathbb{C}_p}}.$$
Thus, applying the $q$-expansion principle (i.e. the fact that the $q$-expansion map is injective), we have that
\begin{equation}
(F_{\iota^{\#,*}\omega_f^{\#}})^{(pNN'/M)} \equiv (F_{\iota^{\#,*}\omega_g^{\#}})^{(pNN'/M)} \pmod{\lambda^m\mathcal{O}_{\mathbb{C}_p}}
\end{equation}
as weight 0 $p$-adic modular forms on $\mathcal{D}$ over $\mathcal{O}_{\mathbb{C}_p}$. In particular, for an ordinary CM test triple $(A,A[\mathfrak{N}^{\#}],\omega)$, we have
\begin{equation}\label{eq:congruence}
(F_{\iota^{\#,*}\omega_f^{\#}})^{(pNN'/M)}(\mathfrak{a}\star(A,A[\mathfrak{N}^{\#}],\omega)) \equiv (F_{\iota^{\#,*}\omega_g^{\#}})^{(pNN'/M)}(\mathfrak{a}\star(A,A[\mathfrak{N}^{\#}],\omega)) \pmod{\lambda^m\mathcal{O}_{\mathbb{C}_p}}.
\end{equation}

Applying Lemma \ref{lemma-calc} inductively to $F_t = F_{\iota^{\#,*}\omega_f^{\#}}^{(\prod_{i = 1}^{r-t}\ell_i)}$ for $1 \le t \le r$ where $\prod_{i = 1}^r\ell_i$ is the square-free part of $pNN'/M$ (so that $F_0 = F_{\iota^{\#,*}\omega_f^{\#}}^{(pNN'/M)}$, $F_r = F_{\iota^{\#,*}\omega_f^{\#}}$ and $F_t^{(\ell_t)} = F_{t-1}$), and noting that $\theta F_{\iota^{\#,*}\omega_f^{\#}}(q) = f(q)$ implies $a_{\ell_t}(F_t) = a_{\ell_t}(f)/\ell_t$, we obtain, for any full set of prime-to-$\mathfrak{N}^{\#}$ integral representatives $\{\mathfrak{a}\}$ of $\mathcal{C}\ell(\mathcal{O}_K)$, 
\begin{align*}
&\sum_{[\mathfrak{a}]\in\mathcal{C}\ell(\mathcal{O}_K)}\chi^{-1}(\mathfrak{a})(F_{\iota^{\#,*}\omega_f^{\#}})^{(pNN'/M)}(\mathfrak{a}\star(A,A[\mathfrak{N}^{\#}],\omega)) \\
&= \left(\prod_{\ell|pNN'/M,\ell\nmid N}1-\frac{a_{\ell}(f)\chi^{-1}(\overline{v})}{\ell} + \frac{\chi^{-2}(\overline{v})}{\ell}\right)\left(\prod_{\ell|pNN'/M,\ell|N}1-\frac{a_{\ell}(f)\chi^{-1}(\overline{v})}{\ell}\right) \\
&\hspace{7cm}\cdot \sum_{[\mathfrak{a}]\in \mathcal{C}\ell(\mathcal{O}_K)}\chi^{-1}(\mathfrak{a})F_{\iota^{\#,*}\omega_f^{\#}}(\mathfrak{a}\star(A,A[\mathfrak{N}^{\#}],\omega))
\end{align*}
and similarly for $F_{\iota^{\#,*}\omega_g^{\#}}$. Thus by (\ref{eq:congruence}), we have
\begin{align*}&\left(\prod_{\ell|pNN'/M,\ell\nmid N}1-\frac{a_{\ell}(f)\chi^{-1}(\overline{v})}{\ell} + \frac{\chi^{-2}(\overline{v})}{\ell}\right)\left(\prod_{\ell|pNN'/M,\ell|N}1-\frac{a_{\ell}(f)\chi^{-1}(\overline{v})}{\ell}\right)\\ 
&\hspace{6cm}\cdot \sum_{[\mathfrak{a}]\in \mathcal{C}\ell(\mathcal{O}_K)}\chi^{-1}(\mathfrak{a})F_{\iota^{\#,*}\omega_f^{\#}}([\mathfrak{a}\star(A,A[\mathfrak{N}^{\#}])])\\
&\equiv \left(\prod_{\ell|pNN'/M,\ell\nmid N}1-\frac{a_{\ell}(g)\chi^{-1}(\overline{v})}{\ell} + \frac{\chi^{-2}(\overline{v})}{\ell}\right)\left(\prod_{\ell|pNN'/M,\ell|N}1-\frac{a_{\ell}(g)\chi^{-1}(\overline{v}) }{\ell}\right)\\ 
&\hspace{6cm}\cdot \sum_{[\mathfrak{a}]\in \mathcal{C}\ell(\mathcal{O}_K)}\chi^{-1}(\mathfrak{a})F_{\iota^{\#,*}\omega_g^{\#}}([\mathfrak{a}\star(A,A[\mathfrak{N}^{\#}])]) \pmod{\lambda^m\mathcal{O}_{\mathbb{C}_p}}. 
\end{align*}

By part (2) of Theorem \ref{ColemanExtension}, we have $F_{\iota^{\#,*}\omega_f^{\#}} = \iota^{\#,*}\log_{\omega_f^{\#}}$ and $F_{\iota^{\#,*}\omega_g^{\#}} = \iota^{\#,*}\log_{\omega_g^{\#}}$. Thus, the above congruence becomes 
\begin{align*}&\left(\prod_{\ell|pNN'/M,\ell\nmid N}1-\frac{a_{\ell}(f)\chi^{-1}(\overline{v})}{\ell} + \frac{\chi^{-2}(\overline{v})}{\ell}\right)\left(\prod_{\ell|pNN'/M,\ell|N}1-\frac{a_{\ell}(f)\chi^{-1}(\overline{v})}{\ell}\right)\log_{\omega_f^{\#}}P^{\#}(\chi)\\
&\equiv \left(\prod_{\ell|pNN'/M,\ell\nmid N}1-\frac{a_{\ell}(g)\chi^{-1}(\overline{v})}{\ell} + \frac{\chi^{-2}(\overline{v})}{\ell}\right)\left(\prod_{\ell|pNN'/M,\ell|N}1-\frac{a_{\ell}(g)\chi^{-1}(\overline{v}) }{\ell}\right)\log_{\omega_g^{\#}}P^{\#}(\chi)\\
&\hspace{13cm} \pmod{\lambda^m\mathcal{O}_{\mathbb{C}_p}}. 
\end{align*}
In fact, since both sides of this congruence belong to $L_p$ and $L_p\cap \mathcal{O}_{\mathbb{C}_p} = \mathcal{O}_{L_p}$, this congruence in fact holds mod $\lambda^m\mathcal{O}_{L_p}$. The theorem now follows from the functoriality of the $p$-adic logarithm:
$$\log_{\omega_f^{\#}}P^{\#}(\chi) = \log_{\pi^{\flat,*}\omega_f}P^{\#}(\chi) = \log_{\omega_f}P(\chi) = \log_{\pi_f^*\omega_{A_f}}P(\chi) = \log_{\omega_{A_f}}P_f(\chi)$$
and similarly $\log_{\omega_g^{\#}}P^{\#}(\chi) = \log_{\omega_{A_g}}P_g(\chi)$. 
\end{proof}

\begin{remark}The normalizations of $\omega_E$ and $\omega_{E'}$ in the statement of Theorem \ref{thm:maincongruence} \emph{a priori} imply that both sides of Theorem \ref{thm:maincongruence} are $p$-integral. This is because CM points are integrally defined by the theory of CM and the above proof shows that the rigid analytic function $\iota^{\#,*}\log_{\omega_{f^{(pNN'/M)}}}$ has integral $q$-expansion.

  Let $\omega_{\mathcal{E}}$ denote the canonical N\'{e}ron differential of $E$ (as we do in \S \ref{sec:proof-4}), and let $c \in \mathbb{Z}$ such that $\omega_{\mathcal{E}} = c\cdot\omega_E$. Note that the normalization of the $p$-adic formal logarithm $\log_{\omega_E}$ above differs by a factor of $c$ from that of the normalization $\log_E := \log_{\omega_{\mathcal{E}}}$.  So we know that $$\frac{|\tilde{E}^{\mathrm{ns}}(\mathbb{F}_p)|}{p\cdot c}\cdot\log_EP=\frac{|\tilde{E}^{\mathrm{ns}}(\mathbb{F}_p)|}{p}\cdot\log_{\omega_E}P$$ is $p$-integral. We remark this is compatible with the $p$-part of the BSD conjecture. In fact, the $p$-part of the BSD conjecture predicts that $P$ is divisible by $p^{\ord_p c}\cdot c_p(E)$ in $E(K)$ (see the conjectured formula (\ref{eq:BSDHeegner})) and so $\frac{|\tilde E^\mathrm{ns}(\mathbb{F}_p)|}{c}\cdot P$ lies in the formal group and hence $\frac{|\tilde E^\mathrm{ns}(\mathbb{F}_p)|}{c}\cdot \log_EP\in p \mathcal{O}_{K_p}$.
\end{remark}  

\begin{remark}Note that both sides of the congruence in the statement of Theorem \ref{thm:genmaincongruence} depend on the choices of appropriate $\mathfrak{N}, \mathfrak{N}'$ up to a sign $\pm 1$. In fact, for a rational prime $\ell|N$ (resp. $\ell|N'$), if we let $v = (\mathfrak{N},\ell)$ with complex conjugate prime ideal $\overline{v}$ (resp. $v' = (\mathfrak{N}',\ell)$ with complex conjugate prime ideal $\overline{v'}$), replacing $\mathfrak{N}$ with $\mathfrak{N}v^{-1}\overline{v}$ (resp. $\mathfrak{N}'$ with $\mathfrak{N}'v'^{-1}\overline{v'}$) amounts to performing an Atkin-Lehner involution on the Heegner point $P_f(\chi)$ (resp. $P_g(\chi)$), which amounts to multiplying the Heegner point by the local root number $w_{\ell}(A_f) \in \{\pm 1\}$ (resp. $w_{\ell}(A_g) \in \{\pm 1\}$). Our proof in fact shows that for whatever change we make in choice of $\mathfrak{N}$ (resp. $\mathfrak{N}'$), both sides are multiplied by the same sign $\pm 1$.
\end{remark}

\subsection{Proof of Theorem \ref{thm:maincongruence}}
It follows immediately from Theorem \ref{thm:genmaincongruence} by taking $\chi = \mathbf{1}$, $L = K$, and $f$ and $g$ to be associated with $E$ and $E'$. The Heegner points $P = P_f(\mathbf{1})$ and $P' = P_g(\mathbf{1})$ are defined up to sign and torsion depending on the choices of $\mathfrak{N}$ and $\mathfrak{N}'$ (see \cite{Gross1984}).  

\section{Goldfeld's conjecture for a general class of elliptic curves}\label{sec:goldf-conj-gener}

Our goal in this section is to prove Theorem \ref{thm:goldfeld}. Throughout this section we assume
\begin{center}
$E(\mathbb{Q})[2]=0$, or equivalently, $\Gal(\mathbb{Q}(E[2])/\mathbb{Q})\cong S_3$ or $\mathbb{Z}/3 \mathbb{Z}$.  
\end{center}
Notice that this assumption is mild and is satisfied by 100\% of all elliptic curves (when ordered by naive height).

\subsection{Explicit twists}
Now we restrict our attention to the following well-chosen set of twisting discriminants.

\begin{definition}\label{def:setS}
Given an imaginary quadratic field $K$ satisfying the Heegner hypothesis for $N$, we define the set $\mathcal{S}$ consisting of primes $\ell\nmid 2N$ such that
  \begin{enumerate}
   \item $\ell$ splits in $K$.
  \item $\Frob_\ell\in \Gal(\mathbb{Q}(E[2])/\mathbb{Q})$ has order 3.
  \end{enumerate}
We define $\mathcal{N}$ to be the set of all integers $d\equiv 1\pmod{4}$ such that $|d|$ is a square-free product of primes in $\mathcal{S}$. 
\end{definition}

\begin{remark}
    By Chebotarev's density theorem, the set of primes $\mathcal{S}$ has Dirichlet density $\frac{1}{6}=\frac{1}{2}\cdot\frac{1}{3}$ or $\frac{1}{3}=\frac{1}{2}\cdot\frac{2}{3}$ depending on $\Gal(\mathbb{Q}(E[2]/\mathbb{Q}))\cong S_3$ or $\mathbb{Z}/3 \mathbb{Z}$. In particular, there are infinitely many elements of $\mathcal{N}$ with $k$ prime factors for any fixed $k\ge1$.
\end{remark}

For $d\in \mathcal{N}$, we consider $E^{(d)}/\mathbb{Q}$, the quadratic twist of $E/\mathbb{Q}$ by $\mathbb{Q}(\sqrt{d})$. Since $d\equiv 1\pmod4$, we know that 2 is unramified in $\mathbb{Q}(\sqrt{d})$ and $E^{(d)}/\mathbb{Q}$ has conductor $Nd^2$.  Hence $K$ also satisfies the Heegner hypothesis for $Nd^2$. Let $P^{(d)}\in E^{(d)}(K)$ be the corresponding Heegner point. Since $$E[2]\cong E^{(d)}[2],$$ we can apply Theorem \ref{thm:maincongruence} to $E$ and $E^{(d)}$, $p=2$ and obtain the following theorem.

\begin{theorem}\label{thm:overK}
  Suppose $E/\mathbb{Q}$ is an elliptic curve with $E(\mathbb{Q})[2]=0$. Let $K$ be an imaginary quadratic field satisfying the Heegner hypothesis for $N$.
  Assume
  \begin{equation}
    \label{eq:star}\
2\text{ splits in } K \text{ and }\quad \frac{|\tilde E^\mathrm{ns}(\mathbb{F}_2)|\cdot\log_{\omega_E}(P)}{2}\not\equiv0\pmod{2}.     \tag{$\bigstar$}
  \end{equation} Then for any $d\in\mathcal{N}$:

  \begin{enumerate}
  \item \label{item:nonvanishing} We have $$\frac{|\tilde E^{(d),\mathrm{ns}}(\mathbb{F}_2)|\cdot\log_{\omega_{E^{(d)}}}(P^{(d)})}{2}\not\equiv0\pmod{2}.$$ In particular, $P^{(d)}\in E^{(d)}(K)$ is of infinite order and $E^{(d)}/K$ has both algebraic and analytic rank one.
  \item\label{item:rankpart} The rank part of the BSD conjecture is true for $E^{(d)}/\mathbb{Q}$ and $E^{(d\cdot d_K)}/\mathbb{Q}$. One of them has both algebraic and analytic rank one and the other has both algebraic and analytic rank zero.
  \item\label{item:rootnumber} $E^{(d)}/\mathbb{Q}$ (resp. $E^{(d\cdot d_K)}/\mathbb{Q}$) has the same rank as $E/\mathbb{Q}$ if and only if $\psi_d(-N)=1$ (resp. $\psi_d(-N)=-1$), where $\psi_d$ is the quadratic character associated to $\mathbb{Q}(\sqrt{d})/\mathbb{Q}$. 
  \end{enumerate}
\end{theorem}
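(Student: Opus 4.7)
The plan is to apply Theorem~\ref{thm:maincongruence} to the pair $(E,E^{(d)})$ with $p=2$ and $m=1$, using the $G_\mathbb{Q}$-isomorphism $E[2]\cong E^{(d)}[2]$, and then derive the three claims by combining the resulting non-vanishing modulo $2$ with Gross--Zagier, Kolyvagin, and the twist formula for root numbers.

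First, I would compute the quantities entering the main congruence. Every $\ell\mid d$ is coprime to $2N$, so $(N,Nd^2)=N$; for each $\ell\mid N$ we have $a_\ell(E^{(d)})=\psi_d(\ell)a_\ell(E)\equiv a_\ell(E)\pmod 2$, hence $M=N^2$ and $pNN'/M=2d^2$. For a prime $\ell\mid d$: on the $E$ side, $E$ has good reduction at $\ell$, and since $\Frob_\ell$ has order $3$ in $\Gal(\mathbb{Q}(E[2])/\mathbb{Q})\subset\GL_2(\mathbb{F}_2)$ its characteristic polynomial is $x^2+x+1$, so $a_\ell(E)\equiv 1\pmod 2$ and $|\tilde E^{\mathrm{ns}}(\mathbb{F}_\ell)|/\ell=(\ell+1-a_\ell(E))/\ell$ is a $2$-adic unit; on the $E^{(d)}$ side, $\ell^2\mid Nd^2$ forces additive reduction, so $|\tilde E^{(d),\mathrm{ns}}(\mathbb{F}_\ell)|/\ell=1$. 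Substituting into Theorem~\ref{thm:maincongruence}, using that both sides are $2$-integral (by the remark following Theorem~\ref{thm:genmaincongruence}), the congruence reduces modulo $2\mathcal{O}_{K_2}$ to
\[
u\cdot\frac{|\tilde E^{\mathrm{ns}}(\mathbb{F}_2)|\log_{\omega_E}(P)}{2}\equiv\pm\frac{|\tilde E^{(d),\mathrm{ns}}(\mathbb{F}_2)|\log_{\omega_{E^{(d)}}}(P^{(d)})}{2}\pmod{2\mathcal{O}_{K_2}}
\]
for some $u\in\mathcal{O}_{K_2}^\times$. Assumption $(\bigstar)$ makes the left side a $2$-adic unit, proving the non-vanishing in (1). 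Since $\log_{\omega_{E^{(d)}}}(P^{(d)})\neq 0$, $P^{(d)}$ lies outside the torsion of $E^{(d)}(K)$; the Gross--Zagier formula then forces $L'(E^{(d)}/K,1)\neq 0$ (the sign of the functional equation being $-1$ thanks to the Heegner hypothesis for $Nd^2$), and Kolyvagin promotes this to algebraic rank one with finite Sha, completing (1).

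For (2), I factor $L(E^{(d)}/K,s)=L(E^{(d)}/\mathbb{Q},s)L(E^{(d\cdot d_K)}/\mathbb{Q},s)$; the analytic ranks of the two factors sum to one, so exactly one is $0$ and the other $1$. Decomposing $P^{(d)}$ into complex-conjugation eigenspaces deposits a non-torsion Heegner-type point in the rank-one factor, which Kolyvagin upgrades to algebraic rank one; Kolyvagin's theorem applied to the same Heegner system also forces the Mordell--Weil rank of the analytic-rank-$0$ factor to vanish. For (3), the standard twist formula $w(E^{(d)})=w(E)\psi_d(-N)$ (valid because $(d,N)=1$) together with rank $\in\{0,1\}$ for $E$ and its twists shows that $E^{(d)}/\mathbb{Q}$ shares its rank with $E/\mathbb{Q}$ iff the two share their global root number iff $\psi_d(-N)=1$. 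The analogous statement for $E^{(d\cdot d_K)}$ follows from the same formula applied to $d\cdot d_K$, using that $\psi_{d_K}(-N)=-1$: every $\ell\mid N$ splits in $K$ by the Heegner hypothesis, giving $\psi_{d_K}(N)=1$, while $d_K<0$ yields $\psi_{d_K}(-1)=-1$.

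The main obstacle I anticipate is the careful bookkeeping in the first step: verifying that the extra Euler factors at primes $\ell\mid d$ on both sides of Theorem~\ref{thm:maincongruence} really are $2$-adic units (so they can be cancelled without losing the non-vanishing) and that the sign ambiguity $\pm$ is harmless for the conclusion. Once (1) is in place, (2) and (3) follow by standard inputs (Gross--Zagier, Kolyvagin, and the twist formula for root numbers).
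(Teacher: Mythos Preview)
Your proposal is correct and follows essentially the same route as the paper: apply Theorem~\ref{thm:maincongruence} to the pair $(E,E^{(d)})$ at $p=2$, compute $M=N^2$ by checking $a_\ell(E)\equiv a_\ell(E^{(d)})\pmod 2$ for all $\ell\mid N$, observe that the Euler factors at primes $\ell\mid d$ are $2$-adic units on both sides (good reduction with $a_\ell(E)\equiv 1$ versus additive reduction with factor $1$), and conclude the displayed congruence; parts (2) and (3) then follow from the $L$-function factorization, Gross--Zagier, Kolyvagin, and the root-number twist formula, exactly as in the paper. Your treatment is slightly more explicit in places (the characteristic-polynomial argument for $a_\ell(E)\equiv 1\pmod 2$, the eigenspace decomposition in (2), and the verification $\psi_{d_K}(-N)=-1$ in (3)), but there is no substantive difference in strategy.
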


\subsection{Proof of Theorem \ref{thm:overK}}\label{sec:proof-2}
  \begin{enumerate}
  \item We apply Theorem \ref{thm:maincongruence} to the two elliptic curves $E/\mathbb{Q}$ and $E^{(d)}/\mathbb{Q}$ and $p=2$. Let $\ell|Nd^2$ be a prime. Notice
    \begin{enumerate}
    \item if $\ell|| N$, $$a_\ell(E), a_\ell(E^{(d)})\in \{\pm1\},$$
    \item if $\ell^2|N$, $$a_\ell(E)=a_\ell(E^{(d)})=0,$$
    \item if $\ell\mid d$, we have $\ell\in\mathcal{S}$. Since $\Frob_\ell$ is order 3 on $E[2]$, we know that its trace $$a_\ell(E)\equiv 1\pmod{2}.$$ Since $\ell^2|Nd^2$, we know that $$a_\ell(E^{(d)})=0.$$
    \end{enumerate}
     It follows that $M=N^2$. The congruence formula in  Theorem \ref{thm:maincongruence} then reads: $$\frac{|\tilde E^\mathrm{ns}(\mathbb{F}_2)|}{2}\cdot \prod_{\ell\mid d}\frac{|\tilde E^\mathrm{ns}(\mathbb{F}_\ell)|}{\ell}\cdot\log_{\omega_E}P\equiv\frac{|\tilde E^{(d),\mathrm{ns}}(\mathbb{F}_2)|}{2}\cdot\prod_{\ell\mid d}\frac{|\tilde E^\mathrm{(d),ns}(\mathbb{F}_\ell)|}{\ell}\cdot \log_{\omega_{E^{(d)}}}P^{(d)}\pmod{2}.$$ Since $E$ has good reduction at $\ell\mid d$ and $\ell$ is odd, we have $$|\tilde E^\mathrm{ns}(\mathbb{F}_\ell)|=|E(\mathbb{F}_\ell)|=\ell+1-a_\ell(E)\equiv a_\ell(E)\equiv 1\pmod{2}.$$ Since $E^{(d)}$ has additive reduction at $\ell\mid d$ and $\ell$ is odd, we have $$|\tilde E^{(d),\mathrm{ns}}(\mathbb{F}_\ell)|=\ell\equiv 1\pmod{2}.$$ Therefore we obtain the congruence $$\frac{|\tilde E^\mathrm{ns}(\mathbb{F}_2)|\cdot \log_{\omega_E}P}{2}\equiv\frac{|\tilde E^{(d),\mathrm{ns}}(\mathbb{F}_2)|\cdot\log_{\omega_{E^{(d)}}}P^{(d)}}{2}\pmod{2}.$$ Assumption (\ref{eq:star}) says that the left-hand side is nonzero, hence the right-hand side is also nonzero. In particular, the Heegner point $P^{(d)}$ is of infinite order. The last assertion follows from the celebrated work of Gross--Zagier and Kolyvagin.
  \item Since $$L(E^{(d)}/K,s)=L(E^{(d)}/\mathbb{Q},s)\cdot L(E^{(d\cdot d_K)}/\mathbb{Q},s),$$ the sum of the analytic rank of $E^{(d)}/\mathbb{Q}$ and $E^{(d\cdot d_K)}/\mathbb{Q}$ is the equal to the analytic rank of $E^{(d)}/K$, which is one by the first part. Hence one of them has analytic rank one and the other has analytic rank zero. The remaining claims follow from Gross--Zagier and Kolyvagin.
  \item It is well-known that the global root numbers of quadratic twists are related by $$\varepsilon(E/\mathbb{Q})\cdot\varepsilon(E^{(d)}/\mathbb{Q})=\psi_d(-N).$$ It follows that $E^{(d)}/\mathbb{Q}$ and $E/\mathbb{Q}$ have the same  global root number if and only if $\psi_d(-N)$=1. Since the analytic ranks of $E^{(d)}/\mathbb{Q}$ and $E/\mathbb{Q}$ are at most one, the equality of global root numbers implies the equality of the analytic ranks.\qedhere
  \end{enumerate}

\subsection{Proof of Theorem \ref{thm:goldfeld}}\label{sec:proof-3}
This is a standard application of Ikehara's tauberian theorem (see, e.g., \cite[2.4]{Serre1976}). We include the argument for completeness. Since the set of primes $\mathcal{S}$ has Dirichlet density $\alpha=\frac{1}{6}$ or $\frac{1}{3}$ depending on $\Gal(\mathbb{Q}(E[2]/\mathbb{Q}))\cong S_3$ or $\mathbb{Z}/3 \mathbb{Z}$, we know that $$\sum_{\ell \in \mathcal{S}} \ell^{-s}\sim \alpha\cdot \log \frac{1}{s-1}, \quad s\rightarrow 1^+.$$ Then $$\log\left(\sum_{d\in \mathcal{N}} |d|^{-s}\right)=\log\left(\prod_{\ell\in \mathcal{S}}(1+\ell^{-s})\right)\sim \sum_{\ell\in \mathcal{S}}\ell^{-s}\sim\alpha\cdot\log\frac{1}{s-1}, \quad s\rightarrow 1^+.$$ Hence $$\sum_{d\in \mathcal{N}}|d|^{-s}=\frac{1}{(s-1)^{\alpha}}\cdot f(s)$$ for some function $f(s)$ holomorphic and nonzero when $\Re(s)\ge1$. It follows from Ikehara's tauberian theorem that $$\#\{d\in \mathcal{N}: |d|<X\}\sim c\cdot \frac{X}{\log^{1-\alpha} X},\quad X\rightarrow \infty$$ for some constant $c>0$. But by Theorem \ref{thm:overK} (\ref{item:rankpart}), we have for $r=0,1$, $$N_r(E, X)\ge\#\{d\in \mathcal{N}: |d|< X/ |d_K|\}.$$ The results then follow.

\section{The 2-part of the BSD conjecture over $K$}
\label{sec:proof-4}


\subsection{The strategy of the proof}\label{sec:strategy-proof} Let $E$ and $K$ be as in Theorem \ref{thm:2partBSD}. Under Assumption (\ref{eq:star}) and the assumption that $c_2(E)$ is odd, the Heegner point $P\in E(K)$ is \emph{indivisible by 2} (Lemma \ref{lem:indivisibleby2}), equivalently,  all the local Tamagawa numbers of $E$ are odd, and the 2-Selmer group $\Sel_2(E/K)$ has rank one (Corollary \ref{cor:BSDoverK1}). We are able to deduce that all the local Tamagawa numbers of $E^{(d)}$ are also odd (Lemma \ref{lem:tamagawa}), and $\Sel_2(E^{(d)}/K)$ also has rank one (Lemma \ref{lem:2selmer}). These are consequences of the primes in the well-chosen set $\mathcal{S}$ being \emph{silent} in the sense of Mazur--Rubin \cite{Mazur2015}. Notice that $\Sel_2(E^{(d)}/K)$ having rank one predicts that $E^{(d)}(K)$ has rank one and $\Sha(E^{(d)}/K)[2]$ is trivial, though it is not known in general how to show this directly (Remark \ref{rem:MazurRubin}). The advantage here is that we know \emph{a priori} from the mod 2 congruence that the Heegner point $P^{(d)}\in E^{(d)}(K)$ is also \emph{indivisible by 2}. Hence the prediction is indeed true and implies BSD(2) for $E^{(d)}/K$ (Corollary \ref{cor:BSDoverK2}).

Since the Iwasawa main conjecture is not known for $p=2$, the only known way to prove BSD(2) over $\mathbb{Q}$ is to compute the 2-part of both sides of (\ref{eq:bsdformula}) explicitly. We compute the 2-Selmer group $\Sel_2(E^{(d)}/\mathbb{Q})$ (Lemma \ref{lem:2selmerQ}) and compare this to a formula of Zhai \cite{Zhai2016} (based on modular symbols) for 2-part of algebraic $L$-values for rank zero twists. This allows us to deduce BSD(2) for the rank zero curve among $E^{(d)}$ and $E^{(d\cdot d_K)}$ (Lemma \ref{lem:rankzero2part}). Finally, BSD(2) for $E^{(d)}/K$ and BSD(2) for the rank zero curve together imply BSD(2) for the rank one curve among $E^{(d)}$ and $E^{(d\cdot d_K)}$.

\subsection{BSD(2) for $E/K$}
By the Gross--Zagier formula, the BSD conjecture for $E/K$ is equivalent to the equality (\cite[V.2.2]{Gross1986}) 
\begin{equation}\label{eq:BSDHeegner}u_K\cdot c_E\cdot \prod_{\ell\mid N}c_\ell(E)\cdot  |\Sha(E/K)|^{1/2}=[E(K): \mathbb{Z} P],
\end{equation}
where $u_K=|\mathcal{O}_K^\times/\{\pm1\}|$, $c_E$ is the Manin constant of $E/\mathbb{Q}$, $c_\ell(E)=[E(\mathbb{Q}_\ell):E^0(\mathbb{Q}_\ell)]$ is the local Tamagawa number of $E$ and $[E(K): \mathbb{Z}P]$ is the index of the Heegner point $P\in E(K)$. By Assumption (\ref{eq:star}) that 2 splits in $K$, we know $K\ne \mathbb{Q}(\sqrt{-1})$ or $\mathbb{Q}(\sqrt{-3})$, so $u_K=1$. Therefore the BSD conjecture for $E/K$ is equivalent to the equality
\begin{equation}
  \label{eq:grosszagier}
  \prod_{\ell\mid N}c_\ell(E)\cdot  |\Sha(E/K)|^{1/2}=\frac{[E(K): \mathbb{Z} P]}{c_E},
\end{equation}

\begin{lemma}\label{lem:indivisibleby2}
The right-hand side of (\ref{eq:grosszagier}) is a 2-adic unit.
\end{lemma}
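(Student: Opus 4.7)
The plan is to show that under the hypotheses of Theorem \ref{thm:2partBSD} both $[E(K):\mathbb{Z}P]$ and $c_E$ are odd, from which the lemma follows immediately. The heart of the argument is to translate Assumption $(\bigstar)$ into a precise 2-adic valuation of $\log_E P$ and then compare it with what the local theory allows for arbitrary elements of $E(\mathbb{Q}_2)$.

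First, I invoke the Remark at the end of \S\ref{sec:proof-theorem} relating $\omega_E$ to the N\'{e}ron differential by $\omega_{\mathcal{E}} = c_E\cdot \omega_E$, so $\log_E = c_E\cdot \log_{\omega_E}$. Assumption $(\bigstar)$ then becomes
\[
v_2(\log_E P) \;=\; 1 + v_2(c_E) - v_2\bigl(|\tilde{E}^{\mathrm{ns}}(\mathbb{F}_2)|\bigr),
\]
the valuation being taken in $K_{\mathfrak{p}} = \mathbb{Q}_2$ for $\mathfrak{p}$ the prime of $K$ above $2$ (which exists since $2$ splits in $K$ by $(\bigstar)$). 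Since $c_E$ is odd (either by direct assumption when $E$ has additive reduction at $2$, or by Manin's conjecture in the remaining cases, as recalled in \S\ref{sec:goldf-conj-gener}), this simplifies to $v_2(\log_E P) = 1 - v_2(|\tilde{E}^{\mathrm{ns}}(\mathbb{F}_2)|)$.

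Next, for every $Q\in E(\mathbb{Q}_2)$, the standard filtration $\hat{E}(2\mathbb{Z}_2)\subset E^0(\mathbb{Q}_2)\subset E(\mathbb{Q}_2)$ with successive indices $|\tilde{E}^{\mathrm{ns}}(\mathbb{F}_2)|$ and $c_2(E)$ gives $|\tilde{E}^{\mathrm{ns}}(\mathbb{F}_2)|\cdot c_2(E)\cdot Q \in \hat{E}(2\mathbb{Z}_2)$. Since the formal logarithm begins with the identity and the higher-order terms $(a_n/n)\, t^n$ all have $v_2\ge 1$ on $t\in 2\mathbb{Z}_2$, the N\'{e}ron formal logarithm sends $\hat{E}(2\mathbb{Z}_2)$ into $2\mathbb{Z}_2$. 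Together with the hypothesis $c_2(E)$ odd, this yields the uniform bound
\[
v_2(\log_E Q) \;\ge\; 1 - v_2\bigl(|\tilde{E}^{\mathrm{ns}}(\mathbb{F}_2)|\bigr) \quad\text{for every } Q\in E(\mathbb{Q}_2).
\]
Now suppose for contradiction that $P = 2Q + t$ with $Q\in E(K)$ and $t\in E(K)_{\mathrm{tors}}$; applying $\log_E$, which kills torsion, and the bound above to the image $Q_{\mathfrak{p}}$ of $Q$ in $E(\mathbb{Q}_2)$, we get $v_2(\log_E P) = 1 + v_2(\log_E Q_{\mathfrak{p}}) \ge 2 - v_2(|\tilde{E}^{\mathrm{ns}}(\mathbb{F}_2)|)$, contradicting the equality of the first step. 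Therefore $P$ is $2$-indivisible modulo torsion in $E(K)$.

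Finally, the Galois condition $\Gal(\mathbb{Q}(E[2])/\mathbb{Q}) \in \{S_3,\mathbb{Z}/3\mathbb{Z}\}$ (equivalent to $E(\mathbb{Q})[2]=0$) forces every nonzero $2$-torsion point of $E$ to be defined only over a cubic extension of $\mathbb{Q}$, so $E(K)[2]=0$ and in particular $|E(K)_{\mathrm{tors}}|$ is odd. Since $\log_E P \ne 0$ forces $P$ to be of infinite order, Kolyvagin's theorem gives $\rank E(K) = 1$; writing $E(K) = \mathbb{Z}Q_0\oplus E(K)_{\mathrm{tors}}$ and $P = nQ_0 + t_0$, one computes $[E(K):\mathbb{Z}P] = |n|\cdot |E(K)_{\mathrm{tors}}|$, both of whose factors are odd by the above. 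Combined with $c_E$ odd, the ratio $[E(K):\mathbb{Z}P]/c_E$ is a $2$-adic unit. The main obstacle will be rigorously controlling the $2$-adic behavior of the formal logarithm on $\hat{E}(2\mathbb{Z}_2)$ uniformly across reduction types at $2$, as $p=2$ sits at the boundary of the standard $p$-adic convergence estimates for the formal logarithm.
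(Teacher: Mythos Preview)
Your argument follows essentially the same route as the paper's: show $c_E$ is odd, then argue that if $P$ were $2$-divisible in $E(K)$ modulo torsion one could push $|\tilde E^{\mathrm{ns}}(\mathbb{F}_2)|\cdot Q$ into the formal group $\hat E(2\mathbb{Z}_2)$ and use that $\log_E$ maps this into $2\mathbb{Z}_2$, contradicting $(\bigstar)$. The paper phrases the divisibility step as ``there exists $Q$ with $2Q$ an odd multiple of $P$'' and does not explicitly invoke Kolyvagin, but the content is the same.

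There is one genuine gap in your justification: you appeal to ``Manin's conjecture'' to conclude $c_E$ is odd in the good and multiplicative cases. Manin's conjecture is not a theorem, so this is not a valid citation. The paper instead invokes the results of Abbes--Ullmo: their Theorem~A gives oddness of $c_E$ when $E$ has good reduction at $2$, and their result on p.~270~(ii) gives it in the multiplicative case using the hypothesis that $c_2(E)$ is odd. You should replace your appeal to Manin's conjecture with these references (and note that nothing about this is ``recalled in \S\ref{sec:goldf-conj-gener}''; the discussion is in the proof of the present lemma and in the introductory remark after Theorem~\ref{thm:2partBSD}).

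Your flagged worry about the formal logarithm on $\hat E(2\mathbb{Z}_2)$ is not actually an obstacle: since $\omega_{\mathcal{E}}$ is the N\'eron differential, the formal logarithm has the shape $t+\sum_{n\ge 2}(b_n/n)t^n$ with $b_n\in\mathbb{Z}_2$, and for $v_2(t)\ge 1$ one has $v_2(t^n/n)=n\,v_2(t)-v_2(n)\ge n-\lfloor\log_2 n\rfloor\ge 1$ for all $n\ge 1$, independently of the reduction type of $E$ at $2$. So $\log_E(\hat E(2\mathbb{Z}_2))\subset 2\mathbb{Z}_2$ holds as you need.
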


\begin{proof}
Since $\mathbb{Q}(E[2])/\mathbb{Q}$ is an $S_3$ or $\mathbb{Z}/3 \mathbb{Z}$ extension, we know that the Galois representation $E[2]$ remains irreducible when restricted to any quadratic field, hence $E(K)[2]=0$.

Notice that the Manin constant $c_E$ is odd: it follows from \cite[Theorem A]{Abbes1996} when $E$ is good at 2, from \cite[p.270 (ii)]{Abbes1996} when $E$ is multiplicative at 2 since $c_2(E)$ is assumed to be odd, and by our extra assumption when $E$ is additive at 2.

Since $c_E$ is odd, we know that the right-hand side of (\ref{eq:grosszagier}) 2-adically integral. If it is not a 2-adic unit, then there exists some $Q\in E(K)$ such that $2 Q$ is an odd multiple of $P$. Let $\omega_\mathcal{E}$ be the N\'{e}ron differential of $E$ and let $\log_E:=\log_{\omega_\mathcal{E}}$. By the very definition of the Manin constant we have $c_E\cdot\omega_E=\omega_\mathcal{E}$ and $c_E\cdot\log_{\omega_E}=\log_E$. Hence up to a 2-adic unit, we have $$\frac{|\tilde E^\mathrm{ns}(\mathbb{F}_2)|\cdot\log_{\omega_E}P}{2}=\frac{|\tilde E^\mathrm{ns} (\mathbb{F}_2)|\cdot\log_EP}{2}=|\tilde E^\mathrm{ns} (\mathbb{F}_2)|\cdot\log_E(Q).$$ On the other hand, $c_2(E)\cdot |\tilde E^\mathrm{ns}(\mathbb{F}_2)|\cdot Q$ lies in the formal group $\hat E(2\mathcal{O}_{K_2})$ and $c_2(E)$ is assumed to be odd, we know that $$|\tilde E^\mathrm{ns} (\mathbb{F}_2)|\cdot\log_E(Q)\in 2 \mathcal{O}_{K_2},$$  which contradicts (\ref{eq:star}). So the right-hand side of (\ref{eq:grosszagier}) is a 2-adic unit.
\end{proof}

Since the left-hand side of (\ref{eq:grosszagier}) is a product of integers, Lemma \ref{lem:indivisibleby2} implies the following.

\begin{corollary}\label{cor:BSDoverK1} BSD(2) for $E/K$ is equivalent to that
\begin{center}
  all the local Tamagawa numbers $c_\ell(E)$ are odd and $\Sha(E/K)[2]=0$.  
\end{center}
\end{corollary}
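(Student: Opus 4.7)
The plan is to read the corollary off directly from Lemma \ref{lem:indivisibleby2} together with the Gross--Zagier reformulation (\ref{eq:grosszagier}) of BSD for $E/K$. Under Assumption (\ref{eq:star}) we in particular have $\log_{\omega_E}(P) \neq 0$, so $P \in E(K)$ is of infinite order and $E/K$ has both algebraic and analytic rank one; by Gross--Zagier and Kolyvagin, the full BSD conjecture for $E/K$ is then equivalent to the identity (\ref{eq:grosszagier}). The assertion BSD(2) for $E/K$ is exactly the 2-adic part of this identity, i.e.\ the claim that the 2-adic valuations of the two sides of (\ref{eq:grosszagier}) agree. Since Lemma \ref{lem:indivisibleby2} shows the right-hand side is a 2-adic unit, BSD(2) for $E/K$ reduces to the claim that the left-hand side
$$\prod_{\ell \mid N}c_\ell(E)\cdot |\Sha(E/K)|^{1/2}$$
is also a 2-adic unit.

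Both factors are positive integers, so the product is a 2-adic unit if and only if each factor is odd. The first factor is odd iff each $c_\ell(E)$ with $\ell\mid N$ is odd, and since $c_\ell(E)=1$ whenever $\ell \nmid N$, this is equivalent to requiring all local Tamagawa numbers of $E$ to be odd. For the second factor, I would invoke the standard fact (from the Cassels--Tate alternating pairing, which applies since $\Sha(E/K)$ is finite by Gross--Zagier--Kolyvagin in rank one) that $|\Sha(E/K)|$ is a perfect square. Hence $|\Sha(E/K)|^{1/2} \in \mathbb{Z}_{>0}$ is odd iff $|\Sha(E/K)|$ is odd iff $\Sha(E/K)[2] = 0$. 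Combining the two equivalences yields the stated corollary.

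I do not anticipate any substantial obstacle: the corollary is essentially a formal consequence of Lemma \ref{lem:indivisibleby2}, with the only non-tautological extra input being the square-ness of $|\Sha|$. All of the actual work sits in Lemma \ref{lem:indivisibleby2}, which uses Assumption (\ref{eq:star}), the odd Manin constant, and $c_2(E)$ odd to force the Heegner point $P$ to be indivisible by $2$ in $E(K)$ by ruling out the existence of $Q\in E(K)$ with $2Q$ an odd multiple of $P$.
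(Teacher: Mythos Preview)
Your proposal is correct and follows essentially the same approach as the paper: the paper's own proof is the single sentence ``Since the left-hand side of (\ref{eq:grosszagier}) is a product of integers, Lemma \ref{lem:indivisibleby2} implies the following,'' and you have unpacked exactly this reasoning. Your additional remark that $|\Sha(E/K)|^{1/2}$ is an integer by the Cassels--Tate pairing (using finiteness of $\Sha$ via Gross--Zagier--Kolyvagin) is a point the paper leaves implicit, so your version is in fact slightly more complete.
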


\subsection{BSD(2) for $E^{(d)}/K$}

Let $d\in \mathcal{N}$.  The BSD conjecture for $E^{(d)}/K$ is equivalent to the equality
\begin{equation}
  \label{eq:grosszagier2}
  \prod_{\ell\mid Nd^2}c_\ell(E^{(d)})\cdot  |\Sha(E^{(d)}/K)|^{1/2}=\frac{[E^{(d)}(K): \mathbb{Z} P^{(d)}]}{c_{E^{(d)}}},
\end{equation}

\begin{lemma}\label{lem:tamagawa}
Assume BSD(2) is true for $E/K$. Then $c_\ell(E^{(d)})$ is odd for any $\ell\mid Nd^2$.
\end{lemma}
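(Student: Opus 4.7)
The plan is to split the primes $\ell \mid Nd^2$ into two disjoint groups: those dividing $d$ (which all lie in $\mathcal{S}$, hence are coprime to $2N$) and those dividing $N$ (necessarily coprime to $d$). The input from the hypothesis is that every $c_\ell(E)$ is odd, by Corollary~\ref{cor:BSDoverK1}.

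First I would treat $\ell \mid d$: any such $\ell \in \mathcal{S}$ is odd, $E$ has good reduction at $\ell$, and $\Frob_\ell \in \Gal(\mathbb{Q}(E[2])/\mathbb{Q})$ has order $3$. Since $\chi_d$ is ramified quadratic at $\ell$, $E^{(d)}$ has Kodaira type $I_0^*$ at $\ell$, and the geometric N\'eron component group is canonically isomorphic to $E[2]$ with its natural $\Frob_\ell$-action. Therefore
$$c_\ell(E^{(d)}) = |E[2]^{\Frob_\ell}| = 1,$$
since an order-$3$ element of $\GL_2(\mathbb{F}_2) \cong S_3$ has characteristic polynomial $X^2 + X + 1$ and fixes no nonzero vector.

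Next I would treat $\ell \mid N$: here $\ell \nmid d$, and since $d \equiv 1 \pmod 4$ the character $\chi_d$ is unramified at $2$ as well as at every odd prime not dividing $d$. Thus $E^{(d)}/\mathbb{Q}_\ell$ is isomorphic to $E/\mathbb{Q}_\ell$ if $\chi_d(\ell) = 1$, and to its unramified quadratic twist otherwise. In the first case $c_\ell(E^{(d)}) = c_\ell(E)$ is odd. In the second case I would invoke the general fact that the parity of the Tamagawa number is invariant under unramified quadratic twist, verified by running through Kodaira types: in multiplicative types $I_n$ the twist swaps split$\leftrightarrow$nonsplit, exchanging Tamagawa values $n$ and $\gcd(n,2)$ (so odd parity is preserved), and additive types compatible with odd $|\Phi|$ are closed under the twist.

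The main obstacle is the case $\ell = 2$ with additive reduction, where the Kodaira types proliferate and wild ramification at $2$ complicates the direct classification; here the additional hypotheses of Theorem~\ref{thm:2partBSD} that $c_2(E)$ is odd and the Manin constant is odd should precisely rule out the bad cases, reducing the verification to a finite case analysis via Tate's algorithm.
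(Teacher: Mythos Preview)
Your decomposition into $\ell \mid d$ and $\ell \mid N$ matches the paper's, and your treatment of $\ell \mid d$ (type $I_0^*$, component group $\cong E[2]$ with its $\Frob_\ell$-action, no fixed points since $\Frob_\ell$ has order $3$) is essentially the paper's argument in slightly more explicit form.

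Where you diverge is the case $\ell \mid N$. You propose a Kodaira-type case analysis and then single out $\ell = 2$ with additive reduction as ``the main obstacle,'' hoping the extra hypotheses on $c_2(E)$ and the Manin constant will rescue a Tate-algorithm computation. The paper bypasses all of this with a uniform two-line argument: since $\chi_d$ is unramified at $\ell$ (including $\ell = 2$, as $d \equiv 1 \pmod 4$), N\'eron models commute with the unramified base change, so the geometric component groups $\Phi_{\mathcal{E}}$ and $\Phi_{\mathcal{E}^{(d)}}$ are identified, with the $\Frob_\ell$-action on the latter differing only by composition with $[-1]$. On $2$-torsion $[-1]$ acts trivially, hence $\Phi_{\mathcal{E}}(\mathbb{F}_\ell)[2] \cong \Phi_{\mathcal{E}^{(d)}}(\mathbb{F}_\ell)[2]$, and $c_\ell(E)$, $c_\ell(E^{(d)})$ have the same parity. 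This works for every $\ell \mid N$ simultaneously and requires no Kodaira classification and no special pleading at $\ell = 2$.

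So your outline is sound, but the perceived obstacle at $\ell = 2$ is illusory, and the hypotheses you invoke to address it (odd $c_2(E)$, odd Manin constant) play no role in this lemma's proof---they enter earlier, in establishing Corollary~\ref{cor:BSDoverK1}. Replacing your case analysis with the component-group-twist observation both completes and considerably shortens the argument.
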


\begin{proof}
First consider $\ell\mid N$. Let $\mathcal{E}$ and $\mathcal{E}^{(d)}$ be the N\'{e}ron model over $\mathbb{Z}_\ell$ of $E$ and $E^{(d)}$ respectively. Notice that $E^{(d)}/\mathbb{Q}_p$ is the unramified quadratic twist of $E^{(d)}$. Since N\'{e}ron models commute with unramified base change, we know that the component groups $\Phi_\mathcal{E}$ and $\Phi_{\mathcal{E}^{(d)}}$ are quadratic twists of each other as $\Gal(\overline{\mathbb{F}}_\ell/\mathbb{F}_\ell)$-modules. In particular, $\Phi_\mathcal{E}[2]\cong\Phi_{\mathcal{E}^{(d)}}[2]$ as $\Gal(\overline{\mathbb{F}}_\ell/\mathbb{F}_\ell)$-modules and thus $$\Phi_\mathcal{E}(\mathbb{F}_\ell)[2]\cong\Phi_{\mathcal{E}^{(d)}}(\mathbb{F}_\ell)[2].$$ It follows that $c_\ell(E)$ and $c_\ell(E^{(d)})$ have the same parity.

  Next consider $\ell\mid d$. Since $E^{(d)}$ has additive reduction and $\ell$ is odd, thus we know that $$E^{(d)}(\mathbb{Q}_\ell)[2]\cong \Phi_{\mathcal E^{(d)}}(\mathbb{F}_\ell)[2].$$ Since $\ell\in \mathcal{S}$, $\Frob_\ell$ is assumed to have order 3 acting on $E^{(d)}[2]\cong E[2]$, we know that $E^{(d)}(\mathbb{Q}_\ell)[2]=0$. Hence $c_\ell(E^{(d)})$ is odd.
\end{proof}

\begin{lemma}\label{lem:indivisibleby2d}
Assume BSD(2) is true for $E/K$. The right-hand side of (\ref{eq:grosszagier2}) is a 2-adic unit.
\end{lemma}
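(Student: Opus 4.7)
The plan is to mirror the proof of Lemma \ref{lem:indivisibleby2} applied to $E^{(d)}/K$, with Theorem \ref{thm:overK}(\ref{item:nonvanishing}) playing the role of Assumption (\ref{eq:star}). As a preparatory step I would check that $E^{(d)}(K)[2]=0$ and that the Manin constant $c_{E^{(d)}}$ is odd. The first follows exactly as in Lemma \ref{lem:indivisibleby2}: since $E^{(d)}[2]\cong E[2]$ has Galois image $S_3$ or $\mathbb{Z}/3\mathbb{Z}$, the representation stays irreducible upon restriction to $G_K$. For the second, $E^{(d)}$ admits no rational 2-isogeny, so \cite[Theorem A]{Abbes1996} handles good reduction at 2; multiplicative reduction at 2 is dealt with by \cite[p.270 (ii)]{Abbes1996} once one knows $c_2(E^{(d)})$ is odd, which follows from Lemma \ref{lem:tamagawa} applied at $\ell=2$ (the BSD(2)-for-$E/K$ hypothesis together with $c_2(E)$ odd makes that lemma usable); and additive reduction at 2 uses that $d\equiv 1\pmod 4$ makes $\mathbb{Q}_2(\sqrt{d})/\mathbb{Q}_2$ unramified, so the N\'eron models of $E$ and $E^{(d)}$ become isomorphic over $\mathbb{Z}_2^{\mathrm{unr}}$ and the odd-Manin-constant hypothesis on $E$ transfers to $E^{(d)}$.

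With these facts in hand, suppose for contradiction that the right-hand side $[E^{(d)}(K):\mathbb{Z}P^{(d)}]/c_{E^{(d)}}$ is not a 2-adic unit. Since $c_{E^{(d)}}$ is odd, this produces $Q\in E^{(d)}(K)$ with $2Q=m\cdot P^{(d)}$ for some odd integer $m$. Setting $\log_{E^{(d)}}:=\log_{\omega_{\mathcal{E}^{(d)}}}$ and using the definition $c_{E^{(d)}}\cdot\omega_{E^{(d)}}=\omega_{\mathcal{E}^{(d)}}$, one obtains, up to a 2-adic unit,
\[
\frac{|\tilde E^{(d),\mathrm{ns}}(\mathbb{F}_2)|\cdot\log_{\omega_{E^{(d)}}}P^{(d)}}{2}
=\frac{|\tilde E^{(d),\mathrm{ns}}(\mathbb{F}_2)|\cdot \log_{E^{(d)}}(Q)}{m\cdot c_{E^{(d)}}}.
\]
Lemma \ref{lem:tamagawa} guarantees $c_2(E^{(d)})$ is odd, so $|\tilde E^{(d),\mathrm{ns}}(\mathbb{F}_2)|\cdot Q$ lies in the formal group $\hat E^{(d)}(2\mathcal{O}_{K_2})$, forcing $|\tilde E^{(d),\mathrm{ns}}(\mathbb{F}_2)|\cdot \log_{E^{(d)}}(Q)\in 2\mathcal{O}_{K_2}$. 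Combined with the fact that $m$ and $c_{E^{(d)}}$ are both 2-adic units, the left-hand side would then lie in $2\mathcal{O}_{K_2}$, contradicting the nonvanishing asserted by Theorem \ref{thm:overK}(\ref{item:nonvanishing}).

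The principal obstacle is the additive-reduction-at-2 case in the preparatory step: the Manin-constant hypothesis in Theorem \ref{thm:2partBSD} is phrased only for $E$, and transferring it to the twist $E^{(d)}$ genuinely requires the congruence $d\equiv 1\pmod 4$ that defines the set $\mathcal{N}$. Once that transfer is granted, the rest of the argument is a faithful twist-theoretic copy of the $E/K$ case, with Theorem \ref{thm:overK}(\ref{item:nonvanishing}) providing the required input.
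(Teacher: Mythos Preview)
Your overall strategy matches the paper's exactly: reduce to the argument of Lemma~\ref{lem:indivisibleby2} with Theorem~\ref{thm:overK}(\ref{item:nonvanishing}) replacing Assumption~(\ref{eq:star}), after first checking that $E^{(d)}(K)[2]=0$, that $c_2(E^{(d)})$ is odd (via Lemma~\ref{lem:tamagawa}), and that the Manin constant $c_{E^{(d)}}$ is odd. The only substantive difference is in how you establish the oddness of $c_{E^{(d)}}$.

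The paper does not rerun the Abbes--Ullmo case analysis for $E^{(d)}$. Instead it argues uniformly via Stevens' twisting argument \cite[\S5]{Stevens1989}: since $E$ has no rational $2$-isogeny, the Manin constants (for both the $X_0$- and $X_1$-parametrizations) of every curve in the isogeny class of $E$ have the same $2$-adic valuation, so oddness of $c_E$ forces oddness of the $X_1$-optimal Manin constant $c_1$; Stevens' argument then shows $c_1^{(d)}$ is odd, and the same isogeny-class observation applied to $E^{(d)}$ gives $c_{E^{(d)}}$ odd. This works regardless of the reduction type at~$2$.

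Your case-by-case approach is fine for good and multiplicative reduction (the twist at $2$ is unramified, so the reduction type is preserved and Abbes--Ullmo applies to $E^{(d)}$ exactly as it did to $E$). But your treatment of the additive case has a real gap: the statement that ``the N\'eron models of $E$ and $E^{(d)}$ become isomorphic over $\mathbb{Z}_2^{\mathrm{unr}}$'' is a purely local fact about the N\'eron differential, whereas the Manin constant is the ratio between that differential and the pullback of $f^{(d)}(q)\,dq/q$ under the \emph{global} modular parametrization $X_0(Nd^2)\to E^{(d)}$. The local isomorphism says nothing about how this new parametrization compares to the old one, so the transfer you assert does not follow. This is precisely the content supplied by Stevens' twisting argument, which compares the two modular parametrizations directly.
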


\begin{proof}
  Since $E$ has no rational 2-torsion, we know that the Manin constants (with respect to both $X_0(N)$-parametrization and $X_1(N)$-parametrization) for all curves in the isogeny of $E$ have the same 2-adic valuation. The twisting argument of Stevens \cite[\S 5]{Stevens1989} shows that if the Manin constant $c_1$ for the $X_1(N)$-optimal curve in the isogeny class of $E$ is 1, then the Manin constant $c_1^{(d)}$ for the $X_1(N)$-optimal curve in the isogeny class of $E^{(d)}$ is also 1. The same twisting argument in fact shows that if $c_1$ is a 2-adic unit, then $c_1^{(d)}$ is also a 2-adic unit. Since $c_E$ is odd, we know that $c_1$ is odd, therefore $c_1^{(d)}$ is also odd. Since $E^{(d)}$ has no rational 2-torsion, it follows that the Manin constant $c_{E^{(d)}}$ is also odd.

Now using $c_2(E^{(d)})$ is odd  (by Lemma \ref{lem:tamagawa}) and $c_{E^{(d)}}$ is odd, and replacing $E$ by $E^{(d)}$ and replacing (\ref{eq:star}) by the conclusion of Theorem \ref{thm:overK}~(\ref{item:nonvanishing}), the same argument as in the proof of Lemma \ref{lem:indivisibleby2} shows that the right-hand side of (\ref{eq:grosszagier2}) is also a 2-adic unit.
\end{proof}

Again, since the left-hand side of (\ref{eq:grosszagier2}) is a product of integers, Lemma \ref{lem:indivisibleby2d} implies the following.

\begin{corollary}\label{cor:BSDoverK2}
   BSD(2) for $E^{(d)}/K$ is equivalent to that
\begin{center}
  all the local Tamagawa numbers $c_\ell(E^{(d)})$ are odd and $\Sha(E^{(d)}/K)[2]=0$.
\end{center}  
\end{corollary}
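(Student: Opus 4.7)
The plan is to follow verbatim the argument that deduced Corollary \ref{cor:BSDoverK1} from Lemma \ref{lem:indivisibleby2}, with $E^{(d)}/K$ in place of $E/K$ and Lemma \ref{lem:indivisibleby2d} in place of Lemma \ref{lem:indivisibleby2}.

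First I would record the structural inputs. By Theorem \ref{thm:overK}(\ref{item:nonvanishing}), the Heegner point $P^{(d)}$ is non-torsion and $E^{(d)}/K$ has both algebraic and analytic rank one, so the Gross--Zagier formula yields the reformulation (\ref{eq:grosszagier2}) of the BSD conjecture for $E^{(d)}/K$; Kolyvagin's theorem then guarantees that $\Sha(E^{(d)}/K)$ is finite, and the Cassels--Tate pairing shows that $|\Sha(E^{(d)}/K)|$ is a perfect square.

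Next I would invoke Lemma \ref{lem:indivisibleby2d}: under the running hypotheses of Theorem \ref{thm:2partBSD} and the assumption that BSD(2) holds for $E/K$, the right-hand side of (\ref{eq:grosszagier2}) is a 2-adic unit. Consequently BSD(2) for $E^{(d)}/K$ is equivalent to
$$v_2\Bigl(\prod_{\ell \mid Nd^2} c_\ell(E^{(d)})\Bigr) + \tfrac{1}{2}\,v_2\bigl(|\Sha(E^{(d)}/K)|\bigr) = 0.$$
Both summands on the left are non-negative integers (the second one is an integer precisely because $|\Sha(E^{(d)}/K)|$ is a perfect square), so each must vanish individually. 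That is exactly the assertion that every Tamagawa number $c_\ell(E^{(d)})$ is odd and that $\Sha(E^{(d)}/K)[2]=0$. The converse direction is immediate, since under those two conditions the left-hand side of (\ref{eq:grosszagier2}) is a 2-adic unit, matching the right-hand side already supplied by Lemma \ref{lem:indivisibleby2d}.

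I do not expect a genuine obstacle: all of the real work lives upstream, in Lemma \ref{lem:tamagawa} (controlling the additive reduction primes $\ell \mid d$), in the Stevens twisting argument for the Manin constant of $E^{(d)}$, and in Theorem \ref{thm:overK}(\ref{item:nonvanishing}), which transports Assumption (\ref{eq:star}) from $E$ to $E^{(d)}$ via the mod $2$ congruence of Heegner logarithms. Once these are in hand, Corollary \ref{cor:BSDoverK2} is a one-line rearrangement of the 2-adic integrality statement in Lemma \ref{lem:indivisibleby2d}.
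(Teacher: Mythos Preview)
Your proposal is correct and follows the same route as the paper: the paper's derivation of this corollary is literally the one-sentence observation that the left-hand side of (\ref{eq:grosszagier2}) is a product of integers, so Lemma~\ref{lem:indivisibleby2d} forces the equivalence. Your write-up simply makes explicit the auxiliary facts (rank one, finiteness and square order of $\Sha$, the implicit hypothesis that BSD(2) holds for $E/K$) that the paper leaves tacit.
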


\subsection{2-Selmer groups over $K$} Now let us compare the 2-Selmer groups of $E/K$ and $E^{(d)}/K$.

\begin{lemma}\label{lem:2selmer}
Assume BSD(2) is true for $E/K$. The isomorphism of Galois representations $E[2]\cong E^{(d)}[2]$ induces an isomorphism of 2-Selmer groups $$\Sel_2(E/K)\cong \Sel_2(E^{(d)}/K).$$ In particular, $$\Sha(E^{(d)}/K)[2]=0.$$
\end{lemma}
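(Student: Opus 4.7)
The plan is to view both $\Sel_2(E/K)$ and $\Sel_2(E^{(d)}/K)$ as subgroups of the common group $H^1(K, E[2]) = H^1(K, E^{(d)}[2])$ via the canonical $G_\mathbb{Q}$-equivariant identification $E[2] = E^{(d)}[2]$ (since $\chi_d$ has exponent $2$ and hence acts trivially on $2$-torsion), and to verify place by place that the local conditions $\mathcal{L}_v(E), \mathcal{L}_v(E^{(d)}) \subset H^1(K_v, E[2])$ coincide. Three ``easy'' families of places are dispatched at once. The unique archimedean place of $K$ is complex, so $H^1 = 0$. At $v$ above an odd prime $\ell \nmid 2Nd$, both curves have good reduction and both local conditions are the unramified subspace, preserved by the identification. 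At $v$ above a prime $\ell \in \mathcal{S}$ dividing $d$, the defining property of $\mathcal{S}$ makes $\Frob_\ell$ act on $E[2]$ with order $3$, and since $\ell \nmid N$ the module $E[2]$ is unramified at $v$; thus $E[2]^{G_{K_v}} = 0$, and Tate local duality with the Euler characteristic formula forces $H^1(K_v, E[2]) = 0$. These are the \emph{silent primes} of Mazur--Rubin \cite{Mazur2015}.

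The remaining places are $v \mid \ell$ with $\ell \mid N$ odd, and $v \mid 2$. At each such $v$, the twist character $\chi_d$ is unramified ($\ell \nmid d$ when $\ell \mid N$, since $d$ is a product of primes in $\mathcal{S}$; at $\ell = 2$ the hypothesis $d \equiv 1 \pmod 4$ suffices), so $E^{(d)}/K_v$ is the unramified quadratic twist of $E/K_v$. The plan is to match the two Kummer images directly. For odd $\ell$, the oddness of $c_\ell(E)$ and $c_\ell(E^{(d)})$ (Lemma~\ref{lem:tamagawa}), combined with the fact that the formal group is uniquely $2$-divisible at odd residue characteristic, shrinks both Kummer images to the contribution from the identity component of the N\'{e}ron model; since $E/K_v$ and $E^{(d)}/K_v$ become isomorphic over the unramified quadratic extension, the two images then coincide inside $H^1(K_v, E[2])$. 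At $v \mid 2$ (where $K_v = \mathbb{Q}_2$ by Assumption~(\ref{eq:star})), a parallel argument leveraging the oddness of $c_2(E)$ and of the Manin constant (as in Lemma~\ref{lem:indivisibleby2d}) controls the Kummer image. I expect this case to be the main technical obstacle, since residue characteristic $2$ makes the formal-group bookkeeping substantially more delicate than in the odd case.

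Once $\Sel_2(E/K) \cong \Sel_2(E^{(d)}/K)$ is in hand, the conclusion is immediate. BSD(2) for $E/K$ and Corollary~\ref{cor:BSDoverK1} give $\Sha(E/K)[2] = 0$, so the $2$-descent sequence identifies $\Sel_2(E/K) \cong E(K)/2E(K)$, which is $1$-dimensional over $\mathbb{F}_2$ (spanned by $P$: the Heegner point is nontorsion since $E(K)[2] = 0$ and $E(K)$ has rank $1$, and it is indivisible by $2$ by Lemma~\ref{lem:indivisibleby2}). Hence $\Sel_2(E^{(d)}/K)$ is also $1$-dimensional. By Theorem~\ref{thm:overK}(\ref{item:nonvanishing}) and Lemma~\ref{lem:indivisibleby2d}, the point $P^{(d)} \in E^{(d)}(K)$ is nontorsion and indivisible by $2$, so its class spans a $1$-dimensional subspace of $E^{(d)}(K)/2E^{(d)}(K) \hookrightarrow \Sel_2(E^{(d)}/K)$; comparing dimensions forces this injection to be an equality, and therefore $\Sha(E^{(d)}/K)[2] = 0$.
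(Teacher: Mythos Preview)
Your outline is right at the archimedean place, at the silent primes $v\mid d$, and in the final deduction of $\Sha(E^{(d)}/K)[2]=0$. But there is a genuine gap at the remaining primes. At odd $\ell\mid N$ you assert that because $E/K_v$ and $E^{(d)}/K_v$ become isomorphic over the unramified quadratic extension $L_v$, their Kummer images in $H^1(K_v,E[2])$ coincide; this does not follow. The identification $E[2]=E^{(d)}[2]$ is of $G_{K_v}$-modules, but the two boundary maps $E(K_v)/2\to H^1(K_v,E[2])$ and $E^{(d)}(K_v)/2\to H^1(K_v,E[2])$ come from genuinely different short exact sequences over $K_v$, and an isomorphism over $L_v$ only matches the images after restriction to $H^1(L_v,E[2])$. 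At $v\mid 2$ you stop short of any argument, correctly anticipating that the formal-group bookkeeping breaks down in residue characteristic~$2$; as written this case is simply missing.

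The paper sidesteps both problems by treating \emph{all} $v\nmid d\infty$ with a single argument that never touches the formal group. By \cite[Lemma~2.9]{Mazur2010} the intersection $\mathcal{L}_v(E/K)\cap\mathcal{L}_v(E^{(d)}/K)$ is the image of the norm group $\mathbb{N}E(L_v)$ in $E(K_v)/2E(K_v)$, so the two local conditions (both maximal isotropic of the same dimension) coincide iff $E(K_v)/\mathbb{N}E(L_v)=0$. By local duality this reduces to $H^1(\Gal(L_v/K_v),E(L_v))=0$. Since $L_v/K_v$ is unramified, Lang's theorem (\cite[Prop.~4.3]{Mazur1972}) gives $H^1(\Gal(L_v/K_v),E^0(L_v))=0$; and the oddness of $c_\ell(E)$ forces $\Phi_{\mathcal E}(\mathbb{F}_{\ell^2})[2]=0$ (an order-two automorphism of a nonzero $\mathbb{F}_2$-space has a fixed vector, so $\Phi_{\mathcal E}(\mathbb{F}_\ell)[2]=0$ propagates), whence the component-group contribution vanishes too. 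This works verbatim at $v\mid 2$---no formal-group analysis is needed---which is exactly where your approach stalls.
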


\begin{proof}
The 2-Selmer group $\Sel_2(E/K)$ is defined by the local Kummer conditions $$\mathcal{L}_v(E/K)=\mathrm{im}\left(E(K_v)/2 E(K_v)\rightarrow H^1(K_v, E[2])\right).$$ Denote by $\mathcal{L}_v(E^{(d)}/K)$ the local Kummer conditions for $E^{(d)}/K$. It suffices to show that $\mathcal{L}_v(E/K)=\mathcal{L}_v(E^{(d)}/K)$ are the same at all places $v$ of $K$:
  \begin{enumerate}
  \item $v\mid\infty$: Since $v$ is complex, $H^1(K_v, E[2])=0$. So $\mathcal{L}_v(E/K)=\mathcal{L}_v(E^{(d)}/K)=0$.
  \item $v\mid d$: Suppose $v$ lies above $\ell \in\mathcal{S}$. Since $\Frob_\ell$ acts by order 3 on $E[2]$, we know that the unramified cohomology $$H^1_\mathrm{ur}(\mathbb{Q}_\ell, E[2])\cong E[2]/(\Frob_\ell-1)E[2]=0$$ (such $\ell$ is called \emph{silent} by Mazur--Rubin), and thus $\dim H^1(\mathbb{Q}_\ell,E[2])=2\dim H^1_\mathrm{ur}(\mathbb{Q}_\ell,E[2])=0$ (\cite[I.2.6]{Milne1986}). Since $\ell$ is split in $K$, it follows that $$H^1(K_v, E[2])\cong H^1(\mathbb{Q}_\ell, E[2])=0,$$   So $\mathcal{L}_v(E/K)=\mathcal{L}_v(E^{(d)}/K)=0$.
  \item $v\nmid d\infty$: By \cite[Lemma 2.9]{Mazur2010}, we have $$\mathcal{L}_v(E/K)\cap \mathcal{L}_v(E^{(d)}/K)=E_\mathbb{N}(K_v)/2E(K_v),$$ where $$E_\mathbb{N}(K_v)=\mathrm{im}\left(\mathbb{N}: E(L_v)\rightarrow E(K_v)\right)$$ is the image of the norm map induced from the quadratic extension $L_v=K_v(\sqrt{d})$ over $K_v$. To show that $\mathcal{L}_v(E/K)=\mathcal{L}_v(E^{(d)}/K)$, it suffices to show that $$E(K_v)/\mathbb{N}E(L_v)=0.$$ By local Tate duality, it suffices to show that $$H^1(\Gal(L_v/K_v), E(L_v))=0.$$ Notice that $K_v\cong \mathbb{Q}_\ell$ and $L_v/K_v$ is the unramified quadratic extension, we know that $$E(L_v)/E^0(L_v)\cong\Phi_{\mathcal{E}}(\mathbb{F}_{\ell^2}),$$ where $\Phi_\mathcal{E}$ is the component group of the N\'{e}ron model of $E$ over $\mathbb{Z}_\ell$. Let $c\in \Gal(\mathbb{F}_{\ell^2}/\mathbb{F}_\ell)$ be the order two automorphism, then $\Phi_\mathcal{E}(\mathbb{F}_{\ell^2})[2]^c=\Phi_\mathcal{E}(\mathbb{F}_\ell)[2]$. Since $c_\ell(E)$ is odd, it follows that $\Phi_\mathcal{E}(\mathbb{F}_{\ell^2})[2]^c=\Phi_\mathcal{E}(\mathbb{F}_\ell)[2]=0$. Since an order two automorphism on a nonzero $\mathbb{F}_2$-vector space must have a nonzero fixed vector, we know that $\Phi_\mathcal{E}(\mathbb{F}_{\ell^2})[2]=0$. Therefore $E(L_v)/E^0(L_v)$ has odd order. It remains to show that $$H^1(\Gal(L_v/K_v), E^0(L_v))=0,$$ which is true by Lang's theorem since $L_v/K_v$ is unramified (see \cite[Prop.~4.3]{Mazur1972}).\qedhere
  \end{enumerate}
\end{proof}

\subsection{Proof of Theorem \ref{thm:2partBSD} (\ref{item:2partK})} It follows immediately from Corollary \ref{cor:BSDoverK2}, Lemma \ref{lem:tamagawa} and Lemma \ref{lem:2selmer}.

\section{The 2-part of the BSD conjecture over $\mathbb{Q}$}
\label{sec:proof-5}

Let $E$ and $K$ be as in Theorem \ref{thm:2partBSD}. Let $d\in \mathcal{N}$.

\subsection{2-Selmer groups over $\mathbb{Q}$} Let us begin by comparing the 2-Selmer groups of $E/\mathbb{Q}$ and $E^{(d)}/\mathbb{Q}$.

\begin{lemma}\label{lem:2selmerQ} Let $\Delta(E)$ be the discriminant of a Weierstrass equation of $E/\mathbb{Q}$.
  \begin{enumerate}
  \item \label{item:1} If $\Delta(E)<0$, then $\Sel_2(E/\mathbb{Q})\cong\Sel_2(E^{(d)}/\mathbb{Q})$.
  \item \label{item:2} If $\Delta(E)>0$ and $d>0$, then $\Sel_2(E/\mathbb{Q})\cong\Sel_2(E^{(d)}/\mathbb{Q})$.
  \item \label{item:3} If $\Delta(E)>0$ and $d<0$, then $\dim_{\mathbb{F}_2} \Sel_2(E/\mathbb{Q})$ and $\dim_{\mathbb{F}_2}\Sel_2(E^{(d)}/\mathbb{Q})$ differ by 1.
  \end{enumerate}
\end{lemma}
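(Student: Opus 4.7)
The strategy is to compare the local Kummer conditions $\mathcal{L}_v(E/\mathbb{Q})$ and $\mathcal{L}_v(E^{(d)}/\mathbb{Q})$ at each place $v$, viewed as subspaces of $H^1(\mathbb{Q}_v,E[2])$ under the isomorphism $E[2]\cong E^{(d)}[2]$. At every finite prime $\ell$ the two conditions agree: if $\ell\mid d$ (so $\ell\in\mathcal{S}$), the silent-prime computation from the proof of Lemma \ref{lem:2selmer} gives $H^1(\mathbb{Q}_\ell,E[2])=0$ and both sides vanish; if $\ell\nmid d$ (which includes $\ell=2$, since $d\equiv 1\pmod{4}$ makes $\mathbb{Q}_2(\sqrt{d})/\mathbb{Q}_2$ unramified), the argument in the third bullet of Lemma \ref{lem:2selmer}---Mazur--Rubin's Lemma 2.9, Lang's theorem, and the already-established oddness of $c_\ell(E)$---transfers verbatim to give $\mathcal{L}_\ell(E/\mathbb{Q})=\mathcal{L}_\ell(E^{(d)}/\mathbb{Q})$.

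The three cases are therefore distinguished entirely by the archimedean place. Using $\Delta(E^{(d)})=d^{6}\Delta(E)$, the sign of the discriminant is preserved, so $E(\mathbb{R})$ and $E^{(d)}(\mathbb{R})$ have the same number of connected components. When $\Delta(E)<0$, both real loci are connected and $\mathcal{L}_\infty(E)=\mathcal{L}_\infty(E^{(d)})=0$, settling Case (\ref{item:1}). When $\Delta(E)>0$ and $d>0$, $\sqrt{d}\in\mathbb{R}$ gives $E\cong E^{(d)}$ over $\mathbb{R}$ and hence $\mathcal{L}_\infty(E)=\mathcal{L}_\infty(E^{(d)})$ under the identification, settling Case (\ref{item:2}). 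When $\Delta(E)>0$ and $d<0$, complex conjugation acts trivially on $E[2]$ (since $\mathbb{Q}(\sqrt{\Delta(E)})$ is real), so $H^1(\mathbb{R},E[2])\cong(\mathbb{F}_2)^{2}$ and each Kummer image is an order-$2$ Lagrangian; the twist changes the Kummer image by the sign character $\chi_d$, which is nontrivial, so $\mathcal{L}_\infty(E)$ and $\mathcal{L}_\infty(E^{(d)})$ are distinct Lagrangians meeting trivially.

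In Cases (\ref{item:1}) and (\ref{item:2}) all local conditions agree, immediately yielding $\Sel_2(E/\mathbb{Q})\cong\Sel_2(E^{(d)}/\mathbb{Q})$. In Case (\ref{item:3}) the two self-dual Selmer systems differ only at $\infty$ by complementary Lagrangians, and a standard Poitou--Tate/Wiles comparison formula (equivalently the Mazur--Rubin twisting framework of \cite{Mazur2010}) forces the $\mathbb{F}_2$-dimensions to differ by exactly $1$. As an independent cross-check, inflation-restriction (using $E(K)[2]=0$) gives an isomorphism $\Sel_2(E^{(d)}/K)\cong\Sel_2(E^{(d)}/\mathbb{Q})\oplus\Sel_2(E^{(d\cdot d_K)}/\mathbb{Q})$ up to a controlled error, and Lemma \ref{lem:2selmer} pins the left side at dimension $1$, constraining the two summands to have dimensions $0$ and $1$.

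The main obstacle is Case (\ref{item:3}): verifying that the two archimedean Kummer images are genuinely distinct Lagrangians (which requires carefully tracking the effect of the sign character $\chi_d$ on the Kummer map $E(\mathbb{R})/2E(\mathbb{R})\hookrightarrow H^1(\mathbb{R},E[2])$) and then converting that single local discrepancy into a \emph{precise} global Selmer dimension change of exactly $1$ rather than $0$ or $2$, which requires the self-duality of both Selmer structures under the Weil pairing.
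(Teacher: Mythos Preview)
Your proof is correct and follows essentially the same route as the paper: both reduce to comparing local Kummer conditions, import the finite-place equality verbatim from Lemma~\ref{lem:2selmer}, distinguish the three cases by the archimedean condition via the sign of $\Delta(E)$ (the paper phrases case~(\ref{item:1}) as $H^1(\Gal(\mathbb{C}/\mathbb{R}),E[2])=0$ rather than connectedness of $E(\mathbb{R})$, but this is the same content), and then invoke global duality for case~(\ref{item:3}) (the paper cites \cite[Lemma~8.5]{LeHung2015} where you cite the Mazur--Rubin framework). One caution: your ``independent cross-check'' via a decomposition $\Sel_2(E^{(d)}/K)\cong\Sel_2(E^{(d)}/\mathbb{Q})\oplus\Sel_2(E^{(d\cdot d_K)}/\mathbb{Q})$ is not reliable at $p=2$, since the eigenspace splitting under $\Gal(K/\mathbb{Q})$ fails without inverting~$2$; fortunately you do not need it.
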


\begin{proof}
  By the same proof as Lemma \ref{lem:2selmer}, we know that $\mathcal{L}_v(E/\mathbb{Q})=\mathcal{L}_v(E^{(d)}/\mathbb{Q})$ for any place $v\nmid\infty$ of $\mathbb{Q}$. The only issue is that the local condition at $\infty$ may differ for $E/\mathbb{Q}$ and $E^{(d)}/\mathbb{Q}$. By \cite[p.305]{Serre1972}, we have $\mathbb{Q}(\sqrt{\Delta(E)})\subseteq \mathbb{Q}(E[2])$. So complex conjugation acts nontrivially on $E[2]$ if and only if $\Delta(E)<0$. Hence  $$\dim_{\mathbb{F}_2} H^1(\Gal(\mathbb{C}/\mathbb{R}), E[2])=
  \begin{cases}
    0, & \Delta(E)<0, \\
    2, & \Delta(E)>0.
  \end{cases}
$$ The item (\ref{item:1}) follows immediately. When $\Delta(E)>0$,  $\mathcal{L}_\infty(E/\mathbb{Q})=E(\mathbb{R})/2 E(\mathbb{R})$ and $\mathcal{L}_\infty(E^{(d)}(\mathbb{R})=E^{(d)}(\mathbb{R})/2 E^{(d)}(\mathbb{R})$ define the same line in $H^1(\Gal(\mathbb{C}/\mathbb{R}),E[2])$ if and only if $d>0$. The item (\ref{item:2}) follows immediately and the item (\ref{item:3}) follows from a standard application of global duality (e.g., by \cite[Lemma 8.5]{LeHung2015}).
\end{proof}

We immediately obtain a more explicit description of the condition $\chi_d(-N)=1$ in Theorem \ref{thm:overK} (\ref{item:rootnumber}) under our extra assumption that $c_2(E)$ is odd.

\begin{corollary}\label{cor:disc} The following conditions are equivalent.
  \begin{enumerate}
  \item $E^{(d)}/\mathbb{Q}$ has the same rank as $E/\mathbb{Q}$.
  \item $\chi_d(-N)=1$, where $\chi_d$ is the quadratic character associated to $\mathbb{Q}(\sqrt{d})/\mathbb{Q}$.
  \item $\Delta(E)<0$, or $\Delta(E)>0$ and $d>0$.
  \end{enumerate}
\end{corollary}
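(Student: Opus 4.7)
The equivalence (1)$\iff$(2) is immediate from Theorem~\ref{thm:overK}~(\ref{item:rootnumber}): the root-number identity $\varepsilon(E^{(d)}/\mathbb{Q}) = \chi_d(-N)\,\varepsilon(E/\mathbb{Q})$, together with the fact that both $E/\mathbb{Q}$ and $E^{(d)}/\mathbb{Q}$ have algebraic rank in $\{0,1\}$ (the Heegner point on $E/K$ is nontorsion under $(\bigstar)$, and Gross--Zagier--Kolyvagin bounds the $\mathbb{Q}$-ranks by one), gives equal ranks iff equal root numbers iff $\chi_d(-N) = 1$.

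For (2)$\iff$(3), my plan is a direct Kronecker-symbol computation of $\chi_d(-N) = \mathrm{sign}(d)\cdot\chi_d(N)$. The first step is to replace $N$ by $|\Delta(E)|$. Under our hypotheses every local Tamagawa number $c_\ell(E)$ is odd (at odd bad primes by Assumption~(\ref{eq:star}), at $\ell=2$ by the extra hypothesis, supplemented by the oddness of the Manin constant in the additive-at-$2$ case), and a Kodaira-type check shows that oddness of $c_\ell$ forces $v_\ell(N) \equiv v_\ell(\Delta(E)) \pmod{2}$ at every bad prime. Hence $\chi_d(N/|\Delta(E)|)$ is a product of $\pm 1$'s raised to even powers, so $\chi_d(N) = \chi_d(|\Delta(E)|)$.

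The second step computes $\chi_d(\Delta(E))$. Let $D$ be the fundamental discriminant of $\mathbb{Q}(\sqrt{\Delta(E)})$, with $D=1$ when $\mathrm{Gal}(\mathbb{Q}(E[2])/\mathbb{Q}) \cong \mathbb{Z}/3\mathbb{Z}$ (so $\Delta(E)$ is a square). Then $\Delta(E)/D$ is a rational square, whence $\chi_d(\Delta(E)) = \chi_d(D)$. For each $\ell \mid d$, the condition $\ell \in \mathcal{S}$ (i.e. $\mathrm{Frob}_\ell$ of order $3$ on $E[2]$) translates to $\ell$ splitting in $\mathbb{Q}(\sqrt{\Delta(E)})$, giving $\chi_D(\ell) = 1$; combined with $d \equiv 1 \pmod{4}$ and $\chi_D(-1) = \mathrm{sign}(\Delta(E))$, this yields $\chi_D(d) = 1$ if $d > 0$ and $\chi_D(d) = \mathrm{sign}(\Delta(E))$ if $d < 0$. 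Applying the reciprocity identity $\chi_d(D)\,\chi_D(d) = (-1)^{\mathbf{1}_{d<0}\mathbf{1}_{D<0}}$ for two coprime fundamental discriminants, a short case check gives $\chi_d(D) = 1$ in every sign combination. Therefore
\[ \chi_d(|\Delta(E)|) = \chi_d(\Delta(E))\cdot\chi_d(\mathrm{sign}(\Delta(E)))^{-1} = \begin{cases} 1, & \Delta(E) > 0,\\ \mathrm{sign}(d), & \Delta(E) < 0,\end{cases} \]
and multiplying by $\mathrm{sign}(d)$ gives $\chi_d(-N) = \mathrm{sign}(d)$ when $\Delta(E) > 0$ and $\chi_d(-N) = 1$ when $\Delta(E) < 0$, matching condition (3) exactly.

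The main technical obstacle lies in the Kodaira-type bookkeeping at $\ell = 2$ and $\ell = 3$: verifying $v_\ell(N) \equiv v_\ell(\Delta(E)) \pmod{2}$ from the oddness of $c_\ell$ is clean at $\ell > 3$ (where Kodaira types dictate $v_\ell(N) \in \{1,2\}$ and $v_\ell(\Delta)$ has the matching parity) but requires going through the wildly ramified cases one by one at the small primes.
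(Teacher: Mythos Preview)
Your argument for (1)$\iff$(2) coincides with the paper's. For (2)$\iff$(3), however, you take a genuinely different route. The paper argues indirectly: it invokes Monsky's $2$-parity theorem to convert ``equal root numbers'' into ``equal $2$-Selmer parity'', and then appeals to Lemma~\ref{lem:2selmerQ}, which compares $\Sel_2(E/\mathbb{Q})$ and $\Sel_2(E^{(d)}/\mathbb{Q})$ place by place (the archimedean contribution being governed exactly by the sign of $\Delta(E)$ and of $d$). Your approach instead computes $\chi_d(-N)$ directly via Kronecker symbols, using that $v_\ell(N)\equiv v_\ell(\Delta)\pmod 2$ when $c_\ell$ is odd, and then the splitting behaviour of the primes $\ell\mid d$ in $\mathbb{Q}(\sqrt{\Delta(E)})$.

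Both approaches are valid and both tacitly rely on the same auxiliary fact that all $c_\ell(E)$ are odd (the paper needs this inside the proof of Lemma~\ref{lem:2selmerQ} via Lemma~\ref{lem:2selmer}; you need it for the parity-of-valuation step). Your route has the advantage of avoiding the nontrivial input of the $2$-parity conjecture; the paper's route has the advantage of recycling Lemma~\ref{lem:2selmerQ}, which is needed anyway for the BSD(2) application, so no new computation is required.

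Regarding your ``main technical obstacle'' at $\ell\in\{2,3\}$: there is in fact no obstacle. Ogg's conductor--discriminant formula $v_\ell(N)=v_\ell(\Delta_{\min})+1-m_\ell$ (Silverman, \emph{Advanced Topics}, IV.11.1) holds at \emph{every} prime, including $2$ and $3$, where $m_\ell$ is the number of components of the special fibre of the minimal regular model. A glance at the Kodaira table shows that the only types with $m_\ell$ even are $III$, $I_n^*$ ($n\ge 1$), and $III^*$, and each of these has even Tamagawa number. Hence $c_\ell$ odd forces $m_\ell$ odd, and therefore $v_\ell(N)\equiv v_\ell(\Delta_{\min})\pmod 2$ uniformly in $\ell$; no separate wild-ramification casework is needed. (Since any Weierstrass discriminant differs from $\Delta_{\min}$ by a $12$th power, parities are unaffected by the choice of model; and since $\Delta_{\min}$ is supported on primes dividing $N$, it is coprime to $d$, so the Kronecker-symbol manipulations are legitimate.)
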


\begin{proof}
  Since the parity conjecture for 2-Selmer groups of elliptic curves is known (\cite[Theorem 1.5]{Monsky1996}), we know that $E/\mathbb{Q}$ and $E^{(d)}/\mathbb{Q}$ has the same root number if and only if they have the same 2-Selmer rank. The result then follows from Lemma \ref{lem:2selmerQ} and Theorem \ref{thm:overK} (\ref{item:rootnumber}).
\end{proof}

\subsection{Rank zero twists}
Let $K$ be as in Theorem \ref{thm:2partBSD}.  We now verify BSD(2) for the rank zero twists. 

\begin{lemma}\label{lem:rankzero2part}
If BSD(2) is true for $E/\mathbb{Q}$ and $E^{(d_K)}/\mathbb{Q}$, then BSD(2) is true for  all twists $E^{(d)}/\mathbb{Q}$ and $E^{(d\cdot d_K)}/\mathbb{Q}$ of rank zero, where $d\in\mathcal{N}$ with $\chi_d(-N)=1$.
\end{lemma}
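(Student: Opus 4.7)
The plan is to reduce BSD(2) for the rank-zero member of $\{E^{(d)}, E^{(d\cdot d_K)}\}$ to a matching of 2-adic valuations, and then to combine three inputs: (i) the 2-Selmer isomorphism of Lemma \ref{lem:2selmerQ}, (ii) the assumed BSD(2) for $E/\mathbb{Q}$, and (iii) Zhai's modular-symbol formula \cite{Zhai2016} for the 2-part of algebraic rank-zero $L$-values of quadratic twists. By the symmetry exchanging $(E, d)$ with $(E^{(d_K)}, d\cdot d_K)$, and because $\chi_d(-N) = 1$ matches the rank of $E^{(d)}$ with that of $E$ (Corollary \ref{cor:disc}), I may assume $E/\mathbb{Q}$ has rank zero, so $E^{(d)}/\mathbb{Q}$ is the intended rank-zero twist.

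Since $E^{(d)}[2] \cong E[2]$ as $G_\mathbb{Q}$-modules, $E^{(d)}(\mathbb{Q})[2] = 0$ and $|E^{(d)}(\mathbb{Q})_{\mathrm{tor}}|$ is odd. BSD(2) for $E/K$ follows from the two assumed rational BSD(2)'s by multiplicativity of $L(E/K,s) = L(E/\mathbb{Q},s)L(E^{(d_K)}/\mathbb{Q},s)$ together with the matching factorization of the arithmetic invariants; Lemma \ref{lem:tamagawa} then forces $\prod_\ell c_\ell(E^{(d)})$ to be odd. The rank-zero BSD(2) identity for $E^{(d)}/\mathbb{Q}$ thus collapses to
$$\mathrm{ord}_2\left(\frac{L(E^{(d)},1)}{\Omega(E^{(d)})}\right) = \mathrm{ord}_2|\Sha(E^{(d)}/\mathbb{Q})|.$$
On the algebraic side, the Kummer sequence with rank zero gives $|\Sha(E^{(d)}/\mathbb{Q})[2]| = |\Sel_2(E^{(d)}/\mathbb{Q})|$. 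Lemma \ref{lem:2selmerQ}(\ref{item:1})--(\ref{item:2}), whose hypothesis is precisely $\chi_d(-N) = 1$, identifies this with $|\Sel_2(E/\mathbb{Q})| = |\Sha(E/\mathbb{Q})[2]|$, and BSD(2) for $E$ then equates its 2-adic valuation with $\mathrm{ord}_2(L(E,1)/\Omega(E))$.

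On the analytic side, Zhai's formula expresses $\mathrm{ord}_2(L(E^{(d)},1)/\Omega(E^{(d)}))$ as a combination of twisted modular symbols attached to the newform of $E$. The crucial local input is that every prime $\ell \mid d$ lies in $\mathcal{S}$, so $\Frob_\ell$ acts with order three on $E[2]$ — that is, $\ell$ is silent in the sense of \cite{Mazur2015}; this silence forces each local contribution in Zhai's sum to be a 2-adic unit, so the formula collapses to
$$\mathrm{ord}_2\left(\frac{L(E^{(d)},1)}{\Omega(E^{(d)})}\right) = \mathrm{ord}_2\left(\frac{L(E,1)}{\Omega(E)}\right).$$
Chaining with the Selmer side yields the desired matching at the level of $\Sha[2]$.

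The main obstacle is the final step, promoting this equality from $\Sha[2]$ to all of $\Sha[2^\infty]$, since a priori higher 2-power torsion in $\Sha(E^{(d)}/\mathbb{Q})$ could enlarge its order. The resolution combines the Cassels-Tate pairing — which makes $|\Sha(E^{(d)}/\mathbb{Q})[2^\infty]|/|\Sha(E^{(d)}/\mathbb{Q})[2]|$ a perfect square — with BSD(2) for $E$ itself (which pins down $\mathrm{ord}_2|\Sha(E/\mathbb{Q})|$): any genuine enlargement of $\mathrm{ord}_2|\Sha(E^{(d)}/\mathbb{Q})|$ beyond $\mathrm{ord}_2|\Sha(E^{(d)}/\mathbb{Q})[2]|$ would push the algebraic side above the analytic 2-adic value already computed, producing a contradiction. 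Making this precise requires either promoting Lemma \ref{lem:2selmerQ} to an isomorphism of $\Sel_{2^k}$ for higher $k$, or a secondary 4-descent in the style of Tian \cite{Tian2014}, to show that the silent primes $\ell \mid d$ do not introduce new 4-torsion in $\Sha$ beyond what is already present on the $E$-side. This is the delicate technical point on which the argument ultimately rests; once accomplished, the equality $\mathrm{ord}_2|\Sha(E^{(d)}/\mathbb{Q})[2^\infty]| = \mathrm{ord}_2|\Sha(E^{(d)}/\mathbb{Q})[2]|$ closes the loop and establishes BSD(2) for $E^{(d)}/\mathbb{Q}$.
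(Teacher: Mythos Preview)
Your outline tracks the paper's---Zhai for the analytic side, the $2$-Selmer comparison (Lemma \ref{lem:2selmerQ}) for the algebraic side---but you miss the decisive simplification and instead leave a genuine gap. Under the standing hypotheses of \S\ref{sec:proof-5} (Assumption (\ref{eq:star}) and $c_2(E)$ odd, as in Theorem \ref{thm:2partBSD}), the Heegner point is indivisible by $2$ (Lemma \ref{lem:indivisibleby2}); once BSD(2) for $E/K$ is in hand (as you correctly derive from the lemma's hypothesis), Corollary \ref{cor:BSDoverK1} forces all $c_\ell(E)$ to be odd and $\Sha(E/K)[2]=0$. It follows that $\Sel_2(E/\mathbb{Q})=\Sha(E/\mathbb{Q})[2]=0$ in the rank-zero case, and then Lemma \ref{lem:2selmerQ} gives $\Sel_2(E^{(d)}/\mathbb{Q})=0$, hence $\Sha(E^{(d)}/\mathbb{Q})[2]=0$, hence $\Sha(E^{(d)}/\mathbb{Q})[2^\infty]=0$. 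The ``main obstacle'' you identify---promoting from $\Sha[2]$ to $\Sha[2^\infty]$---does not arise: both groups are trivial.

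With this in place the paper's argument is immediate: BSD(2) for $E$ says precisely that $L(E,1)/\Omega(E)$ is a $2$-adic unit (Tamagawa numbers, torsion, and $\Sha$ all contribute trivially); Zhai's theorem, applied under the sign condition supplied by Corollary \ref{cor:disc}, propagates this to $L(E^{(d)},1)/\Omega(E^{(d)})$ being a $2$-adic unit; and the arithmetic side for $E^{(d)}$ is likewise a $2$-adic unit by Lemma \ref{lem:tamagawa} and the Selmer vanishing just established. Your attempt to match possibly nonzero $2$-adic valuations---and your proposed recourse to a secondary $4$-descent or to $\Sel_{2^k}$ comparisons for $k>1$---is therefore unnecessary and, as you yourself concede, not actually carried out. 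A smaller issue: Zhai's result as cited preserves the property of being a $2$-adic unit rather than establishing equality of arbitrary $2$-adic valuations, so your stronger reading of it would also need independent justification.
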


\begin{proof}
  Notice exactly one of $E/\mathbb{Q}$ and $E^{(d_K)}/\mathbb{Q}$ has rank zero.  Consider the case that $E/\mathbb{Q}$ has rank zero. Since all the local Tamagawa numbers $c_\ell(E)$ are odd and $\Sha(E/\mathbb{Q})[2]=0$, BSD(2) for $E/\mathbb{Q}$ implies that $$\frac{L(E/\mathbb{Q},1)}{\Omega(E/\mathbb{Q})}$$ is a 2-adic unit. Assume $\chi_d(-N)=1$. We know from Corollary \ref{cor:disc} that $\Delta(E)<0$, or $\Delta(E)>0$ and $d>0$. Under these conditions, it follows from \cite[Theorem 1.1, 1.3]{Zhai2016} that $$\frac{L(E^{(d)}/\mathbb{Q},1)}{\Omega(E^{(d)}/\mathbb{Q})}$$ is also a 2-adic unit (notice that the N\'{e}ron period $\Omega(E/\mathbb{Q})$ is twice of the real period when $\Delta(E)>0$).  Since all the local Tamagawa numbers $c_\ell(E^{(d)})$ are odd (Lemma \ref{lem:tamagawa}) and $\Sha(E^{(d)}/\mathbb{Q})[2]=0$ (Lemma \ref{cor:disc}, (\ref{item:1}, \ref{item:2})), we know that BSD(2) is true for $E^{(d)}/\mathbb{Q}$. By the same argument, if $E^{(d_K)}/\mathbb{Q}$  has rank zero and $\chi_d(-N)=1$, we know that BSD(2) is true for $E^{(d\cdot d_K)}/\mathbb{Q}$.
\end{proof}

\subsection{Proof of Theorem \ref{thm:2partBSD} (\ref{item:2partQ})}
Now we can finish the proof of Theorem  \ref{thm:2partBSD} (\ref{item:2partQ}). Because the abelian surface $E\times E^{(d_K)}/\mathbb{Q}$ is isogenous to the Weil restriction $\Res_{K/\mathbb{Q}} E$ and the validity of the BSD conjecture for abelian varieties is invariant under isogeny (\cite[I.7.3]{Milne2006}), we know that BSD(2) for $E/\mathbb{Q}$ and $E^{(d_K)}/\mathbb{Q}$ implies that BSD(2) is true for $E/K$.  Hence by Theorem \ref{thm:2partBSD} (\ref{item:2partQ}),  BSD(2) is true for $E^{(d)}/K$. By Lemma \ref{lem:rankzero2part}, BSD(2) is true for the rank zero curve among $E^{(d)}/\mathbb{Q}$ and $E^{(d\cdot d_K)}/\mathbb{Q}$ for $d\in \mathcal{N}$ such that $\chi_d(-N)=1$. Then again by the invariance of BSD(2) under isogeny, we know BSD(2) is also true for the other rank one curve among $E^{(d)}/\mathbb{Q}$ and $E^{(d\cdot d_K)}/\mathbb{Q}$.

\section{Examples}\label{sec:examples}

In this section we illustrate our application to Goldfeld's conjecture and the 2-part of the BSD conjecture by providing examples of $E/\mathbb{Q}$ and $K$ which satisfy Assumption (\ref{eq:star}). 

Let us first consider curves $E/\mathbb{Q}$ of rank one.

\begin{example}
  Consider the curve $37a1$ in Cremona's table, $$E=37a1: y^2 + y = x^3 - x,$$ It is the rank one optimal curve over $\mathbb{Q}$ of smallest conductor ($N=37$).  Take $$K=\mathbb{Q}(\sqrt{-7}),$$ the imaginary quadratic field with smallest $|d_K|$ satisfying the Heegner hypothesis for $N$ such that 2 is split in $K$. The Heegner point $$P=(0,0)\in E(K)$$ generates $E(\mathbb{Q})=E(K)\cong \mathbb{Z}$. Since $E$ is optimal with Manin constant 1, we know that $\omega_E$ is equal to the N\'{e}ron differential. The formal logarithm associated to $\omega_E$ is $$\log_{\omega_E}(t)=t + 1/2\cdot t^4 - 2/5\cdot t^5 + 6/7\cdot t^7 - 3/2\cdot t^8 + 2/3\cdot t^9 +\cdots$$ We have $|\tilde E(\mathbb{F}_2)|=5$ and the point $5P=(1/4, -5/8)$ reduces to $\infty \in \tilde E(\mathbb{F}_2).$  Plugging in the parameter $t=-x(5P)/y(5P)=2/5$, we know that up to a 2-adic unit, $$\log_{\omega_E}P=\log_{\omega_E}5P=2 + 2^5 + 2^6 + 2^8 + 2^9 + \cdots\in 2 \mathbb{Z}_2^\times.$$ Hence $$\frac{|\tilde E(\mathbb{F}_2)|\cdot \log_{\omega_E}P}{2}\in \mathbb{Z}_2^\times$$ and (\ref{eq:star}) is satisfied. The set $\mathcal{N}$ consists of square-free products of the signed primes $$-11, 53, -71, -127, 149, 197, -211, -263, 337, -359, 373, -379, -443, -571, -599, 613,\cdots$$ For any $d\in \mathcal{N}$, we deduce:
  \begin{enumerate}
  \item The rank part of BSD conjecture is true for $E^{(d)}$ and $E^{(-7d)}$ by Theorem \ref{thm:overK}.
  \item Since $\Delta(E)>0$, we know from Corollary \ref{cor:disc} that
  \begin{equation*}
    \begin{cases}
          \rank E^{(d)}(\mathbb{Q})=1, \quad \rank E^{(-7d)}(\mathbb{Q})=0, &  d>0, \\
          \rank E^{(d)}(\mathbb{Q})=0, \quad \rank E^{(-7d)}(\mathbb{Q})=1, & d<0.
        \end{cases}
      \end{equation*}
  \item Since $\Gal(\mathbb{Q}(E[2])/\mathbb{Q}))\cong S_3$, it follows from Theorem \ref{thm:goldfeld} that $$N_r(E, X)\gg \frac{X}{\log^{5/6}X},\quad r=0,1.$$
  \item Since  BSD(2) is true for $E/\mathbb{Q}$ and $E^{(-7)}/\mathbb{Q}$ by numerical verification, it follows from Theorem~\ref{thm:2partBSD} that the BSD(2) is true for $E^{(d)}$ and $E^{(-7d)}$ when $d>0$.
  \end{enumerate}
\end{example}

\begin{example}\label{exa:rankone}
  As discussed in \S \ref{sec:proof-4}, a necessary condition for (\ref{eq:star}) is that the local Tamagawa numbers $c_p(E)$ are all odd for $p\ne2$. Another necessary condition is that the formal group of $E$ at 2 cannot be isomorphic to $\mathbb{G}_m$: this due to the usual subtlety that the logarithm on $\mathbb{G}_m$ sends $1+2 \mathbb{Z}_2$ into $4 \mathbb{Z}_2$ (rather than $2 \mathbb{Z}_2$).  We search for rank one optimal elliptic curves with $E(\mathbb{Q})[2]=0$ satisfying these two necessary conditions. There are 38 such curves of conductor $\le300$. For each curve, we choose $K$ with smallest $|d_K|$ satisfying the Heegner hypothesis for $N$ and such that 2 is split in $K$. Then 31 out of 38 curves satisfy (\ref{eq:star}). See Table~\ref{tab:1}. The first three columns list $E$, $d_K$ and the local Tamagawa number $c_2(E)$ at 2 respectively. A check-mark in the last column means that (\ref{eq:star}) holds, in which case Theorems \ref{thm:overK}, \ref{thm:goldfeld} apply and the improved bound towards Goldfeld's conjecture holds. If $c_2(E)$ is further odd (true for 23 out of 31), then the application to BSD(2) (Theorem \ref{thm:2partBSD}) also applies.
  
  \begin{table}[h]
    \centering\caption{Assumption (\ref{eq:star}) for rank one curves}\label{tab:1}
    \begin{tabular}[h]{*{4}{|>{$}c<{$}}|}
      E & d_K & c_2(E) & \bigstar\\\hline
37a1 & -7 & 1 & \checkmark\\
43a1 & -7 & 1 & \checkmark\\
88a1 & -7 & 4 & \checkmark\\
91a1 & -55 & 1 & \checkmark\\
91b1 & -55 & 1 & \checkmark\\
92b1 & -7 & 3 & \checkmark\\
101a1 & -23 & 1 & \checkmark\\
123a1 & -23 & 1 & \checkmark\\
123b1 & -23 & 1 & \checkmark\\
124a1 & -15 & 3 & \checkmark\\
131a1 & -23 & 1 & \checkmark\\
141a1 & -23 & 1 & \checkmark\\
141d1 & -23 & 1 & \checkmark\\
\end{tabular}
\begin{tabular}[h]{*{4}{|>{$}c<{$}}|}
  E & d_K & c_2(E) & \bigstar\\\hline
148a1 & -7 & 3 & \checkmark\\
152a1 & -15 & 4 & \checkmark\\
155a1 & -79 & 1 & \checkmark\\
155c1 & -79 & 1 & \checkmark\\
163a1 & -7 & 1 & \checkmark\\
172a1 & -7 & 3 & \checkmark\\
176c1 & -7 & 2 & \checkmark\\
184a1 & -7 & 2 & \checkmark\\
184b1 & -7 & 2 & \checkmark\\
189a1 & -47 & 1 & \checkmark\\
189b1 & -47 & 1 & \checkmark\\
196a1 & -31 & 3 & \checkmark\\
197a1 & -7 & 1 & \\
\end{tabular}
\begin{tabular}[h]{*{4}{|>{$}c<{$}}|}
  E & d_K & c_2(E) & \bigstar\\\hline
208a1 & -23 & 4 & \\
208b1 & -23 & 4 & \\
212a1 & -7 & 3 & \\
216a1 & -23 & 4 & \checkmark\\
219a1 & -23 & 1 & \checkmark\\
219b1 & -23 & 1 & \checkmark\\
232a1 & -7 & 2 & \\
236a1 & -23 & 3 & \\
243a1 & -23 & 1 & \checkmark\\
244a1 & -15 & 3 & \\
248a1 & -15 & 2 & \checkmark\\
248c1 & -15 & 2 & \checkmark\\
& & & 
\end{tabular}      
  \end{table}
\end{example}

\begin{remark}
There is one CM elliptic curve in Table \ref{tab:1}: namely $E=243a1$ with $j$-invariant 0, which seems to be only $j$-invariant of CM elliptic curves over $\mathbb{Q}$ for which (\ref{eq:star}) holds.
\end{remark}

Next let us consider curves $E/\mathbb{Q}$ of rank zero.

\begin{example}
  Consider $$E=X_0(11)=11a1: y^2 +y=x^3 -x^2 -10x-20,$$ the optimal elliptic curve over $\mathbb{Q}$ of smallest conductor ($N=11$). Take $$K=\mathbb{Q}(\sqrt{-7}),$$ the imaginary quadratic field with smallest $|d_K|$ satisfying the Heegner hypothesis for $N$ such that 2 is split in $K$. The Heegner point $$P=\left(-\frac{1}{2}\sqrt{-7}+\frac{1}{2},-2\sqrt{-7}-2\right)\in E(K)$$ generates the free part of $E(K)$. Since $E$ is optimal with Manin constant 1, we know that $\omega_E$ is equal to the N\'{e}ron differential. The formal logarithm associated to $\omega_E$ is $$\log_{\omega_E}(t)=t - 1/3\cdot t^3 + 1/2\cdot t^4 - 19/5\cdot t^5 - t^6 + 5/7\cdot t^7 - 27/2\cdot t^8 + 691/9\cdot t^9 + \cdots$$ We have $|\tilde E(\mathbb{F}_2)|=5$ and the point $5P=(-\frac{3}{4}, -\frac{11}{8}\sqrt{-7}-\frac{1}{2})$ reduces to $\infty \in \tilde E(\mathbb{F}_2).$  The prime 2 splits in $K$ as $$(2)= \left(-\frac{1}{2}\sqrt{-7}+\frac{1}{2}\right)\cdot \left(\frac{1}{2}\sqrt{-7}+\frac{1}{2}\right)$$ and the parameter $t=-x(5P)/y(5P)$ has valuation 1 for both primes above 2. Plugging in $t$, we find that $$\log_{\omega_E}P\in 2\mathcal{O}_{K_2}^\times.$$ Hence $$\frac{|\tilde E(\mathbb{F}_2)|\cdot \log_{\omega_E}P}{2}\in\mathcal{O}_{K_2}^\times$$ and (\ref{eq:star}) is satisfied. The set $\mathcal{N}$ consists of square-free products of the signed primes $$-23, 37, -67, -71, 113, 137, -179, -191, 317, -331, -379, 389, -443, 449, -463, -487, -631,\cdots$$ For any $d\in \mathcal{N}$, we deduce:
  \begin{enumerate}
  \item  The rank part of BSD conjecture is true for $E^{(d)}$ and $E^{(-7d)}$ by  Theorem \ref{thm:overK}.
  \item Since $\Delta(E)<0$, we know from Corollary \ref{cor:disc} that $$\rank E^{(d)}(\mathbb{Q})=0, \quad \rank E^{(-7d)}(\mathbb{Q})=1.$$
  \item Since $\Gal(\mathbb{Q}(E[2])/\mathbb{Q}))\cong S_3$, it follows from Theorem \ref{thm:goldfeld} that $$N_r(E, X)\gg \frac{X}{\log^{5/6}X},\quad r=0,1.$$
  \item   Since  BSD(2) is true for $E/\mathbb{Q}$ and $E^{(-7)}/\mathbb{Q}$ by numerical verification, it follows from Theorem~\ref{thm:2partBSD} that the BSD(2) is true for $E^{(d)}$ and $E^{(-7d)}$.
  \end{enumerate}
\end{example}

\begin{example}
  For rank zero curves, the computation of Heegner points is most feasible when $|d_K|$ is small. Thus we fix $d_K=-7$ and search for rank zero optimal curves with $E(\mathbb{Q})[2]=0$ satisfying the two necessary conditions in Example \ref{exa:rankone} and such that $K=\mathbb{Q}(\sqrt{-7})$ satisfies the Heegner hypothesis. There are 39 such curves of conductor $\le750$. See Table~\ref{tab:2}. Then 28 out of 39 curves satisfy (\ref{eq:star}), in which case Theorems \ref{thm:overK}, \ref{thm:goldfeld} apply and the improved bound towards Goldfeld's conjecture holds. If $c_2(E)$ is further odd (true for 24 out of 28), then the application to BSD(2) (Theorem \ref{thm:2partBSD}) also applies.

  \begin{table}[h]
    \centering    \caption{Assumption (\ref{eq:star}) for rank zero curves}    \label{tab:2}
    \begin{tabular}[h]{*{4}{|>{$}c<{$}}|}
  E & d_K & c_2(E) & \bigstar\\\hline      
11a1 & -7 & 1 & \checkmark\\
37b1 & -7 & 1 & \checkmark\\
44a1 & -7 & 3 & \checkmark\\
67a1 & -7 & 1 & \checkmark\\
92a1 & -7 & 3 & \checkmark\\
116a1 & -7 & 3 & \\
116b1 & -7 & 3 & \\
176a1 & -7 & 1 & \checkmark\\
176b1 & -7 & 1 & \checkmark\\
179a1 & -7 & 1 & \checkmark\\
184d1 & -7 & 2 & \checkmark\\
232b1 & -7 & 2 & \\
268a1 & -7 & 1 & \checkmark\\
\end{tabular}
\begin{tabular}[h]{*{4}{|>{$}c<{$}}|}
  E & d_K & c_2(E) & \bigstar\\\hline  
316a1 & -7 & 1 & \\
352a1 & -7 & 2 & \checkmark\\
352e1 & -7 & 2 & \checkmark\\
368c1 & -7 & 1 & \checkmark\\
368f1 & -7 & 1 & \checkmark\\
428a1 & -7 & 3 & \\
464c1 & -7 & 2 & \\
464d1 & -7 & 1 & \\
464f1 & -7 & 1 & \\
464g1 & -7 & 2 & \\
557b1 & -7 & 1 & \checkmark\\
568a1 & -7 & 1 & \\
571a1 & -7 & 1 & \\
\end{tabular}
\begin{tabular}[h]{*{4}{|>{$}c<{$}}|}
  E & d_K & c_2(E) & \bigstar\\\hline  
592b1 & -7 & 1 & \checkmark\\
592c1 & -7 & 1 & \checkmark\\
659b1 & -7 & 1 & \checkmark\\
688b1 & -7 & 2 & \checkmark\\
701a1 & -7 & 1 & \checkmark\\
704c1 & -7 & 1 & \checkmark\\
704d1 & -7 & 1 & \checkmark\\
704e1 & -7 & 1 & \checkmark\\
704f1 & -7 & 1 & \checkmark\\
704g1 & -7 & 1 & \checkmark\\
704h1 & -7 & 1 & \checkmark\\
704i1 & -7 & 1 & \checkmark\\
739a1 & -7 & 1 & \checkmark\\
    \end{tabular}
  \end{table}
\end{example}

\begin{remark}\label{rem:assumptionstar}
  Even when $E$ does not satisfy  (\ref{eq:star}) for any $K$ (e.g., when $E(\mathbb{Q})$ has rank $\ge2$ or $\Sha(E/\mathbb{Q})[2]$ is nontrivial), one can still prove the same bound in Theorem \ref{thm:goldfeld} by exhibiting \emph{one} quadratic twist $E^*$ of $E$ such that $E^*$ satisfies (\ref{eq:star}) (as quadratic twisting can \emph{lower} the 2-Selmer rank). We expect that one can always find such $E^*$ when the two necessary conditions ($c_p(E)$'s are odd for $p\ne2$ and $a_2(E)$ is even) are satisfied, and so we expect that Theorem \ref{thm:goldfeld} applies to a large positive proportion of elliptic curves $E$. Showing the existence of such $E^*$ amounts to showing that the value of the anticyclotomic $p$-adic $L$-function at the trivial character is nonvanishing mod $p$ among quadratic twists families for $p=2$. This nonvanishing mod $p$ result seems to be more difficult and we do not address it here (but when $p\ge5$ see Prasanna \cite{Prasanna2010} and the forthcoming work of Burungale--Hida--Tian).
\end{remark}

\bibliographystyle{alpha}
\bibliography{Congruence}

\end{document}